\newtheorem{thm}{Theorem}[section]
\newtheorem{teo}[thm]{Proposition}
\newtheorem{lem}[thm]{Lemma}
\newtheorem{defn}[thm]{Definition}
\newtheorem{condition}[thm]{Condition}
\numberwithin{equation}{section}
\newtheoremstyle{named}{}{}{\itshape}{}{\bfseries}{.}{.5em}{#3}
\theoremstyle{named}
\newcommand{\activationenergy}{\Gamma^{\text{PCA}}}
\newcommand{\activationenergym}{\Gamma_m}
\newcommand{\stab}{x_0}
\newcommand{\metauno}{x_1}
\newcommand{\metadue}{x_2}
\newcommand{\rr}[1]{{\normalfont\textrm{#1}}}
\newcommand{\newatop}[2]{\genfrac{}{}{0pt}{}{#1}{#2}}
\begin{document}
 \title{Effect of energy degeneracy on the transition time \\ for a series of metastable states: \\ application to Probabilistic Cellular Automata}

\author[
         {}\hspace{0.5pt}\protect\hyperlink{hyp:email1}{1},\protect\hyperlink{hyp:affil1}{a}
        ]
        {\protect\hypertarget{hyp:author1}{Gianmarco Bet}}

\author[
         {}\hspace{0.5pt}\protect\hyperlink{hyp:email2}{2},\protect\hyperlink{hyp:affil1}{a},\protect\hyperlink{hyp:corresponding}{$\dagger$}
        ]
        {\protect\hypertarget{hyp:author2}{Vanessa Jacquier}}

\author[
         {}\hspace{0.5pt}\protect\hyperlink{hyp:email3}{3},\protect\hyperlink{hyp:affil1}{a},\protect\hyperlink{hyp:affil2}{b}
        ]
        {\protect\hypertarget{hyp:author3}{Francesca R.~Nardi}}

\affil[ ]{
          \small\parbox{365pt}{
             \parbox{5pt}{\textsuperscript{\protect\hypertarget{hyp:affil2}{a}}}Università degli Studi di Firenze,
            \enspace
             \parbox{5pt}{\textsuperscript{\protect\hypertarget{hyp:affil1}{b}}}Eindhoven University of Technology
            }
          }

\affil[ ]{
          \small\parbox{365pt}{
             \parbox{5pt}{\textsuperscript{\protect\hypertarget{hyp:email1}{1}}}\texttt{\footnotesize\href{mailto:gianmarco.bet@unifi.it}{gianmarco.bet@unifi.it}},
             \parbox{5pt}{\textsuperscript{\protect\hypertarget{hyp:email2}{2}}}\texttt{\footnotesize\href{mailto:vanessa.jacquier@unifi.it}{vanessa.jacquier@unifi.it}},
             \parbox{5pt}{\textsuperscript{\protect\hypertarget{hyp:email3}{3}}}\texttt{\footnotesize\href{mailto:francescaromana.nardi@unifi.it}{francescaromana.nardi@unifi.it}}
            }
          }

\affil[ ]{
          \small\parbox{365pt}{
             \parbox{5pt}{\textsuperscript{\protect\hypertarget{hyp:corresponding}{$\dagger$}}}Corresponding author
            }
          }

\date{\today}

\maketitle

\begin{center}
 {\it To our friend and colleague Carlo Casolo}

\end{center}

\begin{abstract}

 We consider the problem of metastability for stochastic reversible dynamics with exponentially small transition probabilities. We generalize previous results in several directions. We give an estimate of the spectral gap of the transition matrix and of the mixing time of the associated dynamics in terms of the maximal stability level. These \textit{model-independent results} hold in particular for a large class of Probabilistic Cellular Automata (PCA), which we then focus on. We consider the PCA in a finite volume, at small and fixed magnetic field, and in the limit of vanishing temperature. This model is peculiar because of the presence of three metastable states, two of which are degenerate with respect to their energy. We identify rigorously the metastable states by giving explicit upper bounds on the stability level of every other configuration. We rely on these estimates to prove a recurrence property of the dynamics, which is a cornerstone of the \textit{pathwise approach} to metastability. Further, we also identify the metastable states according to the \textit{potential-theoretic approach} to metastability, and this allows us to give precise asymptotics for the expected transition time from any such metastable state to the stable state.


\medskip\noindent
 \emph{Keywords:} Stochastic dynamics, probabilistic cellular automata, metastability, potential theory, low temperature dynamics, mixing times. \\
\emph{MSC2020:}
60K35, 82C20, \emph{secondary}: 60J10, 60J45, 82C22, 82C26.
\\
 \emph{Acknowledgment:} The research of Francesca R.~Nardi was partially supported by the NWO Gravitation Grant 024.002.003--NETWORKS and by the PRIN Grant 20155PAWZB ``Large Scale Random Structures''.

\end{abstract}

\maketitle

\section{Introduction}

 Metastability is a phenomenon that occurs when a physical system is close to a first order phase transition. Among classical examples are super-saturated vapors and ferromagnetic materials in a hysteresis loop \cite{penrose1971rigorous}.
 The metastability phenomenon occurs only for some thermodynamical parameters when a system is trapped for a long time in a state different from the stable state. This is the so-called \emph{metastable state}. While the system is trapped, it behaves as if it was in equilibrium, except that at a certain time it makes a sudden transition from the metastable state to the stable state. Metastability occurs in several physical situations and this has led to the formulation of numerous models for metastable behavior. However, in each case, three interesting issues are typically investigated. The first is the study of the \emph{transition time} from any metastable state to any stable states. The fluctuations of the dynamics should facilitate the transition, but these are very unlikely, so the system is typically stuck in the metastable state for an exponentially long time. The second issue is the identification of certain configurations, the so-called \emph{critical configurations}, that trigger the transition. The system fluctuates in a neighborhood of the metastable state until it visits the set of critical configurations during the last excursion. After this, the system relaxes to equilibrium. The third and last issue is the study of the typical path that the system follows during the transition from the metastable state to the stable state, the so-called \emph{tube of typical trajectories}. This issue is especially interesting from a physics point of view.

 The goal of this paper is twofold. First we consider general dynamics with exponentially small transition probabilities and we give an estimate of the mixing time and of the spectral gap of the transition matrix in terms of the maximal stability level. Second, we focus on a specific Probabilistic Cellular Automata in a finite volume, at small and fixed magnetic field, in the limit of vanishing temperature and we prove some results describing the metastable behaviour of the system.

 Let us now discuss the two goals in detail, starting with a comparison between our estimates for the mixing time and the spectral gap and the literature on the topic.
 Similar results on the estimate of the spectral gap have been proved for the model of simulated annealing in \cite{holley1988simulated}. The authors use Sobolev inequalities to study the simulated annealing algorithm and they demonstrate that this approach gives detailed information about the rate at which the process is tending to its ground state. Thanks to this result the mixing time is estimated for Metropolis dynamics. Our model-independent theorems are a generalization of the result in \cite[Proposition 3.24]{nardi2016hitting} to reversible dynamics with exponentially small transition probabilities in finite volume.
 The analysis of the spectral gap between the zero eigenvalue and the next-smallest eigenvalue of the generator is very interesting for Markov processes, since it is useful to control convergence to equilibrium. In \cite{bovier2002metastability} the authors focus on the connection between metastability and spectral theory for the so-called \emph{generic Markov chains} under the assumption of non-degeneracy. In particular, they use spectral information to derive sharp estimates on the transition times. We refer also to \cite[Chapter 8 and 16]{bovier2016metastability}, where the authors incorporate all the previous results about the study of metastability through spectral data. In particular, they show that the spectrum of the generator decomposes into a cluster of very small real eigenvalues that are separated by a gap from the rest of the spectrum. In order to study our PCA, we extend their estimates of the spectral gap to the case of degenerate in energy metastable states. The states $\sigma$ and $\eta$ are degenerate metastable states if they have the same energy and the energy barrier between them is smaller then the energy barrier between a metastable state and the stable state (see Condition \ref{t:series00} for a precise formulation and see \cite[Chapter 16.5 point 3]{bovier2016metastability} for a discussion). To suit our purposes, we express these estimates as functions of the virtual energy instead of the Hamiltonian function, see Equation \eqref{Hamgeneral} for the specific definition and \cite{catoni1999simulated}, \cite{cirillo2015metastability}.

 Regarding the expected transition time, in \cite{cirillo2017sum} the authors consider series of two metastable states with decreasing energy in the framework of reversible finite state space Markov chains with exponentially small transition probabilities. Under certain assumptions, not only they find the (exponential) order of magnitude of the transition time from the first metastable state to the stable state, they also give an addition rule to compute the prefactor. We generalize their results on the mean transition time and their addition rule to a setting with several degenerate metastable states, see Section \ref{s:series} for details. 

 The second goal concerns a particular Probabilistic Cellular Automata (PCA). Cellular Automata (CA) are discrete–time dynamical systems on a spatially extended discrete space and are used in a wide range of applications, for example to model natural and social phenomena. Probabilistic Cellular Automata (PCA) are the stochastic version of Cellular Automata, where the updating rules are random, i.e., the configurations are chosen according to probability distributions determined by the neighborhood of each site. Mathematically, we consider PCA with parallel (synchronous) dynamics, i.e., systems of finite-states Markov chains whose distribution at time $n$ depends only on the states in a neighboring set at time $n-1$. PCA are characterized by a matrix of transition probabilities from any configuration $\sigma$ to any other configuration $\eta$ defined as a product of local transition probabilities as
\begin{equation*}
\begin{split}
     p(\sigma,\eta) :=\prod_{i\in\Lambda}p_{i,\sigma}(\eta(i)), \qquad \sigma, \eta \in \mathcal{X},
\end{split}
\end{equation*}
 where $\Lambda \subset \mathbb{Z}^2$ is a finite box with periodic boundary conditions and $\mathcal{X}=\{-1,+1\}^{\Lambda}$ is the set of all configurations. Here we consider a specific PCA in the class introduced by Derrida \cite{derrida1989dynamical}, where the local transition probability is a certain function of the sum of neighboring spins $S_{\sigma}(\cdot)$ \eqref{ESSE} and the external magnetic field $h$
\begin{equation*}
\begin{split}
 p_{i,\sigma}(a):=\frac{1}{1+\exp{\{-2\beta a(S_{\sigma}(i) +h)}\}}= \frac{1}{2}[1 +a \tanh\beta(S_{\sigma}(i) +h)].
\end{split}
\end{equation*}
 We obtain our PCA by summing only over the nearest neighbor sites, see \eqref{KAPPA} and Figure \ref{neighbors}.
 When the sum is carried out over a symmetric set, the resulting dynamics is reversible with respect to a suitable Gibbs–like measure $\mu$ defined via a translation invariant multi–body potential, see \eqref{hamiltonianFunction}.
 This measure depends on a parameter $\beta$ which can be thought of as the inverse of the temperature of the system. For small values of the temperature, the PCA is likely to be found in the local minima of the Hamiltonian associated to $\mu$. The metastable behavior of this model has been investigated on heuristic and numerical grounds in  \cite{bigelis1999critical}.
 A key quantity in the study of metastability is the \emph{energy barrier} from one of the metastable states to the stable state. This is the minimum, over all paths connecting the metastable to the stable state, of the maximal transition energy along each path, minus the energy of the starting configuration (see \eqref{Phimax}-\eqref{Phimin}). Intuitively, the energy barrier from $\eta$ to $\sigma$ is the energy that the system must overcome to reach $\eta$ starting from $\sigma$.

 For our choice of parameters, our PCA has one stable state $\underline{+1}$ and peculiarly \emph{three} metastable states, which we identify rigorously as $\{\underline{-1},\underline{c}^e, \underline{c}^o\}$. To prove this, we will construct for each configuration $\sigma \notin \{\underline{-1},\underline{c}^e, \underline{c}^o, \underline{+1} \}$ a path starting from $\sigma$ and ending in a lower energy state, such that the maximal energy, along the path, is lower than the \emph{energy barrier from $\underline{-1}$ to $\underline{+1}$}.  This leads to an explicit upper-bound $V^*$ for the stability level of every configuration except $\{\underline{-1},\underline{c}^e,\underline{c}^o,\underline{+1}\}$, in Lemma \ref{EST}, which we will refer to as our main technical tool.
 We rely on this estimate to prove two recurrence properties. The first is that, starting from any configuration, the system reaches the set $\{\underline{-1},\underline{c}^e, \underline{c}^o,\underline{+1}\}$ in a time smaller than $e^{\beta V^*}$ with probability exponentially close to one. The second is that starting from any configuration the system reaches $\underline{+1}$ in a time smaller than $e^{\beta \activationenergy}$. To prove this, we combine our main tool with the computation of the energy barrier $\activationenergy$ in \cite{cirillo2003metastability} to prove the second recurrence property. We remark that $\underline{c}^e$ and $\underline{c}^o$ are two degenerate metastable states, since they have the same energy and the energy barrier between them is zero. Hence, we will use the shorthand $\underline{c}=\{\underline{c}^e,\underline{c}^o\}$.

 In order to find sharp estimates of the transition time from $\underline{-1}$ to $\underline{+1}$, we extend in Section \ref{s:series}, and then verify, the three model-dependent conditions given in \cite{cirillo2017sum}. These are, respectively, our main technical tool, the property that starting from $\underline{-1}$ the system visits the chessboard $\underline{c}$ before reaching $\underline{+1}$ with high probability \cite{cirillo2003metastability}, and the computation of the constants $k_1$ and $k_2$ in \cite{cirillo2016sum}. In fact, the sharp estimates on the transition time which we give here were already stated in \cite{cirillo2016sum}, but the proof there missed some key steps, which we provide here. First, our Lemma \ref{EST} was assumed to hold without proof and the generalization given in theorems \ref{t:ptaset}, \ref{t:meant}, \ref{t:addition01}, \ref{t:addition03}, \ref{t:addition02} were not done explicitly. To prove these last statements, we use model-independent theorems discussed earlier and model-dependent inputs such as the energy barrier.

 Regarding the model-dependent results, \cite{cirillo2003metastability} focuses on the transition from the metastable states to the stable state. In particular, the authors describe the tube of typical trajectories and they also estimate the transition time. To do this, they analyze the geometrical conditions for the shrinking or the growing of a cluster. Furthermore, they characterize the local minima of the energy and the so-called \emph{traps} for the PCA dynamics. Building on this, we construct a specific path from any cluster to the stable state that the system follows with probability tending to one. Our estimates of the stability levels in Lemma \ref{EST} are based on these characterizations.

 The authors in \cite{cirillo2008metastability} consider a reversible PCA model with self-interactions. In particular they prove the recurrence to the set $\{\underline{-1},\underline{+1}\}$ and that $\underline{-1}$ is the unique metastable state. They estimate the transition time in probability, in $L^1$ and in law. Moreover, they characterize the critical droplet that is visited by the system with probability tending to one during its excursion from the metastable to the stable state. Furthermore, in \cite{nardi2012sharp} they prove sharp estimates for expected transition time by computing the prefactor explicitly.

 \paragraph{State of the art.} A first mathematical description of metastability \cite{penrose1971rigorous} was inspired by Gibbsian Equilibrium Statistical Mechanics and was based on the computation of the expected values with respect to restricted equilibrium states. The first dynamical approach, known as pathwise approach, was initiated in \cite{cassandro1984metastable} and developed in \cite{olivieri1995markov, olivieri1996markov, scoppola1994metastability}, see also \cite{olivieri2005large}. This approach derives large deviation estimates of the first hitting time and of the tube of typical trajectories. It is based on the notions of cycles and cycle paths and it hinges on a detailed knowledge of the energy landscape. Independently, similar results based on a graphical definition of cycles were derived in \cite{catoni1997exit, catoni1999simulated} and applied to reversible Metropolis dynamics and to simulated annealing in \cite{catoni1992parallel, trouve1996rough}. The pathwise approach was further developed in \cite{manzo2004essential, cirillo2013relaxation, cirillo2015metastability} to disentangle the study of transition time from the one of typical trajectories. This method was applied in \cite{arous1996metastability, cirillo1998metastability, cirillo1996metastability, hollander2000metastability, den2003droplet, gaudilliere2005nucleation, kotecky1994shapes, nardi1996low, neves1991critical, neves1992behavior, olivieri2005large} for Metropolis dynamics and in \cite{cirillo2003metastability, cirillo2008metastability, cirillo2008competitive} for parallel dynamics.

 The \emph{potential-theoretical approach} is based on the study of the hitting time through the use of the Dirichlet form and spectral properties of the transition matrix. One of the advantages of this method is that it provides an estimate of the expected value of the transition time including the prefactor, by exploiting a detailed knowledge of the critical configurations, see \cite{bovier2004metastability, bovier2016metastability}. This method was applied in \cite{bashiri2017note, boviermanzo2002metastability, cirillo2017sum, bovier2006sharp, den2012metastability} for Metropolis dynamics and in \cite{nardi2012sharp} for parallel dynamics.

 Recently other approaches are described in \cite{beltran2010tunneling, beltran2012tunneling, gaudillierelandim2014} and in \cite{bianchi2016metastable}.

 The more involved infinite volume limit, at low temperature or vanishing magnetic field, was studied for Metropolis dynamics via large deviation techniques in \cite{cerf2013nucleation, dehghanpour1997metropolis, manzo1998relaxation, manzo2001dynamical, schonmann1994slow, schonmann1998wulff} and via the potential-theoretical approach in \cite{bovier2010homogeneous, gaudilliere2009ideal, gaudilliere2010upper, hollander2000metastability, gaudilliere2020asymptotic}.

 \paragraph{Outline.} The paper is organized as follows, in Section \ref{mir} we define a general setup and we present the main model-independent results with some applications to concrete models. In Section \ref{mdr} we describe the reversible PCA model that we consider and we present the main model-dependent results. In Section \ref{pmir} we carry out the proof of the model-independent results, and in Section \ref{pmdr} we carry out the proof of the model-dependent results. Finally in Appendix \ref{appendixA} we recall some results and give explicit computation that are used in the paper, and in Appendix \ref{AppendixB} we prove theorems stated in Section \ref{s:series}.

\section{Model-independent results}\label{mir}

\subsection{General setup and definitions}\label{gs}
 Let $\mathcal{X}$ be a finite set, which we refer to as \emph{state space}, and let $\Delta:\mathcal{X} \times \mathcal{X} \longrightarrow \mathbb{R}^+ \cup \{ \infty \}$ be a function, which we call \emph{rate function}. $\Delta$ is said to be \emph{irreducible} if for every $x,y \in \mathcal{X}$ there exist a path $\omega=(\omega_1,...,\omega_n) \in \mathcal{X}^n$ with $\omega_1=x$, $\omega_n=y$ and $\Delta(\omega_i,\omega_{i+1}) < \infty$ for every $1 \leq i \leq n-1$, where $n$ is a positive integer. A family of time-homogeneous Markov chains $(X_n)_{n \in \mathbb{N}}$ on $\mathcal{X}$ with transition probabilities $\mathcal{P}_\beta$ indexed by a positive parameter $\beta$ is said to have \emph{rare transitions with rate function $\Delta$} when
\begin{equation}\label{Deltageneral1}
     \lim_{\beta\to\infty} -\frac{\log \mathcal{P}_\beta(x,y)}{\beta}=:\Delta(x,y),
\end{equation}
 for any $x,y \in \mathcal{X}$. Intuitively, $\Delta(x,y)= + \infty$ should be understood as the fact that, when $\beta$ is large, there is no possible transition between states $x$ and $y$. We also note that condition \eqref{Deltageneral1} is sometimes written more explicitly as \cite[Equation (2.2)]{cirillo2015metastability}: for any $\gamma>0$, there exists $\beta_0>0$ such that
\begin{equation}\label{Deltageneral}
 e^{- \beta [\Delta(x,y)+\gamma]} \leq \mathcal{P}_{\beta}(x,y) \leq e^{- \beta [\Delta(x,y)-\gamma]},
\end{equation}
 for any $\beta>\beta_0$ and any $x,y \in \mathcal{X}$, where the parameter $\gamma$ is a function of $\beta$ that vanishes for $\beta\to\infty$. Because of this, we also refer to the function $\Delta(x,y)$ as the \emph{energy cost} of the transition from $x$ to $y$.

 We assume that the Markov chain $(X_n)_n$ satisfies the detailed balance property
\begin{equation}\label{detailedbalance}
 \mathcal{P}_\beta(x,y)\,e^{-\beta G(x)}=\mathcal{P}_\beta(y,x)\,e^{-\beta G(y)},
\end{equation}
 for any $x,y \in \mathcal{X}$, where $G: \mathcal{X} \longrightarrow \mathbb{R}$ is the so-called \emph{Hamiltonian function}. Equivalently, the Markov chain is reversible with respect to the Gibbs measure
\begin{equation}\label{GBMgeneral}
     \mu(x):=\frac{e^{-\beta G(x)}}{\sum_{y\in \mathcal{X}}e^{-\beta G(y)}}.
\end{equation}
 This implies that the measure $\mu$ is stationary, that is $\sum_{x\in \mathcal{X}}\mu(x)\mathcal{P}_\beta(x,y)=\mu(y)$. Next, we define the \emph{virtual energy} as
\begin{equation}\label{Hamgeneral}
H(x):=\lim_{\beta\rightarrow\infty}G(x).
\end{equation}
 Definition \eqref{Hamgeneral} is well-posed, since for large $\beta$, the Markov chain $(X_n)_{n}$ is irreducible and its invariant probability distribution $\mu$ in \eqref{GBMgeneral} is such that for any $x \in \mathcal{X}$ the limit $\lim_{\beta \to\infty}-\frac{1}{\beta}\log\mu(x)$ exists and is a positive real number \cite[Prop. 2.1]{cirillo2015metastability}. Taking the limit $\beta\to\infty$ in \eqref{detailedbalance} yields
\begin{equation}\label{proprev}
    H(x)+\Delta(x,y)=H(y)+\Delta(y,x).
\end{equation}
This motivates the following definition of \emph{transition energy}
\begin{equation}\label{transitionenergy}
    H(x,y):=H(x)+\Delta (x, y),
\end{equation}
 where $x,y$ are configurations in $\mathcal{X}$. The definition of transition energy is needed to define the height along a path $\omega$ in the general setting. Indeed, there may not exist a configuration whose energy is equal to the energy of the maximum along the path. The transition energy between two configurations is defined as the sum between the virtual energy of the first configuration and the energy cost of the transition between the two configurations. This is unlike the Metropolis dynamics case \cite{nardi2016hitting}, where the transition energy between two configurations is the virtual energy of some state along the path between the two.

 Let $\omega=\{\omega_1,...,\omega_n\}$ be a finite sequence of configurations. We call $\omega$ a path with starting configuration $\omega_1$ and final configuration $\omega_n$. We denote the length of $\omega$ as $|\omega|=n$. We define the height along $\omega$ as $\Phi_\omega=H(\omega_1)$ if $|\omega|= 1$, or if $|\omega|>1$
\begin{equation}\label{Phimax}
    \Phi_\omega:=\max_{i=1,...,|\omega|-1} H(\omega_i,\omega_{i+1}).
\end{equation}
 Let $x,y\in \mathcal{X}$ be two configurations. The \emph{communication height} between two configurations $x$, $y$ is defined as
\begin{equation}\label{Phimin}
\Phi(x,y):=\min_{\omega\in\Theta(x,y)}\Phi_w,
\end{equation}
 where $\Theta(x,y)$ the set of all the paths $\omega$ starting from $x$ and ending in $y$. Similarly, we also define the communication height between two sets $A, B \subset \mathcal{X}$ as
\begin{equation}
\Phi(A,B):=\min_{x \in A,y \in B} \Phi(x,y).
\end{equation}
\begin{figure}[!hbt]
\begin{center}
\begin{tikzpicture} [line width=1pt]
\path [red] (0,0) edge (1,0.6);
\path [red] (1,0.6) edge (2,0);
\path [red] (2,0) edge (3,0.6);
\path [red] (3,0.6) edge (4,0);
\path (1,0.6) edge (2,1.2);
\path (2,1.2) edge (3,0.6);
\path (0,0) edge (1,-0.6);
\path (1,-0.6) edge (2,0);
\path (2,0) edge (3,-0.6);
\path (3,-0.6) edge (4,0);
\path (1,-0.6) edge (2,-1.2);
\path (2,-1.2) edge (3,-0.6);

\node at (0,0.2) {$x$};
\node at (4,0.2) {$y$};
\fill (0,0) circle(2pt);
\fill [red] (1,0.6) circle(2pt);
\fill [red] (2,0) circle(2pt);
\fill [red] (3,0.6) circle(2pt);
\fill (4,0) circle(2pt);
\fill (2,1.2) circle(2pt);
\fill (2,-1.2) circle(2pt);
\fill (1,-0.6) circle(2pt);
\fill (3,-0.6) circle(2pt);

 \end{tikzpicture}
   \end{center}
   \caption{Example of a path $\omega$ between $x$ and $y$ with $|\omega|=5$.}
    \end{figure}
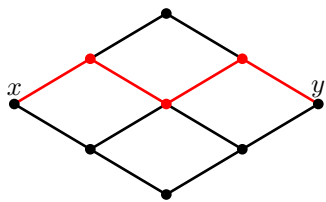
\begin{figure}[!hbt]
   \centering
    \includegraphics[scale=0.8]{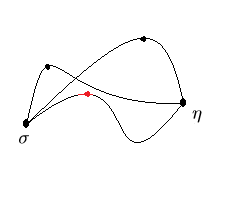}
    \label{fig:cammini}
       \caption{There are three paths in $\Theta(x,y)$. The red mark represents the communication height between $x$ and $y$. }
\end{figure}

 The \emph{first hitting time} of $A\subset \mathcal{X}$ starting from $x \in \mathcal{X}$ is defined as
\begin{equation}\label{fht}
    \tau^x_A:=\inf\{t>0 \,|\,X_t\in A\}.
\end{equation}
 Whenever possible we shall drop from the notation the superscript denoting the starting point. For any $x\in \mathcal{X}$, let $\mathcal{I}_x$ be the set of configurations with energy strictly lower than $H(x)$, i.e.,
\begin{equation}\label{I}
\mathcal{I}_x:=\{y\in \mathcal{X} \,|\, H(y)<H(x)\}.
\end{equation}
 The \emph{stability level} $V_{x}$ of $x$ is the  energy barrier that, starting from $x$, must be overcome to reach the set $\mathcal{I}_x$, i.e.,
\begin{equation}\label{stablev}
V_{x}:=\Phi(x,\mathcal{I}_x)-H(x).
\end{equation}
 If $\mathcal{I}_x$ is empty, then we let $V_x=\infty$. We denote by $\mathcal{X}^s$ the set of global minima of the energy, and we refer to these as ground states. The metastable states are those states that attain the maximal stability level $\activationenergym< \infty$, that is
\begin{align}\label{Gammam}
     & \activationenergym:=\max_{x\in \mathcal{X}\setminus \mathcal{X}^s}V_{x}, \\
    & \mathcal{X}^m:=\{y\in \mathcal{X}| \, V_{y}=\activationenergym \}.
\end{align}
 Since the metastable states are defined in terms of their stability level, a crucial role in our proofs is played by the set of all configurations with stability level strictly greater than $V$, that is
\begin{equation}\label{Xv}
\mathcal{X}_V:=\{x\in \mathcal{X} \,\, | \,\, V_{x}>V\}.
\end{equation}
 We frame the problem of metastability as the identification of metastable states and the computation of transition times from the metastable states to the stable configurations.
 In summary, from the mathematical point of view, the metastability phenomenon for a given system is described in terms of $\mathcal{X}^s$, $\activationenergym$ and $\mathcal{X}^m$. Now we define formally the \emph{energy barrier} $\Gamma$ as
\begin{equation}
\Gamma:=\Phi(y_m,y_s)-H(y_m),
\end{equation}
 where $y_m\in \mathcal{X}^m$ and $y_s\in \mathcal{X}^s$. Note that $\Gamma$ does not depend on the specific choice of $y_m, y_s$. The energy barrier is the minimum energy necessary to trigger the nucleation. The energy $\Gamma$ turns out to be equal to $\activationenergym$ under specific assumptions \cite[Theorem 2.4]{cirillo2013relaxation}.

 A different notion of metastable states is given in \cite{bovier2002metastability}, within the framework of the potential-theoretic approach. The \textit{Dirichlet form} associated with our reversible Markov chain is the functional
\begin{equation}\label{dirichlet}
\mathscr{D}_\beta[f]
:=
\frac{1}{2}\sum_{y,z\in \mathcal{X}}
 \mu_\beta(y)p_\beta(y,z) [f(y)-f(z)]^2,
\end{equation}
 where $f:\mathcal{X}\to \mathbb{R}$ is a function. Thus, given two not empty disjoint sets $Y,Z\subset \mathcal{X}$ the \emph{capacity} of the pair $Y$ and $Z$ defined as
\begin{equation}
\label{capac}
\rr{cap}_\beta(Y,Z)
:=
\min_{\newatop{f:\mathcal{X}\to[0,1]}{f\vert_Y=1,f\vert_Z=0}}
\mathscr{D}_\beta[f].
\end{equation}
 Note that the capacity is a \emph{symmetric} function of the sets $Y$ and $Z$. It can be proven that the right hand side of (\ref{capac}) has a unique minimizer called \emph{equilibrium potential} of the pair $Y$ and $Z$. There is a nice interpretation of the equilibrium potential in terms of hitting times. For any $x \in \mathcal{X}$, we denote by $\mathbb{P}_x(\cdot)$ and $\mathbb{E}_x[\cdot]$ respectively the probability and the
 average along the trajectories of the process started at $x$. Then, it can be proven that the \emph{equilibrium potential} of the pair $Y$ and $Z$ is equal to the function $h_{Y,Z}$ defined as follows
\begin{equation}
\label{eqpot}
h_{Y,Z}(x):=
\left\{
\begin{array}{ll}
 \mathbb{P}_x(\tau_Y<\tau_Z) & \;\;\textrm{ for } x\in \mathcal{X}\setminus(Y\cup Z)\\
1& \;\;\textrm{ for } x\in Y\\
0& \;\;\textrm{ for } x\in Z\\
\end{array}
\right.
\end{equation}
 where $\tau_Y$ and $\tau_Z$ are, respectively, the first hitting time to $Y$ and $Z$ for the chain started at $x$. It can be also proven that, for any $Y\subset \mathcal{X}$ and $z\in \mathcal{X}\setminus Y$,
\begin{equation}
\label{cap-prop}
\rr{cap}_\beta(z,Y)=\mu_\beta(z)\mathbb{P}_z(\tau_Y<\tau_z),
\end{equation}
see \cite[equation (7.1.16)]{bovier2016metastability}.
\begin{defn}
\label{pta}
 According to the potential-theoretic approach, a set $M\subset \mathcal{X}$ is said to be \emph{metastable} if
\begin{equation}
\label{metadef}
\lim_{\beta\to\infty}
\frac{\max_{x\notin{M}}\mu_\beta(x){[\rr{cap}_\beta(x,M)]}^{-1}}
      {\min_{x\in{M}}\mu_\beta(x){[\rr{cap}_\beta(x,M\setminus\{x\})]}^{-1}}
=0.
\end{equation}
\end{defn}
 In order to avoid confusion, we will denote the states that satisfy \eqref{metadef} as \emph{p.t.a.-metastable}. The physical meaning of the above definition can be understood once one remarks that the quantity $\mu_\beta(x)/\textrm{cap}_\beta(x,y)$, for any $x,y\in \mathcal{X}$, is strictly related to the communication cost between the states $x$ and $y$, see Proposition~\ref{t:apriori} for details. Thus, condition \eqref{metadef} ensures that the communication cost between any state outside $M$ and $M$ itself is smaller than the communication cost between any two states in $M$.

\subsection{Main model-independent results}
 The following theorems give estimates of the mixing time and the spectral gap in the general setting.
\begin{thm}\label{GM}
 Let $(P_\beta(x,y))_{x,y\in\mathcal{X}}$ be the transition matrix of a Markov chain. Assume there exists at least a stable state $s$ such that
\begin{equation}\label{P0}
\lim_{\beta \to \infty}- \frac{1}{\beta}\log \mathcal{P}_{\beta}(s,s)=0.
\end{equation}
Then, for any $0<\epsilon<1$ we have
\begin{equation}\label{lim2}
 \lim_{\beta \rightarrow \infty}{\frac{1}{\beta}\log{ t^{mix}_\beta(\epsilon)}}=\activationenergym,
\end{equation}
 where $t^{mix}_{\beta}:=\min\{n \geq 0 \, | \, \max_{x\in\mathcal{X}}||\mathcal{P}^n_\beta(x,\, \cdot \,)-\mu (\, \cdot \,)||_{TV}\leq \epsilon\}$ and $||\nu-\nu'||_{TV}=\frac{1}{2}\sum_{x\in\mathcal{X}}{|\nu(x)-\nu'(x)|}$ for every $\nu,\nu'$ probability distribution on $\mathcal{X}$.
\end{thm}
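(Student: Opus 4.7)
The plan is to establish matching upper and lower bounds on $\frac{1}{\beta}\log t^{mix}_\beta(\epsilon)$, both converging to $\activationenergym$ as $\beta\to\infty$. The two directions are driven by qualitatively different mechanisms: the lower bound exploits the slowness of exit from a deepest metastable state, while the upper bound combines global recurrence to $\mathcal{X}^s$ with a mixing step that uses \eqref{P0} as its only quantitative ingredient.

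For the lower bound, I would pick $x^*\in\mathcal{X}^m$, so that $V_{x^*}=\activationenergym$, and use it as initial condition to force slow mixing. By the definition \eqref{stablev} of the stability level together with standard large deviation tail bounds for exit times in rare transition dynamics, one expects
\begin{equation*}
\mathbb{P}_{x^*}\bigl(\tau_{\mathcal{I}_{x^*}} \leq e^{\beta(\activationenergym-\gamma)}\bigr)\leq e^{-\beta\gamma'}
\end{equation*}
for some $\gamma'>0$ and all $\beta$ large. Since $x^*\notin\mathcal{X}^s$ we have $\mathcal{I}_{x^*}\supseteq\mathcal{X}^s$, and by \eqref{GBMgeneral}--\eqref{Hamgeneral} the invariant measure concentrates on $\mathcal{X}^s$, so $\mu(\mathcal{I}_{x^*})\to 1$. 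For $n\leq e^{\beta(\activationenergym-\gamma)}$ this gives
\begin{equation*}
\|\mathcal{P}^n_\beta(x^*,\cdot)-\mu\|_{TV}\geq \mu(\mathcal{I}_{x^*})-\mathcal{P}^n_\beta(x^*,\mathcal{I}_{x^*})\longrightarrow 1,
\end{equation*}
whence $t^{mix}_\beta(\epsilon)\geq e^{\beta(\activationenergym-\gamma)}$ for every $0<\epsilon<1$ and all $\beta$ large, proving $\liminf_\beta \tfrac{1}{\beta}\log t^{mix}_\beta(\epsilon)\geq \activationenergym$ upon sending $\gamma\downarrow 0$.

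For the upper bound the first ingredient is a global recurrence property: for every $x\in\mathcal{X}$,
\begin{equation*}
\mathbb{P}_x\bigl(\tau_{\mathcal{X}^s}>e^{\beta(\activationenergym+\gamma)}\bigr)\longrightarrow 0.
\end{equation*}
This is obtained by iterating the single-step bound $\mathbb{P}_x(\tau_{\mathcal{I}_x}>e^{\beta(V_x+\gamma/2)})\to 0$, which applies since $V_x\leq\activationenergym$ for $x\notin\mathcal{X}^s$: finiteness of $\mathcal{X}$ and strict decrease of the virtual energy at each successive hit of some $\mathcal{I}_{\cdot}$ force $\mathcal{X}^s$ to be reached within at most $|\mathcal{X}|$ such excursions, and a union bound closes the estimate. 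The second ingredient is the post-recurrence mixing. I would run two chains, one started at $x$ and one at $\mu$, and show that both reach $\mathcal{X}^s$ by time $e^{\beta(\activationenergym+\gamma)}$ with probability $\geq 1-\epsilon/2$. Condition \eqref{P0} guarantees that $\mathcal{P}_\beta(s,s)$ is bounded below uniformly in $\beta$ up to sub-exponential factors, which lets the two chains first be synchronized at $s\in\mathcal{X}^s$ and then be coupled with probability bounded away from $0$ at each subsequent step; after sub-exponentially many extra steps the coupling succeeds with probability $\geq 1-\epsilon/2$, yielding $t^{mix}_\beta(\epsilon)\leq e^{\beta(\activationenergym+\gamma)}$.

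The main obstacle is precisely this post-recurrence mixing: hitting $\mathcal{X}^s$ is not enough on its own, and \eqref{P0} must be invoked quantitatively to rule out near-periodic behaviour that could re-introduce an exponential delay. A cleaner alternative, which I would consider running in parallel, is to first prove a spectral gap estimate $\rho_\beta\geq e^{-\beta(\activationenergym+\gamma)}$ and then invoke the classical reversible bound $t^{mix}_\beta(\epsilon)\leq \rho_\beta^{-1}\bigl(\log\epsilon^{-1}+\tfrac{1}{2}\log(\min_x\mu(x))^{-1}\bigr)$; since $\log(\min_x\mu(x))^{-1}$ is only linear in $\beta$, it is absorbed into the $e^{\beta\gamma}$ slack and the desired bound follows without any coupling. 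Either route concludes $\limsup_\beta \tfrac{1}{\beta}\log t^{mix}_\beta(\epsilon)\leq \activationenergym$, and combined with the lower bound this gives the claim.
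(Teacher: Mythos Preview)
Your route is genuinely different from the paper's. Rather than arguing upper and lower bounds directly, the paper reduces everything to the cycle framework of Catoni: it proves the identity $\activationenergym=\tilde\Gamma(\mathcal{X}\setminus\{s\})$ (the maximal depth among cycles avoiding a fixed ground state) by a case analysis over $\mathcal{X}_0$ versus $\mathcal{X}\setminus\mathcal{X}_0$, identifies this with Catoni's critical depth $H_2$, and then invokes \cite[Theorem~5.1]{catoni1999simulated} applied to the two--component product chain on $\mathcal{X}\times\mathcal{X}$, which under assumption~\eqref{P0} yields $H_2=H_3$ and hence the mixing--time asymptotic~\eqref{lim2} in one stroke. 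Your direct probabilistic argument is more elementary when it works, while the paper's approach buys a clean black--box treatment of the delicate post--recurrence step.

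That step is exactly where your upper--bound sketch has a real gap. In the spectral route, the bound you quote requires the \emph{absolute} spectral gap $1-\max(|a^{(2)}_\beta|,|a^{(|\mathcal{X}|)}_\beta|)$; Theorem~\ref{GMM} controls only $1-a^{(2)}_\beta$, and \eqref{P0} at a single state $s$ does not by itself push $a^{(|\mathcal{X}|)}_\beta$ away from $-1$ (picture a chain that is lazy at $s$ but nearly period--two on the rest of the space, as indeed happens for stable pairs in the PCA). In the coupling route, ``let chain~1 wait at $s$ until chain~2 arrives'' needs $\mathcal{P}_\beta(s,s)^{T}$ bounded below for $T$ of order $e^{\beta(\activationenergym+\gamma)}$, whereas \eqref{P0} only gives $\mathcal{P}_\beta(s,s)\ge e^{-\beta o(1)}$, so that product collapses. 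The honest repair is precisely the product--chain/diagonal--hitting argument underlying Catoni's $H_2=H_3$; once you formalize your coupling this way, your proof and the paper's coincide. Your lower bound, by contrast, is correct as written.
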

\begin{thm}\label{GMM}
 Let $(P_\beta(x,y))_{x,y\in\mathcal{X}}$ be a reversible transition matrix. Let $\rho_{\beta}=1-a^{(2)}_{\beta}$ be the spectral gap, with $a^{(2)}_\beta$ is the second eigenvalue of the transition matrix such $1=a^{(1)}_\beta>a^{(2)}_\beta\geq...\geq a^{(|\mathcal{X}|)}_\beta\geq -1$. Then there exist two constants $0<c_1<c_2<\infty$ independent of $\beta$ such that for every $\beta>0$,
\begin{equation}\label{rocompreso}
 c_1e^{-\beta(\activationenergym+\gamma_1)} \leq \rho_{\beta} \leq c_2e^{-\beta(\activationenergym-\gamma_2)},
\end{equation}
 where $\gamma_1,\gamma_2$ are functions of $\beta$ that vanish for $\beta\to\infty$.
\end{thm}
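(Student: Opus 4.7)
\textbf{Proof plan for Theorem \ref{GMM}.}

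The plan is to reduce the two-sided bound on $\rho_\beta$ to the mixing-time estimate of Theorem~\ref{GM} via the standard Levin--Peres--Wilmer inequalities relating the relaxation time $t_{\rm rel}(\beta) := 1/\rho_\beta$ to the mixing time for reversible chains:
\begin{equation*}
\bigl(t_{\rm rel}(\beta)-1\bigr)\log\!\bigl(\tfrac{1}{2\epsilon}\bigr)
\;\le\;
t^{mix}_\beta(\epsilon)
\;\le\;
t_{\rm rel}(\beta)\,\log\!\Bigl(\tfrac{1}{\epsilon\,\mu^{\min}_\beta}\Bigr),
\end{equation*}
where $\mu^{\min}_\beta := \min_{x\in\mathcal{X}}\mu_\beta(x)$. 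Fix once and for all an arbitrary $\epsilon\in(0,1)$, say $\epsilon=1/4$. Then the two inequalities respectively furnish an upper bound on $t_{\rm rel}(\beta)$ (hence a lower bound on $\rho_\beta$) and a lower bound on $t_{\rm rel}(\beta)$ (hence an upper bound on $\rho_\beta$), provided we can (i) quote quantitative versions of Theorem~\ref{GM} and (ii) control $\mu^{\min}_\beta$.

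For the lower bound on $\rho_\beta$, the left inequality gives $t_{\rm rel}(\beta)\le 1 + t^{mix}_\beta(\epsilon)/\log(1/(2\epsilon))$. By Theorem~\ref{GM}, for every $\gamma>0$ there exists $\beta_0$ such that $t^{mix}_\beta(\epsilon)\le e^{\beta(\activationenergym+\gamma)}$ for $\beta\ge\beta_0$, so that $\rho_\beta \ge c_1\,e^{-\beta(\activationenergym+\gamma_1(\beta))}$ with $\gamma_1(\beta)\to 0$. For the upper bound on $\rho_\beta$, I use the right inequality: from \eqref{GBMgeneral} and the finiteness of $\mathcal{X}$ one has $\mu^{\min}_\beta \ge |\mathcal{X}|^{-1}e^{-\beta(\max_x H(x)-\min_y H(y)+o(1))}$, so that $\log(1/(\epsilon\,\mu^{\min}_\beta))=O(\beta)$. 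Combined with the lower bound $t^{mix}_\beta(\epsilon)\ge e^{\beta(\activationenergym-\gamma)}$ from Theorem~\ref{GM}, this gives $t_{\rm rel}(\beta)\ge (C\beta)^{-1}\,e^{\beta(\activationenergym-\gamma)}$, and absorbing the polynomial factor $\beta$ into the exponential (which is allowed because the statement only requires $\gamma_2\to 0$) yields $\rho_\beta\le c_2\,e^{-\beta(\activationenergym-\gamma_2(\beta))}$.

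The main obstacle is threefold. First, the polynomial (in $\beta$) prefactor coming from $\log(1/\mu^{\min}_\beta)$ is precisely what forces the correction $\gamma_2(\beta)$ in the statement; one must check that it can indeed be absorbed into a vanishing exponent while keeping $c_1<c_2$ independent of $\beta$. Second, Theorem~\ref{GM} is stated under the loop-probability condition \eqref{P0}, which is not explicitly imposed in Theorem~\ref{GMM}; so either one restricts to that setting (which is natural in our PCA application, since the standard Metropolis-type or PCA dynamics satisfy it via the detailed balance \eqref{detailedbalance} together with the existence of a global energy minimizer), or one replaces the reduction by a direct Dirichlet-form argument: the upper bound on $\rho_\beta$ then follows from \eqref{capac} by inserting the test function $h_{x^m,\mathcal{X}^s}$ for some $x^m\in\mathcal{X}^m$ and using $\mathscr D_\beta[h_{x^m,\mathcal{X}^s}]\le \mu_\beta(x^m)\,e^{-\beta(\activationenergym - o(1))}$ and $\mathrm{Var}_\mu(h_{x^m,\mathcal{X}^s})\sim \mu_\beta(x^m)$, while the lower bound is obtained from the recurrence estimate to $\mathcal{I}_x$ implied by $V_x\le \activationenergym$, exactly as in the proof of Theorem~\ref{GM}. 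Third, one must verify that the limits in Theorem~\ref{GM} can be made effective (with explicit constants) uniformly in the small parameter $\epsilon=1/4$; this is routine once \eqref{Deltageneral} is used in place of \eqref{Deltageneral1}.
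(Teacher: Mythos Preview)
Your primary route has a genuine gap in the direction that gives the \emph{upper} bound on $\rho_\beta$. The Levin--Peres--Wilmer inequality
\[
t^{mix}_\beta(\epsilon)\;\le\; t_{\rm rel}(\beta)\,\log\!\Bigl(\tfrac{1}{\epsilon\,\mu^{\min}_\beta}\Bigr)
\]
holds with $t_{\rm rel}$ defined via the \emph{absolute} spectral gap $\gamma_*:=1-\max_{i\ge 2}|a_\beta^{(i)}|$, not via $\rho_\beta=1-a_\beta^{(2)}$. Since $\gamma_*\le \rho_\beta$, a lower bound on $t^{mix}_\beta$ only yields $\gamma_*\le C\beta\,e^{-\beta(\activationenergym-\gamma)}$, which says nothing about $\rho_\beta$. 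This is not a technicality here: for non-lazy parallel dynamics such as the PCA of Section~\ref{model}, eigenvalues close to $-1$ are a genuine possibility (for instance the pair $\underline c^e,\underline c^o$ flips with probability close to one), and you have no argument ruling them out. Your lower bound on $\rho_\beta$ via the other inequality is fine (that one really does hold with $1-a_\beta^{(2)}$), but it inherits the extra hypothesis \eqref{P0} from Theorem~\ref{GM}, which Theorem~\ref{GMM} does not assume.

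The paper avoids both issues by working directly with the variational formula $\rho_\beta=\inf_f \mathscr D_\beta[f]/\mathrm{Var}_\beta(f)$. For the upper bound it plugs in the indicator of the valley $R_{\mathcal X\setminus\{x_0\}}(y_0)$ with $x_0\in\mathcal X^m$, $y_0\in\mathcal X^s$, and bounds numerator and denominator separately using \eqref{Deltageneral} and \eqref{Hamgeneral}; your suggested equilibrium-potential test function $h_{x^m,\mathcal X^s}$ would also work and is essentially the same idea. For the lower bound the paper uses a canonical-paths (Poincar\'e) argument: for each pair $x,y$ choose a path $\omega\in\Theta(x,y)$, telescope $f(y)-f(x)$ along $\omega$, apply Cauchy--Schwarz, and bound the resulting edge weights by $e^{\beta\activationenergym}$ using $\Phi(x,y)-H(x)-H(y)\le\activationenergym$ (after normalizing $\min H=0$). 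Your one-line fallback ``the lower bound is obtained from the recurrence estimate to $\mathcal I_x$'' does not correspond to this; recurrence controls hitting probabilities, not the Dirichlet form, and you would still need the path-counting step to close the argument.
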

%

\subsection{Results for some concrete models}\label{examples}
 In this section we show that several well-known models in statistical mechanics satisfy the assumption \eqref{P0} of Theorem \ref{GM}. In particular we are able to get precise asymptotics for the mixing time of these models. 
 Throughout this section we denote by $\Lambda$ a finite subset of $\mathbb{Z}^2$, by $\mathcal{X}$ the configuration space and by $s$ a stable state.

 \paragraph{Metropolis algorithm.} The Hamiltonian function for this model coincides with the virtual energy and is given by
\begin{equation}
 H(\sigma):=-\frac{J}{2}\sum_{\substack{i,j \in \Lambda\\ |i-j|=1}} \sigma (i) \sigma (j) -\frac{h}{2} \sum_{i \in \Lambda} \sigma (i), \qquad \sigma \in \mathcal{X}.
\end{equation}
The transition probabilities are given by
\begin{equation}
 \mathcal{P}_\beta(\sigma,\eta):=q(\sigma,\eta) \exp\{-\beta[H(\eta)-H(\sigma)]\}, \qquad \sigma, \eta \in \mathcal{X},
\end{equation}
 where
\begin{equation*}
q(\sigma, \eta):=
\bigg \{
\begin{array}{rl}
\frac{1}{|\Lambda|} & \text{if } \exists i\in \Lambda: \sigma^i=\eta, \\
0 & \text{otherwise}. \\
\end{array}
\end{equation*}
 and
\begin{equation*}
\sigma^i(j):=
\bigg \{
\begin{array}{rl}
\sigma(j) & \text{if} \qquad j \neq i, \\
-\sigma(j) & \text{if} \qquad j = i. \\
\end{array}
\end{equation*}
 In this case the assumption \eqref{P0} is shown to hold in \cite[Prop. 3.24]{cirillo2015metastability}. Note that Kawasaki dynamics is a type of Metropolis dynamics, so it falls into this case.

 \paragraph{Reversible PCA model for Spin Systems.}\label{PCAmodel} For this model, the Hamiltonian function is given by
\begin{equation}\label{hamiltonianFunction}
 G(\sigma):=-h\sum_{i\in\Lambda}\sigma(i)-\frac{1}{\beta}\sum_{i\in\Lambda}\log\cosh [\beta(S_{\sigma}(i)+h)],
\end{equation}
and the virtual energy is obtained by \eqref{Hamgeneral}
\begin{equation}\label{Ham}
 H(\sigma)= -h\sum_{i\in\Lambda}\sigma(i)-\sum_{i\in\Lambda}|S_{\sigma}(i)+h|.
\end{equation}
 Here
\begin{equation}\label{ESSE}
S_{\sigma}(i):=\sum_{j\in U_i}K(i-j)\sigma(j),
\end{equation}
 where $K(i-j) \ne 0$ for $j \in U_i$ a neighborhood of $i$. Different choices of $K(\cdot)$ and $U_i$ yield different PCA. It can be shown that, if $U_i$ is symmetric, then the Markov chain is reversible.
The transition probabilities are given by
\begin{equation}\label{eqmark}
\begin{split}
     p(\sigma,\eta) :=\prod_{i\in\Lambda}p_{i,\sigma}(\eta(i)), \qquad \sigma, \eta \in \mathcal{X},
\end{split}
\end{equation}
 where, for $i\in\Lambda$ and $\sigma \in \mathcal{X}$, $p_{i,\sigma}(\cdot)$ is the probability measure on $\{-1,+1\}$ defined as
\begin{equation}\label{eqprob}
\begin{split}
 p_{i,\sigma}(a):=\frac{1}{1+\exp{\{-2\beta a(S_{\sigma}(i) +h)}\}}= \frac{1}{2}[1 +a \tanh\beta(S_{\sigma}(i) +h)],
\end{split}
\end{equation}
with $a \in \{-1,+1\}$. We have
\begin{align}
     \lim_{\beta \rightarrow \infty} - \frac{1}{\beta}\log p(s,s) &= \lim_{\beta \rightarrow \infty} - \frac{1}{\beta}\log \prod_{i \in \Lambda} \frac{1}{1+\exp{\{-2\beta s(i)(S_{s}(i) +h)}\}}  \notag\\
    & = \lim_{\beta \rightarrow \infty} \sum_{i \in \Lambda}\log((1+\exp{\{ -2\beta s(i)(S_{s}(i) +h)\}})^{\frac{1}{\beta}})  \notag\\
    & \le \lim_{\beta \rightarrow \infty}\sum_{i \in \Lambda}\log\Big( 1+\frac{1}{\beta}\exp{\{ -2\beta s(i)(S_{s}(i) +h)\}} \Big),
\end{align}
 where we used the inequality $(1+x)^{\alpha} \le 1+\alpha x$ with $\alpha \in (0,1)$. In this model the unique stable state is $s=\underline{+1}$, so we conclude in the following way
\begin{align}
 \lim_{\beta \rightarrow \infty}\sum_{i \in \Lambda}\log\Big( 1+\frac{1}{\beta}\exp{\{ -2\beta (S_{s}(i) +h)\}} \Big)
 & = \lim_{\beta \rightarrow \infty}\sum_{i \in \Lambda}\log\Big(\ 1+\frac{1}{\beta}\exp{\{ -2\beta (|U_i|+h)\}} \Big) \notag\\
 & = \lim_{\beta \rightarrow \infty} |\Lambda| \log\Big(\ 1+\frac{1}{\beta}\exp{\{ -2\beta (|U_i|+h)\}} \Big) \notag \\
&=0,
\end{align}
 where in the last equality we used that $h \ge 0$ and $|U_i|$ is the same for all $ i \in \Lambda$.

 \paragraph{Irreversible PCA model.} The Hamiltonian function of the Irreversible PCA model is given by
\begin{equation}
 G(\sigma,\tau):=-\sum\limits_{k\in \Lambda^2_{N}} [\sigma_k(\tau_{k^u}+\tau_{k^r})+h \sigma_k \tau_k],\qquad \sigma, \tau\in\mathcal X,
\end{equation}
 with $k^u:=(i,j+1)$, $k^r:=(i+1,j)$ for $k=(i,j)\in \Lambda^2_N$. The transition probabilities are given by
\begin{equation}
 \mathcal{P}_{\beta}(\sigma,\eta):=\frac{e^{-\beta G(\sigma,\eta)}}{\sum\limits_{\tau\in\mathcal{X}} e^{-\beta G(\sigma,\tau)}}.
\end{equation}
 Note that the subset $\mathcal{X} \setminus \mathcal{X}^s$ is not empty since $G$ is not constant. We compute
\begin{align}
      \lim_{\beta \rightarrow \infty} - \frac{1}{\beta}\log \mathcal{P}_{\beta}(s,s) & = \lim_{\beta \rightarrow \infty} - \frac{1}{\beta}\log \Big(\frac{e^{-\beta G(s,s)}}{\sum\limits_{\tau\in\mathcal{X}} e^{-\beta G(s,\tau)}} \Big) \notag \\
     & = H(s,s)+\lim_{\beta \rightarrow \infty} \frac{1}{\beta}\log\Big(\sum\limits_{\tau\in\mathcal{X}} e^{-\beta G(s,\tau)}\Big).
\end{align}
 Take $\overline{\tau}\in\mathcal{X}$ such that \( \displaystyle G(s,\overline{\tau})=\min_\tau G(s,\tau) \). We get
\begin{align}
  H(s,s)+\lim_{\beta \rightarrow \infty} \frac{1}{\beta}\log\Big(\sum\limits_{\tau\in\mathcal{X}} e^{-\beta G(s,\tau)}\Big) & \leq H(s,s)+\lim_{\beta \rightarrow \infty} \frac{1}{\beta}\log \Big(2^{N^2} e^{- \beta G(s,\overline{\tau})}\Big) \notag \\
 & =H(s,s)-H(s,\overline{\tau})+\lim_{\beta \rightarrow \infty} \frac{1}{\beta}\log(2^{N^2}).
\end{align}
 The last term goes to zero since $N$ is finite. Since in this model $s=\underline{+1}$, we have
\begin{align*}
     H(\underline{+1},\underline{+1})=-N^4(2+h), \qquad H(\underline{+1},\overline{\tau})=-N^4(2+h)
\end{align*}
and the conclusion follows.
\subsection{Series of metastable states}
\label{s:series}
\par\noindent
 The structure of the energy landscape that we analyze for our reversible PCA model in Section \ref{model} is such that the system has three metastable states with one non-degenerate-in-energy metastable state and two degenerate metastable states. Moreover, the system started at the metastable state with higher energy, must necessarily visit the second one before relaxing to the stable state. In this Section we generalize the results in \cite[Section 2.5, 2.6]{cirillo2017sum} to this degenerate context. In particular, we shall prove the addition rule for the exit times from the metastable states.

\begin{condition}
\label{t:series00}
 We assume that the energy landscape $(\mathcal{X},Q,H,\Delta)$ is such that there exist four or more states $\stab$, $\metauno^1, \metauno^2,..., \metauno^n$ and $\metadue$ such that $\mathcal{X}^s=\{\stab\}$, $\mathcal{X}^m=\{\metauno^1,...,\metauno^n,\metadue\}$, and $H(\metadue)>H(\metauno^r)$, $H(\metauno^r)=H(\metauno^q)$, $\Phi(\metauno^r,\metauno^q)-H(\metauno^r)<\activationenergym$ for every $r,q=1,...,n$, with $n \in \mathbb{N}$.
\end{condition}

 Recalling the definition of the set of ground states $\mathcal{X}^s$, we immediately have
\begin{equation}
\label{msl0-1}
H(\metauno^r)>H(x_0) \qquad \text{for every } r=1,...,n.
\end{equation}
 Moreover, from the definition \eqref{stablev} of maximal stability level it follows that (see \cite[Theorem~2.3]{cirillo2013relaxation}) the communication cost from $\metadue$ to $\stab$ is equal to the communication cost from $\metauno^r$ to $\stab$ for every $r=1,...,n$, that is
\begin{equation}
\label{msl00}
\Phi(x_2,x_0)-H(x_2)=\Phi(\metauno^r,\stab)-H(\metauno^r)=
\Gamma_m.
\end{equation}
 Note that, since $x_2$ is a metastable state, its stability level cannot be lower than $\Gamma_m$. Then, recalling that $H(x_2)>H(\metauno^r)$ for every $r=1,...,n$, one has that
 $\Phi(x_2,\metauno^r)-H(x_2)\ge\Gamma_m$. On the other hand, \eqref{msl00} implies that there exists a path $\omega\in\Theta(x_2,x_1^r)$ such that $\Phi_\omega=H(x_2)+\Gamma_m$ and, hence,
 $\Phi(x_2,x_1^r)-H(x_2)\le\Gamma_m$ for every $r=1,...,n$. The two bounds finally imply that
\begin{equation}
\label{series01new}
\Phi(x_2,\metauno^r)-H(x_2)=\Gamma_m.
\end{equation}
 Note that the communication cost from $\stab$ to $\metadue$ and that from $\metauno^r$ to $\metadue$ are larger than $\Gamma_m$, i.e.,
\begin{equation}
\label{indietro}
\Phi(\stab,\metadue)-H(\stab)
\Gamma_m
\;\;\;\textrm{ and }\;\;\;
\Phi(\metauno^r,\metadue)-H(\metauno^r)
\Gamma_m, \qquad \text{ for every } r=1,...,n.
\end{equation}
Indeed, recalling the reversibility property \eqref{proprev}, we have
\begin{eqnarray*}
\Phi(\metauno^r,\metadue)-H(\metauno^r)
&=&
\Phi(\metadue,\metauno^r)
-H(\metadue)
+H(\metadue)-H(\metauno^r)\\
&=&
\Gamma_m
+H(\metadue)-H(\metauno^r)
\Gamma_m.
\end{eqnarray*}
where in the last two steps we have used
 \eqref{series01new} and Condition~\ref{t:series00}, which proves the second of the two equations \eqref{indietro}. The first of them can be proved similarly.
 When the system is started at $x_2$, with high probability it will visit $x_1^r$ before $x_0$ for every $r=1,...,n$. For this reason we shall assume the following condition.
\begin{condition}
\label{t:series01}
Condition~\ref{t:series00} is satisfied and
\begin{equation}
\label{series01}
 \lim_{\beta\to\infty}\mathbb{P}_{\metadue}(\tau_{\stab}<\tau_{\metauno^r})=0, \qquad \text{for every } r=1,...,n.
\end{equation}
\end{condition}
 We remark that the Condition~\ref{t:series01} is in fact a condition on the equilibrium potential $h_{x_0,\metauno^r}$ evaluated at $x_2$, for every $r=1,...,n$.

 One of important goals of this paper is to prove an additional rule for the mean hitting time of $\underline{+1}$ starting at $\underline{-1}$ using Theorem \ref{t:addition02} for the expectation of the transition time $\tau_{\stab}$ for the chain started at $\metadue$. Such an expectation, hence, will be of order $\exp(\beta\Gamma_m)$ and the prefactor will be that given in \eqref{addition02}.

 We can thus formulate the further assumptions that we shall need in the sequel.

\begin{condition}
\label{t:series02}
 Condition~\ref{t:series00} is satisfied and there exists two positive constants $k_1,k_2<\infty$ and such that
\begin{align}
\label{series02}
 & \frac{\mu_\beta(\metadue)}{\rr{cap}_\beta(\metadue,\{\metauno^1,...,\metauno^n,\stab\})}
=
\frac{1}{k_1}
e^{\beta\Gamma_\rr{m}}[1+o(1)],,\,\,\,\,\,
 & \frac{\mu_\beta(\{\metauno^1,...,\metauno^n\})}{\rr{cap}_\beta(\{\metauno^1,...,\metauno^n\},\stab)}
=
\frac{1}{k_2}
e^{\beta\Gamma_\rr{m}}[1+o(1)],
\end{align}
where $o(1)$ denotes a function tending to zero in the limit
$\beta\to\infty$.
\end{condition}

\begin{condition}
\label{t:series03}
Condition~\ref{t:series00} is satisfied and
there exists $n$ positive constants $c_1, c_2,..., c_n<\infty$ such that
\begin{equation}
\label{series03}
\frac{\mu_\beta(x_1^r)}{\rr{cap}_\beta(\metauno^r,\stab)}
=
\frac{1}{c_i}
e^{\beta\Gamma_\rr{m}}[1+o(1)], \qquad \text{for every } r=1,...,n,
\end{equation}
where $o(1)$ denotes a function tending to zero in the limit
$\beta\to\infty$.
\end{condition}

 The following theorems generalize respectively Theorem 1, Theorem 2, Theorem 3, Theorem 4 in \cite{cirillo2017sum}. We prove them in Appendix B.

\begin{thm}\label{t:ptaset}
 Assume Condition~\ref{t:series00} is satisfied. Then for every $r=1,...,n$ we have $\{x_0,\metauno^r,x_2\} \subset \mathcal{X}$ is a p.t.a.-metastable set.
\end{thm}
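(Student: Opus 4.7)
The plan is to verify Definition~\ref{pta} of p.t.a.-metastability directly for $M=\{\stab,\metauno^r,\metadue\}$, leaning on the standard a priori estimate (Proposition~\ref{t:apriori}) which asserts that for any $x\in\mathcal{X}$ and any $Y\subset\mathcal{X}$ with $x\notin Y$,
\begin{equation*}
    \frac{\mu_\beta(x)}{\rr{cap}_\beta(x,Y)} = e^{\beta[\Phi(x,Y)-H(x)]+o(\beta)}.
\end{equation*}
Consequently, up to sub-exponential corrections, the ratio in \eqref{metadef} is controlled by the exponent $\max_{x\notin M}[\Phi(x,M)-H(x)]-\min_{x\in M}[\Phi(x,M\setminus\{x\})-H(x)]$, and it suffices to show that this quantity is strictly negative.

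First I would compute the denominator exponent. For $x=\metauno^r$, \eqref{msl00} gives $\Phi(\metauno^r,\stab)-H(\metauno^r)=\activationenergym$, while the reversibility \eqref{proprev} combined with \eqref{series01new} and $H(\metadue)>H(\metauno^r)$ yields $\Phi(\metauno^r,\metadue)-H(\metauno^r)=\activationenergym+H(\metadue)-H(\metauno^r)>\activationenergym$, so the minimum over the two choices is $\activationenergym$. For $x=\metadue$, both \eqref{msl00} and \eqref{series01new} give $\activationenergym$. For $x=\stab$, reversibility together with \eqref{msl00} and \eqref{series01new} yields $\Phi(\stab,\metauno^r)-H(\stab)=\activationenergym+H(\metauno^r)-H(\stab)>\activationenergym$ by \eqref{msl0-1}, and analogously $\Phi(\stab,\metadue)-H(\stab)>\activationenergym$. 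Hence
\begin{equation*}
    \min_{x\in M}\big[\Phi(x,M\setminus\{x\})-H(x)\big]=\activationenergym.
\end{equation*}

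Next I would show the strict upper bound $\Phi(x,M)-H(x)<\activationenergym$ for every $x\notin M$, which by finiteness of $\mathcal{X}$ automatically produces a uniform gap $\delta>0$. I would split into two cases. If $x=\metauno^q$ with $q\ne r$, Condition~\ref{t:series00} directly gives $\Phi(\metauno^q,\metauno^r)-H(\metauno^q)<\activationenergym$, hence $\Phi(x,M)-H(x)<\activationenergym$. Otherwise $x\notin\mathcal{X}^m\cup\mathcal{X}^s$, so by \eqref{Gammam} the stability level satisfies $V_x<\activationenergym$ strictly. Iterating the definition of $V$, I construct a descent chain $x=y_0,y_1,\ldots,y_N=\stab\in M$ where $y_{k+1}\in\mathcal{I}_{y_k}$ is reached along a path of height at most $H(y_k)+V_{y_k}$. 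The height of the concatenated path equals $\max_k[H(y_k)+V_{y_k}]$; the first segment contributes $H(x)+V_x<H(x)+\activationenergym$, while for $k\ge 1$ one has $H(y_k)<H(x)$ and $V_{y_k}\le\activationenergym$, so every subsequent segment lies strictly below $H(x)+\activationenergym$ as well.

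Combining the two estimates,
\begin{equation*}
 \frac{\max_{x\notin M}\mu_\beta(x)[\rr{cap}_\beta(x,M)]^{-1}}
      {\min_{x\in M}\mu_\beta(x)[\rr{cap}_\beta(x,M\setminus\{x\})]^{-1}}
 \le e^{-\beta\delta+o(\beta)}\longrightarrow 0\quad\text{as }\beta\to\infty,
\end{equation*}
which is exactly \eqref{metadef}. The main obstacle is the strict inequality for the numerator: one must treat uniformly both the other degenerate metastable states $\metauno^q$ with $q\ne r$ (handled by Condition~\ref{t:series00}) and arbitrary configurations outside $\mathcal{X}^m\cup\mathcal{X}^s$ (handled by the iterated-descent argument). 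Everything else reduces to direct applications of \eqref{msl00}, \eqref{series01new} and the reversibility \eqref{proprev}.
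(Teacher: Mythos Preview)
Your proof is correct and self-contained, but it takes a different route from the paper. The paper's proof is a one-line invocation of \cite[Theorem~3.6]{cirillo2013relaxation}: under the equivalence relation $x\sim y$ iff both $\Phi(x,y)-H(x)<\activationenergym$ and $\Phi(y,x)-H(y)<\activationenergym$, Condition~\ref{t:series00} forces $\metauno^1,\dots,\metauno^n$ to lie in one equivalence class while $\stab$ and $\metadue$ are each alone, and the cited theorem then asserts that any choice of one representative per class yields a p.t.a.-metastable set.

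What you do instead is verify Definition~\ref{pta} directly via Proposition~\ref{t:apriori}, computing the denominator exponent to be exactly $\activationenergym$ and bounding the numerator exponent strictly below $\activationenergym$ by an iterated-descent argument. This is essentially reproving, in the present setting, the content of the external theorem the paper invokes. Your approach has the advantage of being self-contained (no black-box dependence on \cite{cirillo2013relaxation}), and it makes transparent exactly where the hypothesis $\Phi(\metauno^r,\metauno^q)-H(\metauno^r)<\activationenergym$ is used, namely to handle the numerator when $x=\metauno^q$ with $q\neq r$. The paper's approach is shorter but hides the mechanism inside the cited result. Both are valid; the iterated-descent construction you give (terminating at $\stab$ with every segment at height strictly below $H(x)+\activationenergym$) is the standard argument and is carried out correctly.
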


\begin{thm}\label{t:meant}
Assume Condition~\ref{t:series00} is satisfied. Then
\begin{align}
&\mathbb{E}_{\metadue}[\tau_{\{\metauno^1,...,\metauno^n,\stab\}}]\!=\!
\frac{\mu_\beta(\metadue)}
      {\rr{cap}_\beta(\metadue,\{\metauno^1,...,\metauno^n,\stab\})}[1+o(1)],
\label{Egen1}\\
&\mathbb{E}_{\{\metauno^1,...,\metauno^n\}}[\tau_{\stab}]\!=\!
\frac{\mu_\beta(\{\metauno^1,...,\metauno^n\})}
      {\rr{cap}_\beta(\{\metauno^1,...,\metauno^n\},\stab)}[1+o(1)],
\label{Egen2} \\
&\mathbb{E}_{\metauno^r}[\tau_{\stab}]\!=\!
 \frac{n\mu_\beta(\metauno^r)}{\rr{cap}_\beta(\metauno^r,\stab)}[1+o(1)], \qquad \text{for every } r=1,...,n.
\label{valattsing}
\end{align}
\end{thm}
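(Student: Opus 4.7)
\textbf{Proof plan for Theorem \ref{t:meant}.}
The unifying tool is the standard potential-theoretic identity for a reversible chain,
\begin{equation*}
\mathbb{E}_x[\tau_A]=\frac{1}{\rr{cap}_\beta(x,A)}\sum_{y\in\mathcal{X}}\mu_\beta(y)\,h_{x,A}(y),
\end{equation*}
together with its natural set-valued analogue obtained when $x$ is replaced by a source set $S$ equipped with the distribution $\mu_\beta|_S/\mu_\beta(S)$. For each of the three claims I need to show that the sum $\sum_y \mu_\beta(y)h(y)$ collapses, up to a factor $1+o(1)$, onto the mass of the metastable valleys lying on the ``source side'' of the rare event. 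This will produce $\mu_\beta(\metadue)$ and $\mu_\beta(\{\metauno^1,\dots,\metauno^n\})$ in \eqref{Egen1}--\eqref{Egen2}, and the combinatorial factor $n\,\mu_\beta(\metauno^r)$ in \eqref{valattsing}.

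For \eqref{Egen1}, I would analyze the equilibrium potential $h_{\metadue,M'}$ with $M'=\{\metauno^1,\dots,\metauno^n,\stab\}$. Using the bound $V_x<\activationenergym$ for every $x\notin\mathcal{X}^m\cup\mathcal{X}^s$ together with \eqref{series01new}, this potential is $1-o(1)$ on the valley of $\metadue$ and $o(1)$ on every other valley, so the sum reduces to $\mu_\beta(\metadue)[1+o(1)]$. For \eqref{Egen2} I study $h_{\{\metauno^1,\dots,\metauno^n\},\stab}$: it equals $1$ on the source set by definition, it is $1-o(1)$ on the valley of $\metadue$ by Condition \ref{t:series01} combined with the strong Markov property at the first hit of $M'$, and it is $o(1)$ on the valley of $\stab$. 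Summing and using $H(\metadue)>H(\metauno^r)$ to absorb the $\metadue$-contribution into the error gives $\sum_y\mu_\beta(y)h(y)=\mu_\beta(\{\metauno^1,\dots,\metauno^n\})[1+o(1)]$, as desired.

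Equation \eqref{valattsing} is the heart of the degeneracy. I consider $h_{\metauno^r,\stab}(y)=\mathbb{P}_y(\tau_{\metauno^r}<\tau_{\stab})$ and split by valleys: on the valley of $\metauno^r$ it equals $1$ by definition; on the valley of $\metauno^q$ with $q\neq r$ I claim it is still $1-o(1)$, because the barrier $\Phi(\metauno^r,\metauno^q)-H(\metauno^r)$ between degenerate wells is strictly less than $\activationenergym$ by Condition \ref{t:series00}, so starting from $\metauno^q$ the chain executes exponentially many excursions among the $n$ degenerate wells before escaping to $\stab$ and almost surely visits $\metauno^r$ first; on the valley of $\metadue$ it is $1-o(1)$ by Condition \ref{t:series01} and the strong Markov property; on the valley of $\stab$ it is $o(1)$. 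Combining these with the a priori estimates $\mu_\beta(\mathrm{valley}(z))=\mu_\beta(z)[1+o(1)]$ for $z$ a metastable or stable state, and with the degeneracy $\mu_\beta(\metauno^q)=\mu_\beta(\metauno^r)[1+o(1)]$ (consequence of $H(\metauno^q)=H(\metauno^r)$), produces $\sum_y\mu_\beta(y)h_{\metauno^r,\stab}(y)=n\,\mu_\beta(\metauno^r)[1+o(1)]$, and dividing by $\rr{cap}_\beta(\metauno^r,\stab)$ gives \eqref{valattsing}.

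The main technical obstacle I expect is the quantitative estimate $h_{\metauno^r,\stab}(y)=1-o(1)$ on the valley of $\metauno^q$ for $q\neq r$. Turning the strict barrier inequality of Condition \ref{t:series00} into an exponential lower bound will likely proceed via the capacity-comparison bound
\begin{equation*}
\mathbb{P}_{\metauno^q}(\tau_{\stab}<\tau_{\metauno^r})\leq \frac{\rr{cap}_\beta(\metauno^q,\stab)}{\rr{cap}_\beta(\metauno^q,\{\metauno^r,\stab\})},
\end{equation*}
combined with two-sided Freidlin--Wentzell capacity estimates, which expose a strictly positive gap $\activationenergym-(\Phi(\metauno^r,\metauno^q)-H(\metauno^r))>0$ in the exponent. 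A secondary point is fixing the precise meaning of $\mathbb{E}_{\{\metauno^1,\dots,\metauno^n\}}$ in \eqref{Egen2}: on the (quasi-)stationary initial distribution this reduces, by the degeneracy of energies, to the uniform average over the $n$ wells, which in turn matches the factor $n$ appearing in \eqref{valattsing} through the identity $\mu_\beta(\{\metauno^1,\dots,\metauno^n\})=n\,\mu_\beta(\metauno^r)[1+o(1)]$.
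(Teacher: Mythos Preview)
Your approach is essentially the same as the paper's: both start from the representation $\mathbb{E}_x[\tau_A]=\rr{cap}_\beta(x,A)^{-1}\sum_y\mu_\beta(y)h_{x,A}(y)$, both argue that the sum is dominated by the metastable wells on the source side, and both use a capacity-comparison bound on $h$ together with the strict barrier inequality $\Phi(\metauno^r,\metauno^q)-H(\metauno^r)<\activationenergym$ from Condition~\ref{t:series00} to obtain $h_{\metauno^r,\stab}(\metauno^q)=1-o(1)$ for $q\neq r$, which is indeed the key step for the factor $n$ in \eqref{valattsing}.

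One correction is needed. You invoke Condition~\ref{t:series01} to control $h$ on the valley of $\metadue$ in the proofs of \eqref{Egen2} and \eqref{valattsing}, but Theorem~\ref{t:meant} assumes only Condition~\ref{t:series00}. This extra hypothesis is unnecessary: since $H(\metadue)>H(\metauno^r)$, the trivial bound $h\le 1$ combined with $\mu_\beta(\metadue)=o(\mu_\beta(\metauno^r))$ already absorbs the $\metadue$-contribution into the error term (as you yourself observe for \eqref{Egen2}). The paper makes this explicit by splitting the sum not by ``valleys'' but by the dichotomy $H(x)\le H(\text{source})$ versus $H(x)>H(\text{source})$: the high-energy part is handled by the Gibbs-measure decay, and the low-energy part (for $x\notin\mathcal{X}^m\cup\mathcal{X}^s$) via two preparatory lemmas establishing $\Phi(x,\stab)-H(x)<\activationenergym$ and $\Phi(x,\text{source})\ge H(\text{source})+\activationenergym$, which then feed into the capacity-ratio bound for $h$. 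If you drop the references to Condition~\ref{t:series01} and replace your valley decomposition by this energy-level split, your plan coincides with the paper's argument.
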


\begin{thm} \label{t:addition01}
 Assume Condition~\ref{t:series00} and Condition~\ref{t:series02} are satisfied. Then
\begin{align}\label{addition01}
& \mathbb{E}_{\metadue}[\tau_{\{\metauno^1,...,\metauno^n,\stab\}}]
=
e^{\beta\Gamma_\rr{m}}\frac{1}{k_1}[1+o(1)], \\
& \mathbb{E}_{\{\metauno^1,...,\metauno^n\}}[\tau_{\stab}]
=
e^{\beta\Gamma_m}\frac{1}{k_2}[1+o(1)],
\end{align}
\end{thm}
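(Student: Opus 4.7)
The plan is to derive both asymptotic identities by direct substitution, combining the two hypotheses. Theorem~\ref{t:meant} already supplies an exact asymptotic expression for each of the two expected hitting times as a ratio $\mu_\beta/\rr{cap}_\beta$ multiplied by a $[1+o(1)]$ factor, while Condition~\ref{t:series02} gives the large-$\beta$ form of precisely these two ratios as $\tfrac{1}{k_i}e^{\beta\activationenergym}[1+o(1)]$ for $i=1,2$. Chaining the two will immediately yield \eqref{addition01}.

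More concretely, for the first identity I would start from \eqref{Egen1},
\begin{equation*}
 \mathbb{E}_{\metadue}[\tau_{\{\metauno^1,\dots,\metauno^n,\stab\}}]
 = \frac{\mu_\beta(\metadue)}{\rr{cap}_\beta(\metadue,\{\metauno^1,\dots,\metauno^n,\stab\})}\,[1+o(1)],
\end{equation*}
and substitute the first line of \eqref{series02} for the ratio on the right-hand side. The product of two $[1+o(1)]$ factors is still $[1+o(1)]$, so the first equation of \eqref{addition01} follows. The second equation is obtained identically, using \eqref{Egen2} together with the second line of \eqref{series02}.

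The actual mathematical content therefore sits entirely in the two inputs, not in the present theorem. The real work is, on the one hand, the proof of Theorem~\ref{t:meant} (deferred to Appendix~B and based on a potential-theoretic argument that generalizes the non-degenerate framework of \cite{cirillo2017sum} to the setting of several energy-degenerate metastable states $\metauno^1,\dots,\metauno^n$), and, on the other hand, the model-dependent verification of Condition~\ref{t:series02} with explicit prefactors $k_1,k_2$. Once those two ingredients are in place, I do not foresee any obstacle in the substitution step itself: the present theorem is essentially an algebraic repackaging convenient for use in the addition rule to come.
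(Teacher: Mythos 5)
Your proposal is correct and coincides with the paper's own argument: Theorem~\ref{t:addition01} is proved there in one line by applying Theorem~\ref{t:meant} (valid under Condition~\ref{t:series00}) and substituting the two asymptotics of Condition~\ref{t:series02}, exactly the chaining of $[1+o(1)]$ factors you describe. Nothing further is needed.
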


\begin{thm} \label{t:addition03}
 Assume Condition~\ref{t:series00} and Condition~\ref{t:series03} are satisfied. Then
\begin{align}\label{addition01}
& \mathbb{E}_{\metauno^r}[\tau_{\stab}]
=
 e^{\beta\Gamma_m} \frac{n}{c_i}[1+o(1)], \qquad \text{for every } i=1,...,n.
\end{align}
\end{thm}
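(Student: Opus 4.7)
The plan is to derive Theorem~\ref{t:addition03} as an immediate corollary of Theorem~\ref{t:meant}, specifically its last identity~\eqref{valattsing}, together with the hypothesis expressed by Condition~\ref{t:series03}. The nontrivial content---namely the emergence of the factor $n$ and the fact that the asymptotics of $\mathbb{E}_{\metauno^r}[\tau_{\stab}]$ is captured by the ratio $n\mu_\beta(\metauno^r)/\rr{cap}_\beta(\metauno^r,\stab)$ up to a $[1+o(1)]$ multiplicative error---is already packaged inside Theorem~\ref{t:meant}, which can be invoked since Condition~\ref{t:series00} is assumed here.

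First I would start from~\eqref{valattsing}, which gives
\[
\mathbb{E}_{\metauno^r}[\tau_{\stab}]=\frac{n\,\mu_\beta(\metauno^r)}{\rr{cap}_\beta(\metauno^r,\stab)}\,[1+o(1)].
\]
Next I would substitute Condition~\ref{t:series03}, which supplies the estimate $\mu_\beta(\metauno^r)/\rr{cap}_\beta(\metauno^r,\stab)=c_i^{-1}\,e^{\beta\Gamma_m}[1+o(1)]$, to obtain
\[
\mathbb{E}_{\metauno^r}[\tau_{\stab}]=\frac{n}{c_i}\,e^{\beta\Gamma_m}\,[1+o(1)]\cdot[1+o(1)]=\frac{n}{c_i}\,e^{\beta\Gamma_m}\,[1+o(1)],
\]
where the last equality simply uses that the product of two functions of the form $1+o(1)$ is again $1+o(1)$ as $\beta\to\infty$.

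Since the argument reduces to a one-line substitution once Theorem~\ref{t:meant} and Condition~\ref{t:series03} are available, there is no real obstacle at this stage. The genuine work has been done elsewhere: in Theorem~\ref{t:meant} one must relate expected hitting times to the capacity-measure ratio and, crucially, explain how the factor $n$ arises from the $n$ energy-degenerate metastable states $\metauno^1,\dots,\metauno^n$ (essentially via $\mu_\beta(\{\metauno^1,\dots,\metauno^n\})=n\,\mu_\beta(\metauno^r)[1+o(1)]$, which follows from $H(\metauno^r)=H(\metauno^q)$ in Condition~\ref{t:series00}, together with a symmetrization argument relating the capacity from a single $\metauno^r$ to that from the whole set). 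The other substantive work, for the PCA model of Section~\ref{mdr}, will be to verify Condition~\ref{t:series03}, i.e.\ to identify the constants $c_i$ explicitly.
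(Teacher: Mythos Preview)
Your proposal is correct and matches the paper's own proof, which simply states that Theorem~\ref{t:addition03} follows immediately by applying Theorem~\ref{t:meant} (specifically~\eqref{valattsing}) together with Condition~\ref{t:series03}. You have written out the substitution in slightly more detail than the paper does, but the approach is identical.
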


\begin{thm}\label{t:addition02}
 Assume Condition~\ref{t:series00}, Condition~\ref{t:series01}, and Condition~\ref{t:series02} are satisfied. Then
\begin{equation}
\label{addition02}
\mathbb{E}_{\metadue}[\tau_{\stab}]
=
e^{\beta\Gamma_m}\Big(\frac{1}{k_1}+\frac{1}{k_2}\Big)[1+o(1)]
\end{equation}
\end{thm}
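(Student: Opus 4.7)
The plan is a classical strong Markov decomposition at the first hitting time of $\{\metauno^1,\ldots,\metauno^n,\stab\}$, combined with Theorem~\ref{t:addition01} applied to both resulting pieces. This generalizes the non-degenerate argument of \cite{cirillo2017sum} to our setting with several degenerate metastable states $\metauno^r$.

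First, set $\sigma:=\tau_{\{\metauno^1,\ldots,\metauno^n,\stab\}}$. By the strong Markov property,
\begin{equation*}
\mathbb{E}_{\metadue}[\tau_{\stab}]
=\mathbb{E}_{\metadue}[\sigma]
+\sum_{r=1}^n p_r\,\mathbb{E}_{\metauno^r}[\tau_{\stab}],
\qquad p_r:=\mathbb{P}_{\metadue}(X_\sigma=\metauno^r),
\end{equation*}
where the contribution from $\{X_\sigma=\stab\}$ drops because $\mathbb{E}_{\stab}[\tau_{\stab}]=0$. Theorem~\ref{t:addition01}, under Conditions~\ref{t:series00} and~\ref{t:series02}, gives $\mathbb{E}_{\metadue}[\sigma]=e^{\beta\Gamma_m}\frac{1}{k_1}[1+o(1)]$, which already accounts for the first summand in~\eqref{addition02}. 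Moreover, since $\{\tau_{\stab}<\tau_{\{\metauno^1,\ldots,\metauno^n\}}\}=\bigcap_r\{\tau_{\stab}<\tau_{\metauno^r}\}$, Condition~\ref{t:series01} yields
\begin{equation*}
\sum_{r=1}^n p_r=1-\mathbb{P}_{\metadue}(\tau_{\stab}<\tau_{\{\metauno^1,\ldots,\metauno^n\}})
\ge 1-\min_{r}\mathbb{P}_{\metadue}(\tau_{\stab}<\tau_{\metauno^r})
=1-o(1).
\end{equation*}

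It therefore suffices to show that $\mathbb{E}_{\metauno^r}[\tau_{\stab}]=e^{\beta\Gamma_m}\frac{1}{k_2}[1+o(1)]$ uniformly in $r=1,\ldots,n$, since then $\sum_r p_r\,\mathbb{E}_{\metauno^r}[\tau_{\stab}]=e^{\beta\Gamma_m}\frac{1}{k_2}[1+o(1)]$ and~\eqref{addition02} follows by summing the two contributions. To this end I would first invoke Theorem~\ref{t:addition01} together with the fact that, by Condition~\ref{t:series00}, all $\metauno^r$ share the same virtual energy and hence the same invariant weight, to write
\begin{equation*}
\frac{1}{n}\sum_{r=1}^n \mathbb{E}_{\metauno^r}[\tau_{\stab}]
=
\mathbb{E}_{\{\metauno^1,\ldots,\metauno^n\}}[\tau_{\stab}]
=
e^{\beta\Gamma_m}\frac{1}{k_2}[1+o(1)].
\end{equation*}
It then remains to reduce the individual terms to their arithmetic mean. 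For this I would use the triangle-type bound $\mathbb{E}_{\metauno^r}[\tau_{\stab}]\le\mathbb{E}_{\metauno^r}[\tau_{\metauno^q}]+\mathbb{E}_{\metauno^q}[\tau_{\stab}]$ combined with the crucial strict inequality $\tilde V:=\max_{r,q}(\Phi(\metauno^r,\metauno^q)-H(\metauno^r))<\Gamma_m$ from Condition~\ref{t:series00}. Standard hitting-time estimates in the rare-transitions setting (see e.g.\ \cite{catoni1999simulated,cirillo2015metastability}) give $\mathbb{E}_{\metauno^r}[\tau_{\metauno^q}]\le e^{\beta(\tilde V+\gamma)}$ for any $\gamma>0$ and $\beta$ large, so that $|\mathbb{E}_{\metauno^r}[\tau_{\stab}]-\mathbb{E}_{\metauno^q}[\tau_{\stab}]|=o(e^{\beta\Gamma_m})$ uniformly in $r,q$. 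Since the mean is of exact order $e^{\beta\Gamma_m}/k_2$, each individual term must equal this common value up to a multiplicative $[1+o(1)]$.

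The main obstacle, and the only place where the strict inequality in Condition~\ref{t:series00} genuinely enters, is precisely the bound $\mathbb{E}_{\metauno^r}[\tau_{\metauno^q}]=o(e^{\beta\Gamma_m})$; one must be careful that the rare-transitions hitting-time estimate is applied between two states of equal energy, where a naive first-hitting-of-lower-energy bound would not suffice and one instead needs an estimate in terms of the communication height. Once this is in place, the rest of the proof is essentially bookkeeping on top of Theorem~\ref{t:addition01} and Condition~\ref{t:series01}.
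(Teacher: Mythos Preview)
Your approach is correct and rests on the same strong-Markov decomposition at $\sigma=\tau_{\{\metauno^1,\ldots,\metauno^n,\stab\}}$ as the paper (packaged there as Lemma~\ref{t:dimo00}). The paper is shorter: it applies Theorem~\ref{t:addition01} directly to the \emph{set} quantity $\mathbb{E}_{\{\metauno^1,\ldots,\metauno^n\}}[\tau_{\stab}]$ appearing in the decomposition, without ever splitting it into individual $\mathbb{E}_{\metauno^r}[\tau_{\stab}]$. You instead take the detour of proving that all $\mathbb{E}_{\metauno^r}[\tau_{\stab}]$ agree to leading order via $\mathbb{E}_{\metauno^r}[\tau_{\metauno^q}]=o(e^{\beta\Gamma_m})$, which is where the strict inequality $\Phi(\metauno^r,\metauno^q)-H(\metauno^r)<\Gamma_m$ of Condition~\ref{t:series00} enters. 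This extra step is not needed for the paper's argument as stated, but it does make rigorous a point the paper glosses over in the proof of Lemma~\ref{t:dimo00}: the strong Markov property really produces $\mathbb{E}_{X_\sigma}[\tau_{\stab}]$ with a random entry point in $\{\metauno^1,\ldots,\metauno^n\}$, and matching this to the set-expectation of Theorem~\ref{t:addition01} implicitly requires either your coincidence result or a careful identification of starting measures. One minor remark: your displayed identity $\frac{1}{n}\sum_r\mathbb{E}_{\metauno^r}[\tau_{\stab}]=\mathbb{E}_{\{\metauno^1,\ldots,\metauno^n\}}[\tau_{\stab}]$ is not exact under the potential-theoretic definition used in \eqref{defvalatt} (the starting law is the last-exit distribution, not uniform), but since you have already shown the individual terms coincide up to $o(e^{\beta\Gamma_m})$, any convex combination does the job and the conclusion is unaffected.
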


 We remark that Theorem~\ref{t:addition02} gives an addition formula for the mean hitting time of $\stab$ starting at $\metadue$. Neglecting terms of order $o(1)$, such a mean time can be written as the sum of the mean hitting time of the subset $\{\metauno^1,...,\metauno^n,\stab\}$ starting at $\metadue$ and of the mean hitting time of $\stab$ starting from any state in $\{\metauno^1,...,\metauno^n\}$. It is very interesting to note that in this decomposition no role is played by the mean hitting time of $\{\metauno^1,...,\metauno^n\}$ starting at $\metadue$. 

\section{Model-dependent results}\label{mdr}

\subsection{The model}\label{model}

 We consider the \textsl{reversible PCA model for Spin Systems} introduced by Derrida in \cite{derrida1989dynamical}, see also \cite{cirillo2003metastability}. In the second example of Section \ref{examples}, we considered a general PCA, but from now on we restrict ourselves to a specific nearest-neighbor interaction, see figure \ref{neighbors}. Consider the two–dimensional torus with $L$ even $\Lambda^2_{L}:=\{0,...,L-1\}^{2}$, endowed with the Euclidean metric. To each site $i\in\Lambda$ we associate a variable $\sigma(i)\in \{-1,+1\}$. $\Lambda^2_{L}$ represents an interacting particles system characterized by their spin and we interpret $\sigma(i)=+1$ (respectively $\sigma(i)=-1$) as indicating that the spin at site $i$ is pointing upwards (respectively downwards). Let $\mathcal{X}:=\{-1,+1\}^{\Lambda}$ be the \emph{configuration space}, let $\beta:=\frac{1}{T} >0$  where $T$ is thought of as the temperature. Let $h\in (0,1)$ be a parameter representing the \emph{external ferromagnetic field}. We do not consider the case $h>1$, because in that case there is no metastable behavior. The dynamics of the system are modelled as a Markov chain $(\sigma_n)_{n \in \mathbb{N}}$ on $\mathcal{X}$ with transition matrix defined in \eqref{ESSE}, \eqref{eqmark}. In the rest of the paper, we will choose
\begin{equation}\label{KAPPA}
K(i-j):=
\bigg \{
\begin{array}{rl}
1 & \text{if $|i-j|= 1$}, \\
0 & \text{otherwise}.\\
\end{array}
\end{equation}
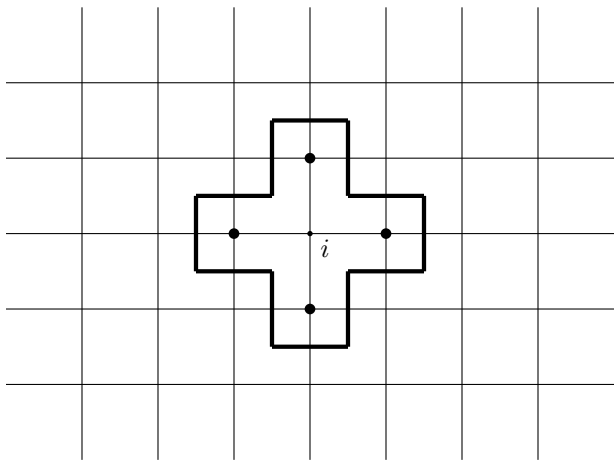
\begin{figure}\label{neighbors}
\begin{center}
    \begin{tikzpicture}
    \fill (0,0) circle(1pt);
    \node at (0.2,-0.2) {$i$};
    \fill (1,0) circle(2pt);
    \fill (-1,0) circle(2pt);
    \fill (0,1) circle(2pt);
    \fill (0,-1) circle(2pt);

    \path (-4,0) edge (4,0);
    \path (-4,1) edge (4,1);
    \path (-4,2) edge (4,2);
    \path (-4,-1) edge (4,-1);
    \path (-4,-2) edge (4,-2);

    \path (-3,-3) edge (-3,3);
    \path (-2,-3) edge (-2,3);
    \path (-1,-3) edge (-1,3);
    \path (0,-3) edge (0,3);
    \path (1,-3) edge (1,3);
    \path (2,-3) edge (2,3);
    \path (3,-3) edge (3,3);

    \path [ultra thick] (-1.5,-0.5) edge (-1.5,0.5);
    \path [ultra thick] (1.5,-0.5) edge (1.5,0.5);
    \path [ultra thick] (-0.5,0.5) edge (-0.5,1.5);
    \path [ultra thick] (0.5,0.5) edge (0.5,1.5);
    \path [ultra thick] (-0.5,-0.5) edge (-0.5,-1.5);
    \path [ultra thick] (0.5,-0.5) edge (0.5,-1.5);
    \path [ultra thick] (-1.5,-0.5) edge (-0.5,-0.5);
    \path [ultra thick] (0.5,0.5) edge (1.5,0.5);
    \path [ultra thick] (-1.5,0.5) edge (-0.5,0.5);
    \path [ultra thick] (0.5,-0.5) edge (1.5,-0.5);
    \path [ultra thick] (-0.5,1.5) edge (0.5,1.5);
    \path [ultra thick] (-0.5,-1.5) edge (0.5,-1.5);

    \end{tikzpicture}
\end{center}
   \caption{In black are highlighted the sites $j$ such that $K(i-j)\neq 0$ in the reversible PCA model for spin systems.}
    \end{figure}
 Note that the transition probability $p_{i,\sigma}(s)$ for the spin $\sigma(i)$ given in \eqref{eqprob} depends only on the values of the adjacent spins.

 The system evolves in discrete time steps, where at each step, all the spins are updated simultaneously according to the probability distribution \eqref{eqprob}. Intuitively, the value of the spin is likely to align with the local effective field $S_\sigma(i)+h$. Here $S_\sigma(i)$ represents a ferromagnetic interaction among spins.

 The Markov chain $\sigma_n$ satisfies the detailed balance property \eqref{detailedbalance}, where $G(\cdot)$ in \eqref{hamiltonianFunction} is the \emph{Hamiltonian function}. Equivalently, the Markov chain is reversible with respect to the Gibbs measure \eqref{GBMgeneral} and this implies that the measure $\mu$ is stationary. Finally, given $\sigma,\eta$ $\in \mathcal{X}$, we define the energy cost of the transition from $\sigma$ to $\eta$ for our specific PCA, as
\begin{equation}\label{Delta}
 \Delta(\sigma,\eta):=-\lim_{\beta \rightarrow \infty } \frac{\log p(\sigma,\eta)}{\beta}=\sum_{\substack{i\in \Lambda : \\ \eta (i)|S_{\sigma}(i)+h|<0}} 2|S_{\sigma}(i)+h|.
\end{equation}
 Note that $\Delta(\sigma,\eta)\geq 0$ and, perhaps surprisingly, $\Delta(\sigma,\eta)$ is not necessarily equal to $\Delta(\eta,\sigma)$. We also note that condition \eqref{Delta} is sometimes written more explicitly as in \eqref{Deltageneral}. The last equality in \eqref{Delta} is obtained as follows (for more details, see Appendix \ref{appendixA}),
\begin{align*}
      -\lim_{\beta \rightarrow \infty } \frac{\log p(\sigma,\eta)}{\beta} & = \sum_{i \in \Lambda:\eta(i)(S_{\sigma}(i)+h)<0} \lim_{\beta \rightarrow \infty } \frac{\log ({1+\exp\{2\beta |S_{\sigma}(i)+h|\}})}{\beta} \\
     & =\sum_{i \in \Lambda:\eta(i)(S_{\sigma}(i)+h)<0} 2|S_{\sigma}(i)+h|.
\end{align*}
Let us fix the notation of some important states as follows:
\begin{itemize}
     \item $\underline{+1}$ is the configuration such that $\underline{+1}(i)=+1$ for every $i\in\Lambda$;
     \item $\underline{-1}$ is the configuration such that $\underline{-1}(i)=-1$ for every $i\in\Lambda$;
     \item $\underline{c}^e$ and $\underline{c}^o$ are the configurations such that $\underline{c}^e(i)=(-1)^{i_1+i_2}$ and $\underline{c}^o(i)=(-1)^{i_1+i_2+1}$  for every $i=(i_1,i_2)\in\Lambda$. These configuration are called \emph{chessboard configurations}.
\end{itemize}
 Next we define the \emph{virtual energy} as the limit 
\begin{equation}\label{Ham}
 \lim_{\beta\to\infty}G(\sigma):=H(\sigma)= -h\sum_{i\in\Lambda}\sigma(i)-\sum_{i\in\Lambda}|S_{\sigma}(i)+h|,
\end{equation}
We distinguish two cases.
\begin{itemize}
     \item Case $h=0$. In this case $H(\sigma)=-\sum_{i \in \Lambda}|S_{\sigma}(i)|$, so there exist four minima of $H$ given by the configurations  $\underline{+1}, \underline{-1}$ and the chessboard configurations. The configurations \underline{+1}, $\underline{-1}$ and $\underline{c}$ are ground states and each site of them contributes $-4$ to the total energy.
     \item Case $h >0$. In this case \underline{+1} is the unique ground state. The energy of this state is $(-h-(4+h)) |\Lambda|$, so each site contributes $-h-(4+h)$ to the total energy.
\end{itemize}
 From now on we assume $h>0$, fixed and small. Under periodic boundary conditions, the energy of these configurations is, respectively
\begin{itemize}
    \item $H(\underline{+1}) =-L^2(4 + 2h)$,
    \item $H(\underline{-1}) =-L^2(4-2h)$,
    \item $H(\underline{c}^e)=H(\underline{c}^0) =-4L^2$.
\end{itemize}
 Since $H(\underline{c}^e)=H(\underline{c}^o)$ and $\Delta(\underline{c}^e,\underline{c}^o)=\Delta(\underline{c}^o,\underline{c}^e)=0$, from now on we will indicate either element of the set $\{\underline{c}^e, \, \underline{c}^o\}$ as $\underline{c}$, this is an example of stable pair (see Definition \ref{stablepair}). Therefore, $H(\underline{-1})> H(\underline{c})> H(\underline{+1})$ for $0< h < 1$. Our first goal is to show that $\{\underline{-1},\underline{c}\}$ is the set of metastable states and $\underline{+1}$ is the global minimum (or ground state).

\subsection{Main model-dependent results}
 In the setup introduced in \cite{manzo2004essential}, the minimal description of the metastability phenomenon is given in terms of $\mathcal{X}^s$, $\mathcal{X}^m$ and $\activationenergym$, so we concentrate our attention on these. In particular we determine the metastable and stable stases and we show that the maximal stability level $\activationenergym$ is equal to the energy barrier $\activationenergy$, defined as \cite[(3.29)]{cirillo2003metastability}
\begin{equation}
\Gamma\equiv \activationenergy=-2h\lambda^2+2\lambda(4+h)-2h,
\end{equation}
 where $\lambda$ is the \emph{critical length} computed in \cite[(3.24)]{cirillo2003metastability} and defined as
\begin{equation}\label{critical length}
\lambda:=\Big[\frac{2}{h}\Big]+1,
\end{equation}
 where $[\cdot]$ is the integer part. Assuming that the system is prepared in the state $\sigma_0=\underline{-1}$, with probability tending to one as $\beta \to \infty$ the system visits the chessboard $\underline{c}$ before relaxing to the stable state $\underline{+1}$. Moreover, by \cite[Theorem 3.11, Theorem 3.13]{cirillo2003metastability} along the tube of paths from $\underline{-1}$ to $\underline{c}$ the system visits a certain set of configurations called \emph{critical droplets from $\underline{-1}$ to $\underline{c}$}. The critical droplets are all those configurations that have a single chessboard droplet of a specific size in a sea of minuses. Instead, along the tube of paths from $\underline{c}$ to $\underline{+1}$ the system visits a certain set of configurations, also called \emph{critical droplets from $\underline{c}$ to $\underline{+1}$}, but in this case these are all those configurations that have a single plus droplet of a specific size in a chessboard. The droplet size, in both cases, is the so-called critical length $\lambda$. We then say that a rectangle is \emph{supercritical} (resp.~\emph{subcritical}) if the side of the rectangle is greater than $\lambda$ (resp.~smaller than $\lambda$). Formally, the chessboard droplet is a supercritical rectangle with a one-by-one protuberance attached to one of the two longest sides and with the spin plus in this protuberance. Note that starting from different initial configurations yields different kinds of droplets.

 We are finally ready to present our model-dependent results. In Lemma \ref{EST} we show that all states different from ${\underline{+1},\underline{-1},\underline{c}}$ have a strictly lower stability level than $\activationenergy$. Using this lemma and \cite[Lemma 3.4, Lemma 4.1]{cirillo2003metastability}, we show that $\activationenergy=\activationenergym$, allowing us to conclude in Theorem \ref{IdMS} that the only metastable states are indeed $\underline{-1}$ and $\underline{c}$.
 \begin{lem}[Estimate of stability levels]\label{EST} For every $\eta\in\mathcal{X}\setminus\{\underline{-1},\underline{c},\underline{+1}\},$ there exists $V^*$ such that $V_\eta\leq V^*<\activationenergy$.
\end{lem}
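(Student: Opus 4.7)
\medskip

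\noindent\textbf{Proof plan for Lemma \ref{EST}.}

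The plan is to construct, for every $\eta \in \mathcal{X}\setminus\{\underline{-1},\underline{c}^e,\underline{c}^o,\underline{+1}\}$, an explicit path $\omega \in \Theta(\eta,\mathcal{I}_\eta)$ such that $\Phi_\omega - H(\eta) < \activationenergy$. Since $\mathcal{X}$ is finite and $V_\eta \le \Phi_\omega - H(\eta)$, taking $V^* := \max_\eta V_\eta$ over the (finitely many) configurations outside the four distinguished states will then yield a uniform bound $V^* < \activationenergy$.

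First I would reduce the problem to the case where $\eta$ is a local minimum of $H$. If $\eta$ is not a local minimum, then there exists a single-spin-flip neighbor $\eta'$ with $H(\eta') < H(\eta)$ reachable with a transition cost controlled by $\Delta(\eta,\eta')$, and by \eqref{Delta} this cost is bounded by a quantity depending only on the local field $|S_\eta(i)+h|$, which in turn is uniformly bounded by $4+h$ times a small combinatorial factor; for such $\eta$ the resulting one-step path trivially gives $V_\eta < \activationenergy$ (a direct check for small $h$). The nontrivial case is thus a local minimum $\eta\notin\{\underline{-1},\underline{c}^e,\underline{c}^o,\underline{+1}\}$, and here I would invoke the characterization of local minima and \emph{traps} established in \cite{cirillo2003metastability}, which shows that any such $\eta$ is, up to translations and symmetries, of one of a handful of geometric types: subcritical rectangular plus-droplets in a sea of minuses, subcritical chessboard droplets in a sea of minuses, subcritical plus-droplets sitting in a chessboard background, and unions thereof that are still subcritical (linear size strictly less than $\lambda$).

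For each such local minimum I would construct an explicit escape path to a configuration with strictly lower energy by a ``shrinking'' procedure: pick a corner of the smallest side of the droplet, flip the spin there (this is the highest-cost move along the path), and then erase the remaining row/column by successively flipping spins that are adjacent to at least two spins of the opposite background. The single costly move has transition energy bounded by the cost of shrinking a \emph{subcritical} droplet, which by the computation in \cite{cirillo2003metastability} is of the form $2\ell(4+h)-2h\ell^2$ type terms with $\ell < \lambda$; since $\activationenergy = -2h\lambda^2 + 2\lambda(4+h) - 2h$ is precisely the value at the critical length $\lambda$, and the underlying function $\ell \mapsto 2\ell(4+h) - 2h\ell^2$ is strictly maximized at $\ell = \lambda$, each such escape cost is strictly below $\activationenergy$. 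The mixed cases (two nested droplets, chessboard bordered by pluses, etc.) are handled by applying the shrinking procedure to the component with the smaller side.

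The main obstacle will be the case enumeration: one must show that the geometric types listed above are \emph{exhaustive} among the local minima different from $\underline{-1},\underline{c}^e,\underline{c}^o,\underline{+1}$, and for each type produce the path and verify the height bound. The key geometric lemmas needed for this step are essentially already contained in \cite[Sect.~3--4]{cirillo2003metastability} (characterization of clusters that can shrink vs.\ grow, identification of the critical length $\lambda$ from \eqref{critical length}, structure of traps), so I would lean heavily on those; the new contribution is organizing them to yield the uniform strict inequality. A useful final check is to verify monotonicity of the height bound in the droplet side length, ensuring that passing from any subcritical configuration to a smaller one only decreases the barrier, which together with $\ell < \lambda$ delivers the strict inequality $V^* < \activationenergy$.
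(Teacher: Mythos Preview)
Your overall strategy---reduce to local minima, then exhibit for each an escape path with height strictly below $\activationenergy$---matches the paper's, but two steps in your plan are genuinely wrong and would make the argument fail.

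First, your classification of local minima is incomplete. You assert that every local minimum $\eta\notin\{\underline{-1},\underline{c},\underline{+1}\}$ is a \emph{subcritical} droplet (or union thereof). That is false: a supercritical rectangle of chessboard in a sea of minuses, or a supercritical plus-rectangle in a chessboard, is also a trap (a stable pair), and so are configurations with strips winding around the torus. For a supercritical rectangle the shrinking path you describe does \emph{not} end in $\mathcal{I}_\eta$: removing a slice from a supercritical droplet \emph{raises} the energy. The paper handles these cases by \emph{growing} the droplet (adding a protuberance and then filling a new slice via repeated application of $T$), and handles strips separately; both families must appear in any exhaustive case analysis.

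Second, your height estimate is the wrong quantity. The expression $2\ell(4+h)-2h\ell^2$ is (up to constants) the energy of a droplet of side $\ell$ relative to the background, i.e.\ the height of the global saddle, not the stability level of a local minimum. The actual cost of your shrinking path for a subcritical side $l$ is $2h(l-1)$ (each corner removal costs $2h$), and the cost of the growing path for a supercritical droplet is the single protuberance step $\Delta=2(2-h)$, after which every step is downhill. The uniform bound that emerges is $V^*=2(2-h)$, a constant independent of droplet size, and the inequality $V^*<\activationenergy$ is then a one-line check; your monotonicity-in-$\ell$ argument for a droplet-energy function does not give this.

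A smaller issue: your reduction away from non-local-minima via ``single-spin-flip neighbor with lower $H$'' is not adapted to this parallel dynamics. In the PCA, $\Delta(\eta,\eta')$ for a single flip also charges every \emph{other} site whose spin is kept against its local field, so the cost can be large. The correct one-step escape is $(\eta,T\eta)$, which has $\Delta(\eta,T\eta)=0$ and lands in $\mathcal{I}_\eta$ whenever $\eta\notin\mathcal{X}_0$; this is what the paper uses.
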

 \begin{thm}[Identification of metastable states]\label{IdMS} For the reversible PCA model \eqref{KAPPA} we have $\activationenergym=\activationenergy$ and thus $\mathcal{X}^m=\{\underline{-1},\underline{c}\}$.
\end{thm}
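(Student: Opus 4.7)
The plan is to combine Lemma~\ref{EST} with the communication-height estimates from \cite[Lemma 3.4, Lemma 4.1]{cirillo2003metastability} to pin down the stability level of every non-stable configuration, and then read off $\activationenergym$ and $\mathcal{X}^m$ from the definitions in \eqref{Gammam}.

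First I would note that, for $0<h<1$, the explicit values give $H(\underline{+1})<H(\underline{c})<H(\underline{-1})$, so $\underline{+1}$ is the unique ground state and $\mathcal{X}^s=\{\underline{+1}\}$. Lemma~\ref{EST} then ensures that every $\eta\notin\{\underline{-1},\underline{c}^e,\underline{c}^o,\underline{+1}\}$ satisfies $V_\eta\le V^*<\activationenergy$. Hence none of these configurations can attain $\activationenergym$ provided that I can show $V_{\underline{-1}}=V_{\underline{c}^e}=V_{\underline{c}^o}=\activationenergy$.

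Next I would establish $V_{\underline{c}^e}=V_{\underline{c}^o}=\activationenergy$. The upper bound $V_{\underline{c}^e}\le\activationenergy$ is immediate from \cite[Lemma 3.4]{cirillo2003metastability}, which exhibits a path from $\underline{c}^e$ to $\underline{+1}\in\mathcal{I}_{\underline{c}^e}$ of height exactly $H(\underline{c}^e)+\activationenergy$. For the matching lower bound I would argue by contradiction: suppose there existed $\sigma\in\mathcal{I}_{\underline{c}^e}$ with $\Phi(\underline{c}^e,\sigma)<H(\underline{c}^e)+\activationenergy$. If $\sigma=\underline{+1}$ the contradiction with \cite[Lemma 3.4]{cirillo2003metastability} is immediate. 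Otherwise $H(\sigma)<H(\underline{c}^e)$ already excludes $\sigma\in\{\underline{-1},\underline{c}^e,\underline{c}^o\}$, so $\sigma\notin\{\underline{-1},\underline{c}^e,\underline{c}^o,\underline{+1}\}$ and Lemma~\ref{EST} gives $V_\sigma<\activationenergy$. I could then continue the descent from $\sigma$ into $\mathcal{I}_\sigma$ at cost strictly less than $\activationenergy$, and iterate; since $\mathcal{X}$ is finite and energies strictly decrease at each step, the procedure terminates at $\underline{+1}$. Concatenating the sub-paths produces a path from $\underline{c}^e$ to $\underline{+1}$ whose height is the maximum of the sub-path heights, each strictly below $H(\underline{c}^e)+\activationenergy$ because every subsequent sub-path starts at an energy strictly lower than $H(\underline{c}^e)$. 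This contradicts \cite[Lemma 3.4]{cirillo2003metastability}. The same argument applies to $\underline{c}^o$.

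The analogous argument yields $V_{\underline{-1}}=\activationenergy$: the upper bound comes from \cite[Lemma 4.1]{cirillo2003metastability} applied to the set $\{\underline{c}^e,\underline{c}^o,\underline{+1}\}\subset\mathcal{I}_{\underline{-1}}$, and the lower bound is obtained by the same iterative descent reaching $\underline{+1}$ while keeping the height below $H(\underline{-1})+\activationenergy$. Combining everything, $V_{\underline{-1}}=V_{\underline{c}^e}=V_{\underline{c}^o}=\activationenergy$ while $V_\eta<\activationenergy$ for every other non-stable $\eta$, so $\activationenergym=\activationenergy$ and $\mathcal{X}^m=\{\underline{-1},\underline{c}^e,\underline{c}^o\}$, abbreviated as $\{\underline{-1},\underline{c}\}$. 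The only delicate step, and the one to write out carefully, is the iterative descent: one must verify that the finiteness of $\mathcal{X}$ guarantees termination and that the concatenated path's maximal height is indeed controlled by the first segment's maximum, since from the second segment onward the base energy has dropped strictly below $H(\underline{c}^e)$ (respectively $H(\underline{-1})$) and each subsequent $V_\sigma$ is bounded by $V^*<\activationenergy$.
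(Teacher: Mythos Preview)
Your argument is correct and uses the same two inputs as the paper—Lemma~\ref{EST} and the communication-height identities $\Phi(\underline{-1},\underline{+1})-H(\underline{-1})=\Phi(\underline{c},\underline{+1})-H(\underline{c})=\Phi(\underline{-1},\underline{c})-H(\underline{-1})=\activationenergy$ from \cite[Lemmas 3.4 and 4.1]{cirillo2003metastability}—but the route differs. The paper does not compute $V_{\underline{-1}}$ and $V_{\underline{c}}$ directly; it simply checks the two hypotheses of the general black-box result \cite[Theorem~2.4]{cirillo2013relaxation} (reproduced in Appendix~\ref{appendixA}) with $A=\{\underline{-1},\underline{c}\}$ and $a=\activationenergy$, and reads off $\activationenergym=\activationenergy$ and $\mathcal{X}^m=A$. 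Your iterative-descent argument is in effect a self-contained reproof of the implication of that theorem needed here, which makes the write-up independent of \cite{cirillo2013relaxation} at the cost of a bit more work.

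One small imprecision to fix in the $\underline{-1}$ case: the descent need not reach $\underline{+1}$ while staying inside configurations to which Lemma~\ref{EST} applies, because it may first land on $\underline{c}^e$ or $\underline{c}^o$ (both lie in $\mathcal{I}_{\underline{-1}}$ and are excluded from Lemma~\ref{EST}). This is harmless—if the descent hits $\underline{c}$ you already get $\Phi(\underline{-1},\underline{c})<H(\underline{-1})+\activationenergy$, contradicting \cite[Lemmas 3.4, 4.1]{cirillo2003metastability}, or alternatively you append a $\underline{c}\to\underline{+1}$ path of height $H(\underline{c})+\activationenergy<H(\underline{-1})+\activationenergy$ and contradict $\Phi(\underline{-1},\underline{+1})=H(\underline{-1})+\activationenergy$—but you should say so explicitly rather than asserting that ``each subsequent $V_\sigma$ is bounded by $V^*$''.
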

 Theorem \ref{RP} below implies that the system visits a metastable state or a ground state in a time shorter than $e^{\beta V^*+\epsilon}$ and visits a stable state in a time shorter than $e^{\beta \activationenergym+\epsilon}$, uniformly in the starting state for any $\epsilon >0$. We say that a function $\beta \mapsto f(\beta)$ is super exponentially small (SES) if $$\lim_{\beta\to\infty}\log{f(\beta)}=-\infty.$$
 \begin{thm}[Recurrence property]\label{RP} For any $\epsilon >0$, the functions
\begin{equation}\label{eqRP}
 \beta \mapsto \sup_{\eta\in \mathcal{X}}\mathbb{P}_{\eta}(\tau_{\{\underline{+1}, \underline{c}, \underline{-1}\}}>e^{\beta (V^{*}+\epsilon)}), \qquad \beta \mapsto \sup_{\eta\in \mathcal{X}}\mathbb{P}_{\eta}(\tau_{\underline{+1}}>e^{\beta (\activationenergy +\epsilon)})
\end{equation}
are SES.
\end{thm}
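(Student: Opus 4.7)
The plan is to derive both super-exponentially-small estimates from a single general recurrence principle, applied twice with different target sets. The principle is: for any subset $\mathcal{A}\subset\mathcal{X}$, setting $V:=\max_{\eta\notin\mathcal{A}}V_\eta$, one has that $\sup_{\eta\in\mathcal{X}}\mathbb{P}_\eta(\tau_{\mathcal{A}}>e^{\beta(V+\epsilon)})$ is SES for every $\epsilon>0$. This is standard in the pathwise approach to metastability for reversible dynamics with exponentially small transition probabilities, see \cite{manzo2004essential,cirillo2015metastability}. Its proof combines two ingredients: (i) for each $\eta\notin\mathcal{A}$, the definition \eqref{stablev} of stability level together with the large-deviation estimates \eqref{Deltageneral} implies that within time $e^{\beta(V+\epsilon/2)}$ the chain visits $\mathcal{I}_\eta$ with SES failure probability; (ii) iterating this fact on the virtual energy $H$, which takes only finitely many values on $\mathcal{X}$, shows that after at most $|\mathcal{X}|$ such steps the chain must enter $\mathcal{A}$. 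The combinatorial factor $|\mathcal{X}|$ is absorbed into the slack $\epsilon/2$, and a union bound over the iterated attempts preserves the SES character.

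To obtain the first assertion of Theorem \ref{RP}, I apply the principle with $\mathcal{A}=\{\underline{+1},\underline{c},\underline{-1}\}$. By Lemma \ref{EST}, every $\eta\notin\mathcal{A}$ satisfies $V_\eta\leq V^*$, so the relevant maximum equals $V^*$ and the claim follows at once.

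For the second assertion, I apply the principle with $\mathcal{A}=\{\underline{+1}\}$. The configurations $\underline{-1}$ and $\underline{c}$ lie in $\mathcal{X}^m$ by Theorem \ref{IdMS}, hence satisfy $V_{\underline{-1}}=V_{\underline{c}}=\activationenergym=\activationenergy$; every other $\eta\notin\{\underline{+1},\underline{c},\underline{-1}\}$ satisfies $V_\eta\leq V^*<\activationenergy$ by Lemma \ref{EST}. Thus $\max_{\eta\neq\underline{+1}}V_\eta=\activationenergy$, and the principle applied with $V=\activationenergy$ yields the second SES estimate.

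The main obstacle is really Lemma \ref{EST}; once the stability-level bound $V_\eta\leq V^*<\activationenergy$ is available for every configuration outside $\{\underline{+1},\underline{c},\underline{-1}\}$, the present theorem follows essentially mechanically. The only non-routine point in invoking the general pathwise machinery is to check that the standard estimates, often formulated for Metropolis dynamics where $H(x,y)=\max\{H(x),H(y)\}$, continue to hold with the transition-energy notion \eqref{transitionenergy} appropriate to dynamics with exponentially small transitions; the reversibility identity \eqref{proprev} ensures that they do, so no new ideas are required at this stage.
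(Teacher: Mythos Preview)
Your proposal is correct and follows essentially the same route as the paper: both invoke the standard recurrence principle for rare-transition dynamics (the paper cites \cite[Prop.~2.8]{cirillo2015metastability}, which is precisely the result you describe and sketch) twice, once with $\mathcal{A}=\{\underline{-1},\underline{c},\underline{+1}\}=\mathcal{X}_{V^*}$ using Lemma~\ref{EST}, and once with $\mathcal{A}=\{\underline{+1}\}=\mathcal{X}_{\activationenergym}$ using Theorem~\ref{IdMS}. Your additional remarks about Lemma~\ref{EST} being the real work and about the transition-energy formulation carrying over via \eqref{proprev} are accurate and match the paper's viewpoint.
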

%
 Equation \eqref{valoreatteso} in the next theorem already appeared in \cite[Theorem 3.1]{cirillo2016sum}, however the proof there was incomplete. Thanks to the previous theorems we are able to prove it rigorously here. The second part of the next theorem is an application of Theorem \ref{GM} to the reversible PCA model by Derrida.
\begin{thm}\label{ThE} For $\beta$ large enough, we have
\begin{equation}\label{valoreatteso}
     \mathbb{E}_{\underline{-1}}[\tau_{\underline{+1}}]=\bigg(\frac{1}{k_1}+\frac{1}{k_2}\bigg)e^{\beta\activationenergy}(1+o(1)),
\end{equation}
 where $k_1=k_2=8\lambda|\Lambda|$. Moreover for any $0<\epsilon<1$ we have
\begin{equation}\label{lim2PCA}
 \lim_{\beta \rightarrow \infty}{\frac{1}{\beta}\log{ t^{mix}_\beta(\epsilon)}}=\activationenergy,
\end{equation}
 and there exist two constants $0<c_1<c_2<\infty$ independent of $\beta$ such that for every $\beta>0$
\begin{equation}\label{rocompresopca}
 c_1e^{-\beta(\activationenergy+\gamma_1)} \leq \rho_{\beta} \leq c_2e^{-\beta(\activationenergy-\gamma_2)},
\end{equation}
 where $\gamma_1,\gamma_2$ are functions of $\beta$ that vanish for $\beta\to\infty$, and $\rho_{\beta}$ is the spectral gap.
\end{thm}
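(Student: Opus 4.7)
The plan is to derive the three assertions as consequences of the model-independent results of Section \ref{mir} together with Theorem \ref{t:addition02} from Section \ref{s:series}, after verifying that the hypotheses of those results apply to our specific PCA. I split the argument into the transition-time estimate \eqref{valoreatteso} and the mixing/spectral statements \eqref{lim2PCA}--\eqref{rocompresopca}.

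For \eqref{valoreatteso}, the strategy is to invoke Theorem \ref{t:addition02} with the identifications $\stab=\underline{+1}$, $\metadue=\underline{-1}$, $n=2$, $\metauno^1=\underline{c}^e$, $\metauno^2=\underline{c}^o$, and to show that the constants produced by Condition \ref{t:series02} equal $k_1=k_2=8\lambda|\Lambda|$. First I verify the three conditions. Condition \ref{t:series00} combines Theorem \ref{IdMS} with the explicit energies $H(\underline{-1})>H(\underline{c}^e)=H(\underline{c}^o)>H(\underline{+1})$ listed after \eqref{Ham}; the requirement that the communication cost between the two chessboards be smaller than $\activationenergym$ is immediate, since $\Delta(\underline{c}^e,\underline{c}^o)=0$ by direct application of \eqref{Delta} (at every site the local field $S_{\underline{c}^e}(i)+h$ is aligned with $\underline{c}^o(i)$ and opposite to $\underline{c}^e(i)$). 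Condition \ref{t:series01} amounts to showing that starting from $\underline{-1}$ both chessboards are hit before $\underline{+1}$: the visit to $\{\underline{c}^e,\underline{c}^o\}$ is provided by \cite[Theorem 3.11, Theorem 3.13]{cirillo2003metastability}, and once one chessboard is hit the single-step probability $p(\underline{c}^e,\underline{c}^o)$ tends to one by the same computation, so both are visited before $\underline{+1}$ with probability $1-o(1)$.

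The key technical input is Condition \ref{t:series02}, which I would handle along the lines of \cite{cirillo2016sum}. Upper bounds for the two capacities come from the Dirichlet principle \eqref{capac} with a test function interpolating along optimal paths across the critical droplets; matching lower bounds come from a flow construction restricted to the critical saddles. The explicit prefactor $k_1=8\lambda|\Lambda|$ arises from counting the critical configurations along the transition from $\underline{-1}$ to the chessboards: each of the $|\Lambda|$ possible positions of the critical droplet admits $8\lambda$ choices for the attached unit protuberance along the two longest sides of the supercritical rectangle of side $\lambda$. The analogous count, carried out inside a chessboard background for the transition to $\underline{+1}$, yields $k_2=8\lambda|\Lambda|$. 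With the three conditions verified, Theorem \ref{t:addition02} together with $\activationenergym=\activationenergy$ from Theorem \ref{IdMS} produces \eqref{valoreatteso}.

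For \eqref{lim2PCA} and \eqref{rocompresopca} the plan is simply to invoke Theorem \ref{GM} and Theorem \ref{GMM} respectively. The only hypothesis to check is \eqref{P0}, and this was carried out for our PCA in the paragraph on the \emph{Reversible PCA model for Spin Systems} in Section \ref{examples}. Combined with Theorem \ref{IdMS}, the conclusions of the two general theorems translate verbatim into the claimed asymptotics. The main obstacle I foresee is the rigorous identification of the prefactors $k_1,k_2$ in Condition \ref{t:series02}: one must simultaneously establish matching upper and lower bounds for the capacities with the \emph{same} constant, requiring a careful combinatorial enumeration of all the minimal saddles and an exclusion of the subleading non-critical contributions. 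The presence of two degenerate chessboards adds a bookkeeping complication relative to \cite{cirillo2017sum}, because $\underline{c}^e$ and $\underline{c}^o$ must be treated jointly as a single recurrent set when computing $\rr{cap}_\beta(\underline{-1},\{\underline{c}^e,\underline{c}^o,\underline{+1}\})$, while the multiplicity $n=2$ still enters the final formula through Theorem \ref{t:addition02}.
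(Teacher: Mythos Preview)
Your proposal follows essentially the same route as the paper: you apply Theorem~\ref{t:addition02} with $\stab=\underline{+1}$, $\metadue=\underline{-1}$, $\{\metauno^1,\metauno^2\}=\{\underline{c}^e,\underline{c}^o\}$, verify Conditions~\ref{t:series00}--\ref{t:series02} by combining Theorem~\ref{IdMS} with the results of \cite{cirillo2003metastability,cirillo2016sum}, and then obtain \eqref{lim2PCA}--\eqref{rocompresopca} from Theorems~\ref{GM} and~\ref{GMM} via the verification of \eqref{P0} in Section~\ref{examples}. The paper's proof is terser---it cites \cite[Lemmas~3.3--3.5]{cirillo2016sum} directly for Conditions~\ref{t:series01} and~\ref{t:series02} rather than sketching the capacity bounds and saddle combinatorics---but the logical structure is identical, and your identification of the prefactor computation as the main technical burden is exactly where the paper defers to \cite{cirillo2016sum}.
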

 The first term $\frac{1}{k_1}e^{\beta \activationenergy}$ represents the contribution of the mean hitting time $\mathbb{E}_{\underline{-1}}[\tau_{\underline{c}}\textbf{1}_{\{\tau_{\underline{c}}< \tau_{\underline{+1}}\}}]$ while the second term $\frac{1}{k_2}e^{\beta \activationenergy}$ represents the contribution of $\mathbb{E}_{\underline{c}}[\tau_{\underline{+1}}]$.

\section{Proof of model-independent results}\label{pmir}

 Before we prove Theorem \ref{GM}, let us recall some important definitions.
 \begin{defn} [{Cycle, \cite[Def. 2.3]{cirillo2015metastability}, \cite[Def. 4.2]{catoni1999simulated}}] Let $(X_n)_n$ be a Markov chain. A nonempty set $C \subset \mathcal{X}$ is a \emph{cycle} if it is either a singleton or for any $x,y \in C$, such that $x\neq y$,
\begin{equation}
\lim_{\beta \to \infty}-\frac{1}{\beta} \log \mathcal{P}(
X_{\tau_
{(\mathcal{X} \setminus C)\cup\{y\}}}
\neq y \,\, | \,\, X_0=x)>0.
\end{equation}
\end{defn}
 In other words, a nonempty set $C \subset \mathcal{X}$ is a
\emph{cycle} if it is either a singleton or if for any $x \in C$,
 the probability for the process starting from $x$ to leave $C$ without first visiting all the other elements of $C$ is exponentially small. We denote by $\mathcal{C}(\mathcal{X})$ the set of cycles of $\mathcal{X}$.
 \begin{defn}[{Energy Cycle, \cite[(2.17)]{cirillo2015metastability}, \cite[Def. 3.5]{cirillo2015metastability}}]
 A nonempty set $A \subset \mathcal{X}$ is an \emph{energy-cycle} if and only if it is either a singleton or it verifies the relation
\begin{equation}
    \max_{x,y \in A} \Phi(x,y)< \Phi(A, \mathcal{X} \setminus A).
\end{equation}
\end{defn}
\begin{defn}
 Given a cycle $C \subset \mathcal{X}$, we denote by $\mathcal{F}(C)$ the set of the minima of the energy in $C$, namely
\begin{equation}
\mathcal{F}(C):= \{ x \in C \, | \, \min_{y \in C} H(y) = H(x) \}.
\end{equation}
\end{defn}
 The proposition \cite[Prop. 3.10]{cirillo2015metastability} establishes the equivalence between cycle and energy-cycle and allows us to use the equivalence between the approach in \cite{holley1988simulated, catoni1992parallel, catoni1997exit} and the path-wise approaches \cite{cirillo2003metastability, nardi2016hitting, cirillo2015metastability, manzo2004essential, olivieri1995markov, olivieri1996markov, olivieri2005large} that uses the energy-cycle. Next we define the collection of maximal cycles.
 \begin{defn} [{\cite[Def. 20]{nardi2016hitting}, \cite[Def. 2.4]{cirillo2015metastability}}]
 Given a nonempty subset $A \subset \mathcal{X}$, we denote by $\mathcal{M}(A)$ the \emph{collection of
maximal cycles} that partitions $A$, that is
\begin{equation}
   \mathcal{M}(A):=\{C \in \mathcal{C}(\mathcal{X}) \,\, | \,\, C \,\, \text{maximal by inclusion under the constraint} \,\, C \subseteq A\}.
\end{equation}
\end{defn}
 Moreover, we extend to the general setting the definition of the maximal depth given in \cite[Def. 21]{nardi2016hitting} for the setting of Metropolis dynamics.
\begin{defn}
 The \emph{maximal depth} $\Tilde{\Gamma}(A)$ of a nonempty subset $A \subset \mathcal{X}$ is the maximal depth of a cycle contained in $A$, that is
\begin{equation}
    \Tilde{\Gamma}(A):=\max_{C \in \mathcal{M}(A)} \Gamma(C).
\end{equation}
 Trivially $\Tilde{\Gamma}(C):=\Gamma(C)$ if $C \in \mathcal{C}(\mathcal{X})$.
\end{defn}
 \begin{proof}[Proof of Theorem \ref{GM}] We prove \eqref{lim2} by generalizing \cite[Prop. 3.24]{nardi2016hitting}. To do this, we show that $\Tilde{\Gamma}(\mathcal{X} \setminus \{s\})$ is equal to $\activationenergym$. Recall definition \eqref{Gammam}
\begin{align*}
 \activationenergym:=\max_{x\in\mathcal{X}\setminus \{s\}}(\Phi(x,\mathcal{I}_x)-H(x)).
\end{align*}
 Since $\Phi(x,\mathcal{I}_x)\le \Phi(x,s)$, we have that $\activationenergym \le \Tilde{\Gamma}(\mathcal{X}\setminus \{s\})$. To prove the reverse inequality $\activationenergym \ge \Tilde{\Gamma}(\mathcal{X}\setminus \{s\})$, we consider $R_D(x)$, the union of $\{x \}$ and of the points in $\mathcal{X}$ which can be reached by means of paths starting from $x$ with height smaller than the height that is necessary to escape from $D \subset \mathcal{X}$ starting from $x$ \cite[(3.58)]{cirillo2015metastability}. We consider
\begin{equation}
     R_{\mathcal{X} \setminus \{s\}}(x)=\{x\} \cup \{y \in \mathcal{X} \,\, | \,\, \Phi(x,y)< \Phi(x,s)\}.
\end{equation}
 We partition $\mathcal{X}$ into the set of local minima $\mathcal{X}_0$ (i.e., $\mathcal{X}_V$ with $V=0$) and its complement, as $\mathcal{X}=\mathcal{X}_0 \cup (\mathcal{X} \setminus \mathcal{X}_0)$, so that $\mathcal{X} \setminus \{s\}=( \mathcal{X}_0 \cup (\mathcal{X} \setminus \mathcal{X}_0)) \setminus \{s\}=(\mathcal{X}_0 \setminus \{s\}) \cup (\mathcal{X} \setminus \mathcal{X}_0)$. Then,
\begin{equation}
 \Tilde{\Gamma}(\mathcal{X}\setminus \{s\})=\max_{x \in \mathcal{X} \setminus \{s\}}\Gamma(R_{\mathcal{X}\setminus \{s\}}(x))= \max
 \bigg \{ \max_{x \in \mathcal{X} \setminus \mathcal{X}_0}\Gamma(R_{\mathcal{X}\setminus \{s\}}(x)),  \max_{x \in \mathcal{X}_0 \setminus \{s\}}\Gamma(R_{\mathcal{X}\setminus \{s\}}(x)) \bigg \}.
\end{equation}
Let us analyze the two terms on the right separately.
\begin{itemize}
     \item If $x\in \mathcal{X}_0 \setminus \{s\}$, then $R_{\mathcal{X}\setminus \{s\}}(x)=\{y \in \mathcal{X} \,\, | \,\, \Phi(x,y)< \Phi(x,s)\}$ is a non-trivial cycle. Using \cite[Prop. 3.17]{cirillo2015metastability},
    \begin{itemize}
         \item[i)] If $x\in \mathcal{F}(R_{\mathcal{X}\setminus \{s\}}(x))$, then $\Gamma(R_{\mathcal{X} \setminus \{s\}}(x)) \le V_x$, by \cite[Prop. 3.17 (3)]{cirillo2015metastability}.
         \item[ii)] Suppose that $x\not\in \mathcal{F}(R_{\mathcal{X}\setminus \{s\}}(x))$. Consider $\Tilde x=\text{argmin}_{x\in R_{\mathcal{X}\setminus \{s\}}(x)}H(x)$, then $\tilde x \in \mathcal{F}(R_{\mathcal{X}\setminus \{s\}}(x))$ and by \cite[Prop. 3.17 (2), (3)]{cirillo2015metastability} we have $V_x <\Gamma(R_{\mathcal{X} \setminus \{s\}}(x))=\Gamma(R_{\mathcal{X} \setminus \{s\}}(\Tilde x)) = V_{\Tilde x}$. So
        \begin{equation}
             \max_{y \in R_{\mathcal{X}\setminus \{s\}}(x)}V_y=V_{\tilde x}=\Gamma(R_{\mathcal{X}\setminus \{s\}}(x)).
        \end{equation}
    \end{itemize}
 From this follows that
 \begin{equation}
  \max_{x \in \mathcal{X}_0 \setminus \{s\}}\Gamma(R_{\mathcal{X} \setminus \{s\}}(x)) = \max_{x \in \mathcal{X}_0 \setminus \{s\}}\max_{y\in R_{\mathcal{X} \setminus \{s\}}(x)} V_y \le \activationenergym.
 \end{equation}
      \item If $x\in \mathcal{X}\setminus \mathcal{X}_0$, we proceed as follows
    \begin{itemize}
         \item[I)] If $\Phi(x,s)=H(x)$, then $R_{\mathcal{X} \setminus \{s\}}(x)=\{x\}$ because  $\{y \in \mathcal{X} \,\, | \,\, \Phi(x,y)<H(x)\}$ is empty. Indeed, $\Phi(x,y)$ is always greater than or equal to $H(x)$. So, $\Gamma(R_{\mathcal{X} \setminus \{s\}}(x))=\Gamma(\{x\})=0$.
         \item[II)] If $\Phi(x,s)>H(x)$, we choose $\Tilde x=\text{argmin}_{x\in R_{\mathcal{X}\setminus \{s\}}(x)}H(x)$, so $\Tilde x \in \mathcal{X}_0 \setminus \{s\}$ and $\Phi(x,s)=\Phi(\Tilde x, s)$. Then $\{y \in \mathcal{X} \,\, | \,\, \Phi(x,y)< \Phi(x,s)\} \subseteq R_{\mathcal{X} \setminus \{s\}}(\Tilde x)$ and we refer to the previous case $x \in \mathcal{X}_0$, since $\tilde x \in \mathcal{X}_0 \setminus \{s \}$.
    \end{itemize}
\end{itemize}
 This concludes the proof that $\activationenergym \ge \tilde\Gamma (\mathcal{X}\setminus \{s\})$ and hence that $\activationenergym = \tilde\Gamma (\mathcal{X}\setminus \{s\})$.

 The key step in \cite[Prop. 3.24]{nardi2016hitting} was to show that $H_2 = H_3$, $H_2$ is defined as \cite[Theorem 5.1]{catoni1999simulated}
\begin{equation}
 H_2:= \widetilde{\Gamma}(\mathcal{X} \setminus \{ x \}), \qquad x \in \text{argmin}_{x \in \mathcal{X}}G(x)
\end{equation}
 The critical depth $H_3$ is defined as \cite[Theorem 5.1]{catoni1999simulated}
\begin{equation}
H_3:=\widetilde{\Gamma}(\mathcal{X} \times \mathcal{X} \setminus F),
\end{equation}
 where $F=\{(x,x)| \, x \in \mathcal{X}\}$, $\widetilde{\Gamma}(\mathcal{X} \times \mathcal{X} \setminus F)=\max_{C \in \mathcal{M}(\mathcal{X} \times \mathcal{X} \setminus F)}\Gamma(C)$ and $\mathcal{M}(\mathcal{X} \times \mathcal{X}\setminus F)=\{C \in \mathcal{C}(\mathcal{X}) \,\,| \,\, C$ maximal cycle by inclusion under the constraint $C \subseteq \mathcal{X} \times \mathcal{X}\}$.
 Through the equivalence of two definitions of cycles, given by \cite[Prop. 3.10]{cirillo2015metastability}, the critical depth $H_2$ is equal to $\Tilde{\Gamma}(\mathcal{X} \setminus\{s\})$. This quantity is well defined because its value is independent of the choice of $s$ \cite[Theorem 5.1]{catoni1999simulated}. Now we consider two independent Markov chains, $X_t$ and $Y_t$, on the same energy landscape and with the same inverse temperature $\beta$. We define the two dimensional Markov chain $\{(X_t,Y_t)\}$ on $\mathcal{X} \times \mathcal{X}$ with transition probabilities $\mathcal{P}_{\beta}^{\otimes 2}$ given by
\begin{equation}
    \mathcal{P}_{\beta}^{\otimes 2}\Big( (x,y)(\tilde x,\tilde y)\Big)=
     \mathcal{P}_{\beta}(x,\tilde x)\mathcal{P}_{\beta}(y,\tilde y) \qquad \forall \, (x,y),(\tilde x,\tilde y) \in \mathcal{X}\times\mathcal{X}
\end{equation}
 So, using \cite[Theorem 5.1]{catoni1999simulated} and the assumption \eqref{P0}, the proof is concluded. \\
\end{proof}
Before proving the bounds \eqref{rocompreso}
\begin{equation*}
 c_1e^{-\beta(\activationenergym+\gamma_1)} \leq \rho_{\beta} \leq c_2e^{-\beta(\activationenergym-\gamma_2)},
\end{equation*}
 we recall the Definition \ref{dirichlet} and we define the \emph{generator} of a Markov process.
\begin{defn}
 For any function $f: \mathcal{X} \longrightarrow \mathbb{R}$, $\mathbb{L}_\beta f$ is the function defined as
\begin{equation}
     \mathbb{L}_\beta f(x):=\sum_{y \in \mathcal{X}}\mathcal{P}_\beta(x,y)[f(x)-f(y)]^2.
\end{equation}
\end{defn}
 The result \eqref{rocompreso} is an immediate consequence of the next two lemmas and it is obtained by generalizing \cite[Theorem 2.1, Lemma 2.3, Lemma 2.7]{holley1988simulated}.
\begin{lem}
 There exists a constant $C \leq \infty$ such that for all $\beta \geq 0$,
\begin{equation}
    \rho_\beta \leq C e^{- \beta (\activationenergym - \gamma)},
\end{equation}
 where $\gamma$ is a function of $\beta$ that vanishes for $\beta\to\infty$.
\end{lem}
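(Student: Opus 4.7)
The plan is to use the Rayleigh--Ritz variational characterization of the spectral gap for a reversible chain,
\begin{equation*}
\rho_\beta \;=\; \min_{f:\,\mathrm{Var}_{\mu_\beta}(f)>0} \frac{\mathscr{D}_\beta[f]}{\mathrm{Var}_{\mu_\beta}(f)},
\end{equation*}
so that any convenient test function yields an upper bound on $\rho_\beta$. The natural choice, in the spirit of \cite{holley1988simulated}, is an indicator $f=\mathbf{1}_A$, where $A$ is a deep cycle around a metastable state that avoids the stable state.

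Concretely, I would fix $m\in\mathcal{X}^m$, so that $V_m=\activationenergym$, and take
\begin{equation*}
A\;:=\;\{x\in\mathcal{X} \,:\, \Phi(m,x) < H(m)+\activationenergym\}.
\end{equation*}
Since $s$ is the ground state, $H(s)<H(m)$ and hence $s\in\mathcal{I}_m$; by the very definition $V_m=\Phi(m,\mathcal{I}_m)-H(m)$ this forces $s\notin A$. A standard min--max argument on $\Phi$ also yields $\max_{x,y\in A}\Phi(x,y) < H(m)+\activationenergym \le \Phi(A,\mathcal{X}\setminus A)$, so $A$ is an energy-cycle. Plugging $f=\mathbf{1}_A$ reduces the problem to bounding
\begin{equation*}
\mathscr{D}_\beta[\mathbf{1}_A]=\sum_{x\in A,\,y\notin A}\mu_\beta(x)\mathcal{P}_\beta(x,y),\qquad \mathrm{Var}_{\mu_\beta}(\mathbf{1}_A)=\mu_\beta(A)\mu_\beta(A^c).
\end{equation*}

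For the numerator, combining \eqref{Deltageneral} with \eqref{GBMgeneral} and the convergence $G\to H$ from \eqref{Hamgeneral} on the finite set $\mathcal{X}$ yields $\mu_\beta(x)\mathcal{P}_\beta(x,y)\le Z_\beta^{-1}e^{-\beta(H(x,y)-\gamma')}$ for some $\gamma'=\gamma'(\beta)\to 0$, where $H(x,y)=H(x)+\Delta(x,y)$ is the transition energy \eqref{transitionenergy} and $Z_\beta$ is the partition function. For $x\in A$ and $y\notin A$ the single-step path from $x$ to $y$ gives $H(x,y)\ge \Phi(x,y)\ge H(m)+\activationenergym$, so summing over at most $|\mathcal{X}|^2$ pairs,
\begin{equation*}
\mathscr{D}_\beta[\mathbf{1}_A]\le \frac{|\mathcal{X}|^2}{Z_\beta}\,e^{-\beta(H(m)+\activationenergym-\gamma')}.
\end{equation*}
For the denominator, $\mu_\beta(A)\ge \mu_\beta(m)\ge Z_\beta^{-1}e^{-\beta(H(m)+\gamma'')}$ for some $\gamma''\to 0$, while $\mu_\beta(A^c)\ge\mu_\beta(s)\to 1$ (since $\mu_\beta$ concentrates on the ground state), so $\mu_\beta(A^c)\ge c_0>0$ for $\beta$ large. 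The factor $Z_\beta$ cancels and one obtains
\begin{equation*}
\rho_\beta\le \frac{\mathscr{D}_\beta[\mathbf{1}_A]}{\mu_\beta(A)\mu_\beta(A^c)}\le C\,e^{-\beta(\activationenergym-\gamma)},
\end{equation*}
with $\gamma:=\gamma'+\gamma''$ vanishing as $\beta\to\infty$ and $C:=|\mathcal{X}|^2/c_0$.

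The hard part is not the conceptual structure but the careful bookkeeping of the several vanishing error terms: one $\gamma'$ coming from \eqref{Deltageneral}, another $\gamma''$ from the approximation of $G$ by $H$ in \eqref{Hamgeneral}, together with the need to make sure $Z_\beta$ cancels between numerator and denominator so that only $\activationenergym$ survives in the exponent. A secondary subtlety is verifying rigorously that $A$ is an energy-cycle containing $m$ but not $s$, which relies on $V_m=\activationenergym$ and on the fact that the ground state belongs to $\mathcal{I}_m$.
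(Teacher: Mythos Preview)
Your proof is correct and follows essentially the same route as the paper's: the variational characterization of $\rho_\beta$ together with an indicator test function supported on a suitable energy-cycle, bounding the Dirichlet form by the transition energy across the cycle's boundary and the variance by $\mu_\beta(m)\mu_\beta(s)$. The only cosmetic difference is that the paper takes the indicator of the cycle $\mathcal{R}_{\mathcal{X}\setminus\{x_0\}}(y_0)=\{y_0\}\cup\{x:\Phi(y_0,x)<\Phi(y_0,x_0)\}$ around the \emph{stable} state (normalizing $H(y_0)=0$ so that $Z_\beta$ is absorbed into the $\gamma$'s), whereas you take the cycle $A=\{x:\Phi(m,x)<H(m)+\Gamma_m\}$ around the \emph{metastable} state and track $Z_\beta$ explicitly until it cancels; both choices give exactly the same boundary estimate $H(x,y)\ge H(m)+\Gamma_m$ via the ultrametric property of $\Phi$, and the same variance lower bound $\mu_\beta(A)\mu_\beta(A^c)\ge\mu_\beta(m)\mu_\beta(s)$.
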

 \proof We first observe that by assumption $\activationenergym>0$. Without loss of generality, we may assume that $x_0 \in \mathcal{X}^m, y_0 \in \mathcal{X}^s$ and $H(y_0)=0$. Therefore $\activationenergym=\Phi(x_0,y_0)-H(x_0)$ since $\mathcal{X}$ is finite. We write the spectral gap $\rho_\beta$ as
\begin{equation}
 \rho_\beta=\inf_{f \in L^2(\mu)} \frac{- \sum_{x \in \mathcal{X}} f(x) \mathbb{L}_\beta f(x) \mu(x)}{\text{Var}_\beta(f)},
\end{equation}
 where $\text{Var}_\beta(f):=\sum_{x \in \mathcal{X}} f^2(x)\mu(x)-(\sum_{x \in \mathcal{X}} f(x)\mu(x))^2$, and $L^2$ is the space of functions with finite second moment under the measure $\mu$. We will find a function $F$ and a constant $C<\infty$, such that
\begin{equation}
     \frac{- \sum_{x \in \mathcal{X}} F(x) \mathbb{L}_\beta F(x) \mu(x)}{\text{Var}_\beta(F)} \leq C e^{- \beta (\activationenergym - \gamma)}.
\end{equation}
 Let $x_0 \in \mathcal{X}$ and $y_0 \in \mathcal{I}_{x_0}$ be two points for which $\Phi(x_0,y_0)-H(x_0)=\activationenergym$ and let us consider the set $\mathcal{R}_{\mathcal{X} \setminus \{x_0\}}(y_0)=\{y_0\} \cup \{x \in \mathcal{X} \,\, | \,\, \Phi(y_0,x)< \Phi(y_0,x_0)\}$. Note that $x_0 \not\in \mathcal{R}_{\mathcal{X} \setminus \{x_0\}}(y_0)$ and $y_0 \in \mathcal{R}_{\mathcal{X} \setminus \{x_0\}}(y_0)$. Moreover if $x \in \mathcal{R}_{\mathcal{X} \setminus \{x_0\}}(y_0)$ and $y \not\in \mathcal{R}_{\mathcal{X} \setminus \{x_0\}}(y_0)$, then
\begin{equation}\label{HDP}
H(y)+\Delta(y,x) \geq \Phi(y_0,x_0).
\end{equation}
\begin{figure}[htb!]
    \centering
    \includegraphics{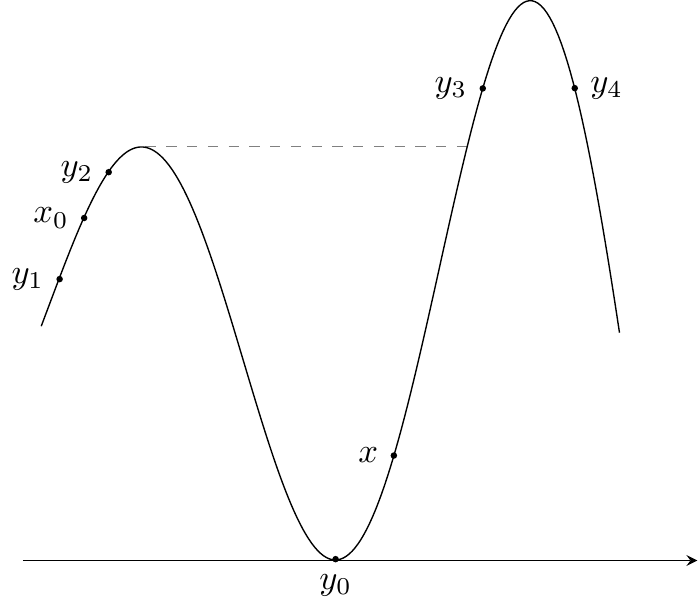}
     \caption{In this figure we draw an example energy-landscape, compatible with the assumptions on $x_0,y_0$ and $x$. We also draw four $y_i \not\in \mathcal{R}_{\mathcal{X} \setminus \{x_0\}}(y_0)$, $i=1,2,3,4$, for which \eqref{HDP} is valid.}
    \label{figure: exampleenergycycle}
\end{figure}
 For any $x \in  \mathcal{R}_{\mathcal{X} \setminus \{x_0\}}(y_0)$ and $y \not\in  \mathcal{R}_{\mathcal{X} \setminus \{x_0\}}(y_0)$, by reversibility we have
\begin{align}\label{eqdis}
   \mathcal{P}_\beta(x,y)\mu(x)=\mathcal{P}_\beta(y,x)\mu(y) & = e^{-\beta(-\frac{\log \mathcal{P}_\beta(y,x)}{\beta}-\frac{\log{\mu(y)}}{\beta})} \leq e^{-\beta(\Delta(y,x)+H(y)-\gamma^*_1)},
\end{align}
 where, to obtain the inequality, the first term is estimated by \eqref{Deltageneral1} and \cite[Equation (2.2)]{cirillo2015metastability}, i.e.,
\begin{equation}
     -\frac{\log \mathcal{P}_\beta(y,x)}{\beta} \geq \Delta(y,x)-\tilde\gamma_1.
\end{equation}
 The second term in \eqref{eqdis} is estimated by \eqref{Hamgeneral} and \eqref{GBMgeneral}, that is
\begin{equation}\label{gammino2}
-\frac{\log{\mu(y)}}{\beta} \geq H(y)-\tilde\gamma_2,
\end{equation}
 where $\tilde\gamma_1, \tilde\gamma_2$ and $\gamma^*_1=\tilde\gamma_1+\tilde\gamma_2$ are functions of $\beta$ that vanish for $\beta\to\infty$. Then using \eqref{HDP} we get
\begin{align}
   e^{-\beta(\Delta(y,x)+H(y)-\gamma^*_1)} \leq e^{-\beta \Phi(x_0,y_0)}e^{\beta \gamma^*_1}.
\end{align}
 Let $F(x)=\mathbb{1}_{\mathcal{R}_{\mathcal{X} \setminus \{x_0\}}(y_0)}(x)$, then
\begin{align}
     - \sum_{x \in \mathcal{X}} F(x) \mathbb{L}_\beta F(x) \mu(x) &= \frac{1}{2}\sum_{x,y \in \mathcal{X}} \mu(x)\mathcal{P}_\beta(x,y)[F(x)-F(y)]^2 \notag \\
     & \leq \sum_{\substack{x \in \mathcal{R}_{\mathcal{X} \setminus \{x_0\}}(y_0) \\ y \not \in \mathcal{R}_{\mathcal{X} \setminus \{x_0\}}(y_0)}} e^{-\beta(\Phi(x_0,y_0))}e^{\beta \gamma^*_1}.
\end{align}
On the other hand,
\begin{align}
     \text{Var}_\beta(f)=\mu(\mathcal{R}_{\mathcal{X} \setminus \{x_0\}}(y_0))\mu(\mathcal{R}_{\mathcal{X} \setminus \{x_0\}}(y_0)^c) & \geq \frac{e^{-\beta G(y_0)}}{Z}\frac{e^{-\beta G(x_0)}}{Z} \notag \\
     & \geq e^{-\beta (H(y_0)+\tilde\gamma_2)}e^{-\beta (H(x_0)+\tilde\gamma_2)} \notag \\
    & = e^{-\beta (H(x_0)+2\tilde\gamma_2)},
\end{align}
 where the last inequality is obtained by \eqref{gammino2}, and by our assumption $H(y_0)=0$. We conclude that
\begin{align*}
\rho_{\beta} \leq C e^{-\beta(\activationenergym-\gamma)}
\end{align*}
where $C$ is a constant and $\gamma=\gamma^*_1+2\tilde\gamma_2$.
\begin{lem}
There exists a constant $C>0$, such that for all $\beta \geq 0$,
\begin{equation}\label{ro2}
    \rho_\beta \geq C e^{- \beta (\activationenergym + \gamma)},
\end{equation}
 where $\gamma$ is a function of $\beta$ that vanishes for $\beta\to\infty$.
\end{lem}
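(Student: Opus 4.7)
The plan is to establish a Poincar\'e inequality of the form $\text{Var}_\beta(f)\leq C^{-1} e^{\beta(\activationenergym+\gamma)}\mathscr{D}_\beta[f]$ for all $f:\mathcal{X}\to\mathbb{R}$, with a constant $C>0$ and a function $\gamma=\gamma(\beta)\to 0$ as $\beta\to\infty$; the bound \eqref{ro2} then follows immediately from the variational characterization $\rho_\beta=\inf_f\mathscr{D}_\beta[f]/\text{Var}_\beta(f)$. To prove the Poincar\'e inequality I would use the canonical-paths method. For each ordered pair of distinct configurations $(x,y)$, fix once and for all a canonical path $\omega_{xy}\in\Theta(x,y)$ that realizes the communication height, i.e., $\Phi_{\omega_{xy}}=\Phi(x,y)$; such a path exists by finiteness of $\mathcal{X}$ and irreducibility of $\Delta$.

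Starting from the identity $\text{Var}_\beta(f)=\tfrac12\sum_{x,y}\mu_\beta(x)\mu_\beta(y)[f(x)-f(y)]^2$, writing $f(x)-f(y)$ as a telescoping sum along $\omega_{xy}$, applying Cauchy--Schwarz, and swapping the order of summation reduces the proof to the edge estimate
\begin{equation}\label{planedge}
\sum_{(x,y):(u,v)\in\omega_{xy}}|\omega_{xy}|\,\mu_\beta(x)\mu_\beta(y)\leq C^{-1}\,e^{\beta(\activationenergym+\gamma)}\,\mu_\beta(u)\mathcal{P}_\beta(u,v),
\end{equation}
to be shown for every directed edge $(u,v)$ lying on some canonical path. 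I would establish \eqref{planedge} from three ingredients: (i) the bound $\mu_\beta(z)\leq|\mathcal{X}^s|^{-1}e^{-\beta(H(z)-H^*-\gamma_1)}$, where $H^*:=\min_x H(x)$, deduced from \eqref{GBMgeneral}--\eqref{Hamgeneral} and the elementary asymptotics $-\beta^{-1}\log Z_\beta\to H^*$; (ii) the lower bound $\mu_\beta(u)\mathcal{P}_\beta(u,v)\geq |\mathcal{X}|^{-1}e^{-\beta(H(u,v)-H^*+\gamma_2)}$, obtained by combining \eqref{transitionenergy} and \eqref{Deltageneral} with the same asymptotics of $Z_\beta$; and (iii) the inequality $H(u,v)\leq\Phi_{\omega_{xy}}=\Phi(x,y)$, which is immediate from the definitions \eqref{Phimax}--\eqref{Phimin} and our choice of canonical path.

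The main obstacle is the geometric estimate
\begin{equation}\label{plangeom}
\Phi(x,y)\leq\max\{H(x),H(y)\}+\activationenergym\quad\text{for all }x,y\in\mathcal{X},
\end{equation}
which is what allows the right-hand side of \eqref{planedge} to be controlled uniformly in $(x,y)$. I would prove \eqref{plangeom} by iteration: starting from any $z\notin\mathcal{X}^s$ one has $V_z\leq\activationenergym$ by definition of $\activationenergym$, so there is a path from $z$ to some $z'\in\mathcal{I}_z$ of height at most $H(z)+\activationenergym$; iterating with $z'$ in place of $z$ until a ground state is reached produces a path from $z$ to $\mathcal{X}^s$ whose maximum energy is at most $H(z)+\activationenergym$, since the energies along the sequence of starting points are strictly decreasing. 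Applying this construction to both $x$ and $y$ and concatenating the two descent paths at a common ground state yields a path in $\Theta(x,y)$ of height at most $\max\{H(x),H(y)\}+\activationenergym$, which proves \eqref{plangeom}. Plugging (i)--(iii) together with \eqref{plangeom} into the ratio $\mu_\beta(x)\mu_\beta(y)/[\mu_\beta(u)\mathcal{P}_\beta(u,v)]$ gives an upper bound of $|\mathcal{X}|\,e^{\beta(H(u,v)-H(x)-H(y)+H^*+\gamma_1+\gamma_2)}\leq|\mathcal{X}|\,e^{\beta(\activationenergym+\gamma')}$, using that $H(u,v)-H(x)-H(y)+H^*\leq\max\{H(x),H(y)\}-H(x)-H(y)+H^*+\activationenergym\leq\activationenergym$. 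The remaining polynomial prefactors $|\omega_{xy}|\leq|\mathcal{X}|$ and the at most $|\mathcal{X}|^2$ terms in the sum on the left of \eqref{planedge} are absorbed into $\gamma$ since $\beta^{-1}\log|\mathcal{X}|\to 0$, completing the proof. Beyond the geometric estimate, the only subtlety is careful bookkeeping of the various vanishing error functions $\gamma_i(\beta)$ and their amalgamation into a single $\gamma\to 0$.
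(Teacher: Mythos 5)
Your proposal follows essentially the same route as the paper's proof: a canonical-path argument (telescoping sum along paths, Cauchy--Schwarz, reordering of the summation) comparing the variance with the Dirichlet form, whose key input is precisely the bound $\Phi(x,y)\le\max\{H(x),H(y)\}+\Gamma_m$ that the paper invokes in the equivalent form $\Phi(x,y)-H(x)-H(y)\le\Gamma_m$ (after normalizing $\min_{\mathcal{X}}H=0$) without proof, and your version is if anything more careful, since you choose paths realizing the communication height and compare edge terms directly with $\mu_\beta(u)\mathcal{P}_\beta(u,v)$ rather than with $e^{-\beta\Phi(u,v)}$. The one point to watch is the phrase ``concatenating the two descent paths at a common ground state'': when $|\mathcal{X}^s|>1$ the two descents may terminate in different ground states, and joining them within height $\max\{H(x),H(y)\}+\Gamma_m$ requires that degenerate ground states communicate below $\Gamma_m$ --- an assumption the paper's own proof uses implicitly in the same step, so your argument stands on the same footing as the paper's.
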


 \proof It will be enough to find a constant $c>0$ such that for every $\beta \geq 0$ and every $f\in L^2(\mu)$,
\begin{equation}
     \frac{- \sum_{x \in \mathcal{X}} f(x) \mathbb{L}_\beta f(x) \mu(x)}{\text{Var}_\beta(F)} \geq C e^{- \beta (\activationenergym + \gamma)}.
\end{equation}
 We consider $x, y \in \mathcal{X}$ and $\omega \in \Theta(x,y)$ with length $|\omega|=n(x,y)$ and define
\begin{equation}\label{N}
    N:=\max_{x,y \in \mathcal{X}}n(x,y).
\end{equation}
 For $z \in \mathcal{X},w \in \mathcal{I}_z$, we define the function $\mathbb{F}_{(z,w)}: \Theta(x,y) \longrightarrow \{0,1\}$ as
\begin{equation}\label{F}
    \mathbb{F}_{(z,w)}(\omega):=
\bigg \{
\begin{array}{rl}
 1 & \text{if $\omega_i=z$ and $\omega_{i+1}=w$ for some $0 \leq i < n(x,y)$}, \\
0 & \text{otherwise}. \\
\end{array}
\end{equation}
Then,
\begin{align*}
     2\text{Var}_{\beta}(f) & =\sum_{x,y \in \mathcal{X}}(f(y)-f(x))^2\mu(y)\mu(x)= \sum_{x,y \in \mathcal{X}}\Bigg(\sum_{i=1}^{n(x,y)} f(\omega_i)-f(\omega_{i-1})\Bigg)^2\mu(y)\mu(x),
    \end{align*}
  where in the last equality we use that $\omega \in \Theta(x,y)$ with $|\omega|=n(x,y)$ and we wrote $f(y)-f(x)$ as a telescopic sum. Using \eqref{N} and \eqref{F}, we get the following inequalities
\begin{align}\label{2dis1}
     \sum_{x,y \in \mathcal{X}}\Bigg(\sum_{i=1}^{n(x,y)} f(\omega_i)-f(\omega_{i-1})\Bigg)^2\mu(x)\mu(y) & \leq \sum_{x,y \in \mathcal{X}} n(x,y)\sum_{i=1}^{n(x,y)} (f(\omega_i)-f(\omega_{i-1}))^2\mu(x)\mu(y) \notag \\
     & \leq N \sum_{x,y \in \mathcal{X}} \sum_{z,w \in \mathcal{X}} \mathbb{F}_{(z,w)}(\omega)(f(w)-f(z))^2\mu(x)\mu(y).
    \end{align}
    We estimate $\mu(x)\mu(y)$ as in \eqref{gammino2},
    \begin{equation}
     \mu(x)\mu(y)=e^{-\beta(-\frac{\log(\mu(x))}{\beta}-\frac{\log(\mu(y))}{\beta})} \leq e^{-\beta (H(x)+H(y)-2\tilde\gamma_2)}.
    \end{equation}
    Then we have
    \begin{align}\label{2dis2}
     & N \sum_{x,y \in \mathcal{X}} \sum_{z,w \in \mathcal{X}} \mathbb{F}_{(z,w)}(\omega)(f(w)-f(z))^2\mu(x)\mu(y) \notag \\
     & \leq N \sum_{x,y \in \mathcal{X}} \sum_{z,w \in \mathcal{X}} \mathbb{F}_{(z,w)}(\omega)(f(w)-f(z))^2 e^{-\beta \Phi(z,w)} \frac{e^{-\beta (H(x)+H(y)-2\tilde\gamma_2)}}{e^{-\beta \Phi(z,w)}} \notag \\
     & \leq N \Big(\max_{z,w} \sum_{x,y \in \mathcal{X}} \mathbb{F}_{(z,w)}(\omega) \frac{e^{-\beta (H(x)+H(y)-2\tilde\gamma_2)}}{e^{-\beta \Phi(z,w)}}\Big) \sum_{u,v \in \mathcal{X}} (f(v)-f(u))^2 e^{-\beta \Phi(u, v)}.
\end{align}
 Moreover
\begin{align}\label{2dis3}
     \mathbb{F}_{(z,w)}(\omega) \frac{e^{-\beta (H(x)+H(y)-2\tilde\gamma_2)}}{e^{-\beta \Phi(z,w)}} & = \mathbb{F}_{(z,w)}(\omega) e^{\beta(\Phi(z,w)-H(x)-H(y)+2\tilde\gamma_2)} \notag \\
     & \leq \mathbb{F}_{(z,w)}(\omega) e^{\beta(\Phi(x,y)-H(x)-H(y)+2\tilde\gamma_2)} \notag \\
     & \leq \mathbb{F}_{(z,w)}(\omega) e^{\beta{(\activationenergym+2\tilde\gamma_2)}}.
\end{align}
 The result \eqref{ro2} follows from \eqref{2dis1}, \eqref{2dis2}, \eqref{2dis3}.

\section{Proof of model-dependent results}\label{pmdr}
 In Section \ref{otherproofs} we prove the main model-dependent results except for Lemma \ref{EST}, which we postpone to Section \ref{Lemmamain}.

 \subsection{Proof of Theorem \ref{IdMS}, Theorem \ref{RP}, Theorem \ref{ThE}}\label{otherproofs}
 Note that our PCA verifies \cite[Definition 2.1]{cirillo2013relaxation}. In order to prove Theorem \ref{IdMS} we will lean on \cite[Theorem 2.4]{cirillo2013relaxation} (see Appendix \ref{appendixA}). Roughly speaking, if we have an ansatz for the set of metastable configurations and one for the communication height, and we show that these verify two conditions, then \cite[Theorem 2.4]{cirillo2013relaxation} guarantees that the anzatzes are correct.
 \begin{proof}[Proof of Theorem \ref{IdMS} (Identification of metastable states)] In \cite{cirillo2003metastability} the authors computed the value of $\Gamma$ to be $\activationenergy=-2h\lambda^2+2\lambda(4+h)-2h$. There, it was also proven that
\begin{align}
 \Phi(\underline{-1},\underline{+1})-H(\underline{-1})=\activationenergy, \\
\Phi(\underline{c},\underline{+1})-H(\underline{c})=\activationenergy.
\end{align}
 By \cite[Lemma 3.4, Lemma 4.1]{cirillo2003metastability} we have that $\Phi(-\underline{1},\underline{c})=\activationenergy+H(-\underline{1})$, that is $\activationenergy+H(-\underline{1})$ is the minmax between $-\underline{1}$ and $\underline{c}$.
 The first assumption of \cite[Theorem 2.4]{cirillo2013relaxation} is satisfied for $A=\{\underline{-1},\underline{c}\}$ and $a=\activationenergy$ thanks to \cite[Theorem 3.11, Lemma 3.4, Lemma 4.1]{cirillo2003metastability}, hence
\begin{equation}
     \Phi(\sigma, \mathcal{X}^s)-H(\sigma)=\activationenergy \text{ for all } \sigma\in \{\underline{-1},\underline{c}\}.
\end{equation}
 Moreover, the second assumption of \cite[Theorem 2.4]{cirillo2013relaxation} is satisfied because by Lemma \ref{EST} either $\mathcal{X}\setminus (\{\underline{-1},\underline{c}\} \cup \mathcal{X}^s)=\emptyset$ or
\begin{equation}
 V_\sigma<\activationenergy \text { for all } \sigma\in \mathcal{X}\setminus (\{\underline{-1},\underline{c}\} \cup \mathcal{X}^s).
\end{equation}
 Finally, by applying \cite[Theorem 2.4]{cirillo2013relaxation}, we conclude that $\activationenergym=\activationenergy$ and $\mathcal{X}^m=\{\underline{-1}, \underline{c}\}$.
\end{proof}
 \begin{proof}[Proof of Theorem \ref{RP} (Recurrence property)] In Lemma \ref{EST} we compute $V^*=2(2-h)$. Recall the definition of $\mathcal{X}_V$ in \eqref{Xv} and apply \cite[Prop. 2.8]{cirillo2015metastability} with $a=V^*$, $\mathcal{X}_{V^*}=\{\underline{-1},\underline{c}, \underline{+1}\}$. We get
\begin{equation}
 \beta \mapsto \sup_{\eta\in \mathcal{X}}\mathbb{P}_{\eta}(\tau_{\mathcal{X}^m\cup\mathcal{X}^s}>e^{\beta (V^{*}+\epsilon)}) \qquad \text{is SES.}
\end{equation}
 With a similar reasoning with $a=\activationenergym$, $\mathcal{X}_{\activationenergym}=\mathcal{X}^s$, we get
\begin{equation}
 \beta \mapsto \sup_{\eta\in \mathcal{X}}\mathbb{P}_{\eta}(\tau_{\mathcal{X}^s}>e^{\beta (\activationenergym +\epsilon)}) \qquad \text{is SES.}
\end{equation}
\end{proof}
 \begin{proof}[Proof of Theorem \ref{ThE}] In \cite{cirillo2016sum} the proof of \cite[Theorem 3.1]{cirillo2016sum} was only sketched in Section 4. Recall Theorem \ref{t:addition02}, then Condition~\ref{t:series00} is satisfied thanks to our Theorem \ref{IdMS}, , Condition~\ref{t:series01} is satisfied thanks to \cite[Lemma 3.3, Lemma 3.4]{cirillo2016sum} and Condition~\ref{t:series02} is satisfied thanks to \cite[Lemma 3.5]{cirillo2016sum}. Thus, applying Theorem \ref{t:addition02} concludes the rigorous proof of \eqref{valoreatteso}.
 In the second example of Section \ref{examples} we verify the assumptions of Theorem \ref{GM} and Theorem \ref{GMM} for general reversible PCA model in order to get \eqref{lim2PCA} and \eqref{rocompresopca}.
\end{proof}

\subsection{Proof of main Lemma \ref{EST}}\label{Lemmamain}

\begin{defn}\label{stablepair}
 We call \emph{stable configurations} those configurations $\sigma \in \mathcal{X}$ such that $p(\sigma,\sigma)\rightarrow 1$ in the limit $\beta \rightarrow \infty$. Equivalently, $\sigma \in \mathcal{X}$ is a stable configuration if and only if $p(\sigma,\eta)\rightarrow 0$ in the limit $\beta \rightarrow \infty$ for all $\eta \in \mathcal{X}\setminus \{ \sigma \}$.
\end{defn}

 For any $\sigma\in \mathcal{X}$ there exists a unique configuration $\eta\in \mathcal{X}$ such that the transition $\sigma \rightarrow \eta$ happens with high probability as $\beta \rightarrow \infty$, that is $p(\sigma, \eta) \overset{\beta \rightarrow \infty}{\longrightarrow} 1$. So let $\eta$ and $\sigma$ be two configurations in $\mathcal{X}$ such that $\eta=T\sigma$, where
\[
\begin{aligned}
T\colon \mathcal{X} &\to \mathcal{X}\\
\sigma &\mapsto T\sigma
\end{aligned}
\]
is the map such that for each $x\in \Lambda$
$$T\sigma (x) =
\bigg \{
\begin{array}{rl}
 \sigma^x(x) & \text{if} \, \, \, p_x(\sigma^x(x)|\sigma)\overset{\beta \rightarrow \infty}{\longrightarrow} 1 \\
 \sigma(x) & \text{if} \, \, \, p_x(\sigma^x(x)|\sigma)\overset{\beta \rightarrow \infty}{\longrightarrow} 0\\
\end{array}
$$
\begin{defn}
 Let $\sigma, \eta \in \mathcal{X}$ be two different configurations. We say that $\sigma$ and $\eta$ form a stable pair if and only if $\eta= T \sigma$ and $T \eta=\sigma$. Moreover, we say that $\sigma \in \mathcal{X}$ is a trap if either $\sigma$ is a stable configuration or the pair $(\sigma,T\sigma)$ is a stable pair. We denote by $\mathcal{T} \subset\mathcal{X}$ the collection of all traps.
\end{defn}

  We define two further maps, that will be useful later on. For any given $j\in \Lambda$, $T_j^{F}(\sigma)=T(\sigma)$ except in the site $j$, where $T_j^{F}(\sigma)=\sigma(j)$. Formally,
     \begin{equation}
    T_j^{F}\sigma (i) =
    \begin{cases}
     \sigma_{\mathcal{X} \setminus \{j\}}^i(i) & \text{if} \, \, \, p_i(\sigma^i(i)|\sigma)\overset{\beta \rightarrow \infty}{\longrightarrow} 1, \\
     \sigma_{\mathcal{X} \setminus \{j\}}(i) & \text{if} \, \, \, p_i(\sigma^i(i)|\sigma)\overset{\beta \rightarrow \infty}{\longrightarrow} 0,\\
    \sigma(j) & \text{if} \, \, \, i=j.
    \end{cases}
    \end{equation}
     For any given $j\in \Lambda$, $T_j^{C}(\sigma)=T(\sigma)$ except in the site $j$, where $T_j^{C}(\sigma)=-\sigma(j)$. Formally,
    \begin{equation}
    T_j^{C}\sigma (i) =
    \begin{cases}
     \sigma_{\mathcal{X} \setminus \{j\}}^i(i) & \text{if} \, \, \, p_i(\sigma^i(i)|\sigma)\overset{\beta \rightarrow \infty}{\longrightarrow} 1, \\
     \sigma_{\mathcal{X} \setminus \{j\}}(i) & \text{if} \, \, \, p_i(\sigma^i(i)|\sigma)\overset{\beta \rightarrow \infty}{\longrightarrow} 0,\\
    -\sigma(j) & \text{if} \, \, \, i=j.
    \end{cases}
    \end{equation}
 The two maps are similar to $T(\sigma)$, the only difference being that $T_j^{F}(\sigma)$ fixes the value of the spin in $j$ and $T_j^{C}(\sigma)$ changes the value of the spin in $j$.

 We say that $x,y \in \Lambda$ are \emph{nearest neighbors} if and only if the lattice distance $d$ between $x,y$ is one, i.e., $d(x,y)=1$. We indicate by $R_{l,m} \subseteq \Lambda$ the rectangle with sides $l$ and $m$, $2 \le l \le m$ and we call \emph{non-interacting rectangles} two rectangles $R_{l,m}$ and $R_{l',m'}$ such that any of the following conditions hold:
\begin{itemize}
     \item $d(R_{l,m},R_{l',m'})\geq 3$, if $\sigma_{R_{l,m}}=\underline{c}^o_{R_{l,m}}$ and $\sigma_{R_{l',m'}}=\underline{c}^o_{R_{l',m'}}$;
     \item $d(R_{l,m},R_{l',m'})\geq 3$, if $\sigma_{R_{l,m}}=\underline{c}^e_{R_{l,m}}$ and $\sigma_{R_{l',m'}}=\underline{c}^e_{R_{l',m'}}$;
     \item $d(R_{l,m},R_{l',m'})\geq 3$, if $\sigma_{R_{l,m}}=\underline{+1}_{R_{l,m}}$ and $\sigma_{R_{l',m'}}=\underline{+1}_{R_{l',m'}}$;
     \item $d(R_{l,m},R_{l',m'})=1$, if $\sigma_{R_{l,m}}=\underline{c}^o_{R_{l,m}}$ and $\sigma_{R_{l',m'}}=\underline{c}^e_{R_{l',m'}}$;
     \item $d(R_{l,m},R_{l',m'})=1$, if $\sigma_{R_{l,m}}=\underline{c}_{R_{l,m}}$, $\sigma_{R_{l',m'}}=\underline{+1}_{R_{l',m'}}$ and the sides on the interface are of the same length.
\end{itemize}
 Whenever two rectangles are not \emph{non-interacting}, we call them \emph{interacting}.
\begin{proof}[Proof of Lemma \ref{EST}]
 We begin by giving a rough sketch of the proof. Without loss of generality, we consider only configurations in $\mathcal{U}:=\mathcal{X}_0\setminus\{\underline{-1},\underline{c},\underline{+1}\}$, since the configurations in $\mathcal{X} \setminus \mathcal{X}_0$ have stability level zero. Indeed, if $\sigma \in \mathcal{X} \setminus \mathcal{X}_0$, we construct the path $\overline{\omega}=(\sigma, T(\sigma))$, so that $T(\sigma) \in \mathcal{I}_\sigma$ and $V_\sigma=0$, where $\mathcal{I_{\sigma}}$ was defined in \eqref{I}. We will partition $\mathcal{X}_0\setminus\{\underline{-1},\underline{c},\underline{+1}\}$ into several subsets $A,B,D,E$ and for each of these we will construct a path $\overline\omega \in \Theta(\sigma,\mathcal{I_{\sigma}} \cap \mathcal{X}_0)$. Denote with $\sigma_{\Lambda'}$ a configuration $\sigma \in \Lambda' \subseteq \Lambda$. We will find an explicit upper-bound $V^*_{\sigma}$ on the transition energy along $\overline{\omega}$ as
\begin{equation}\label{maxH}
 \max_{k=1,...,|\overline{\omega}|-1}H(\omega_k,\omega_{k+1})-H(\sigma) \le V^*_{\sigma}.
\end{equation}
We define
\begin{equation}\label{maxV}
V^*_S=\max_{\sigma \in S}V^*_{\sigma}, \qquad S \in \{A,B,D,E\},
\end{equation}
and since
\begin{equation}
    \max_{S \in \{A,B,D,E\}} V^*_S< \activationenergy,
\end{equation}
 from \eqref{maxH} and \eqref{maxV} follows that, for any $\sigma \in \mathcal{X}_0 \setminus \{\underline{-1}, \underline{c}, \underline{+1}\}$,
\begin{equation}
 \Phi(\sigma, \mathcal{I_{\sigma}})-H(\sigma)=\min_{\omega \in \Theta(\sigma, \eta)} \max_{i=1,...,|\omega| -1} H(\omega_i, \omega_{i+1}) -H(\sigma) < \activationenergy.
\end{equation}
 This means that all configurations in $\mathcal{X}_0\setminus\{\underline{-1},\underline{c},\underline{+1}\}$ have a lower stability level than $\activationenergy$. We now proceed with the detailed proof. We partition the set $\mathcal{X}_0\setminus\{\underline{-1},\underline{c},\underline{+1}\}$ into four subset as  $\mathcal{X}_0\setminus\{\underline{-1},\underline{c},\underline{+1}\}=A \cup B \cup D \cup E$ \cite[Prop. 3.3]{cirillo2003metastability}. For each set $A,B,D,E$, we first describe it in words and then give its formal definition.

 We define the set $A$ to be the set of configurations consisting of a single rectangle containing either $\underline{c}$ or $\underline{+1}$, and surrounded by either $\underline{c}$ or $\underline{-1}$, see Figure \ref{figure: Aa}. More precisely, $A=A_1 \cup A_2 \cup A_3 \cup A_4 \cup A_5 \cup A_6$, where:
    \begin{itemize}
     \item $A_1$ is the collection of configurations such that $\exists! \, R_{l,m} \subset \Lambda$ with $l<\lambda$, $\sigma_{R_{l,m}}=\underline{c}_{R_{l,m}}$ and $\sigma_{\Lambda \setminus R_{l,m}}=\underline{-1}_{\Lambda \setminus R_{l,m}}$;
     \item $A_2$ is the collection of configurations such that $\exists! \, R_{l,m}\subset \Lambda$ with $l \ge \lambda$, $\sigma_{R_{l,m}}=\underline{c}_{R_{l,m}}$ and $\sigma_{\Lambda \setminus R_{l,m}}=\underline{-1}_{\Lambda \setminus R_{l,m}}$;
     \item $A_3$ is the collection of configurations such that $\exists! \, R_{l,m}\subset \Lambda$ with $l<\lambda$, $\sigma_{R_{l,m}}=\underline{+1}_{R_{l,m}}$ and $\sigma_{\Lambda \setminus R_{l,m}}=\underline{c}_{\Lambda \setminus R_{l,m}}$;
     \item $A_4$ is the collection of configurations such that $\exists! \, R_{l,m}\subset \Lambda$ with $l \ge \lambda$, $\sigma_{R_{l,m}}=\underline{+1}_{R_{l,m}}$ and $\sigma_{\Lambda \setminus R_{l,m}}=\underline{c}_{\Lambda \setminus R_{l,m}}$;
     \item $A_5$ is the collection of configurations such that $\exists! \, R_{l,m}\subset \Lambda$ with $l<\lambda$, $\sigma_{R_{l,m}}=\underline{+1}_{R_{l,m}}$ and $\sigma_{\Lambda \setminus R_{l,m}}=\underline{-1}_{\Lambda \setminus R_{l,m}}$;
     \item $A_6$ is the collection of configurations such that $\exists! \, R_{l,m}\subset \Lambda$ with $l \ge \lambda$, $\sigma_{R_{l,m}}=\underline{+1}_{R_{l,m}}$ and $\sigma_{\Lambda \setminus R_{l,m}}=\underline{-1}_{\Lambda \setminus R_{l,m}}$.
    \end{itemize}
    \begin{figure}[!htb]
        \centering
        \includegraphics[scale=0.8]{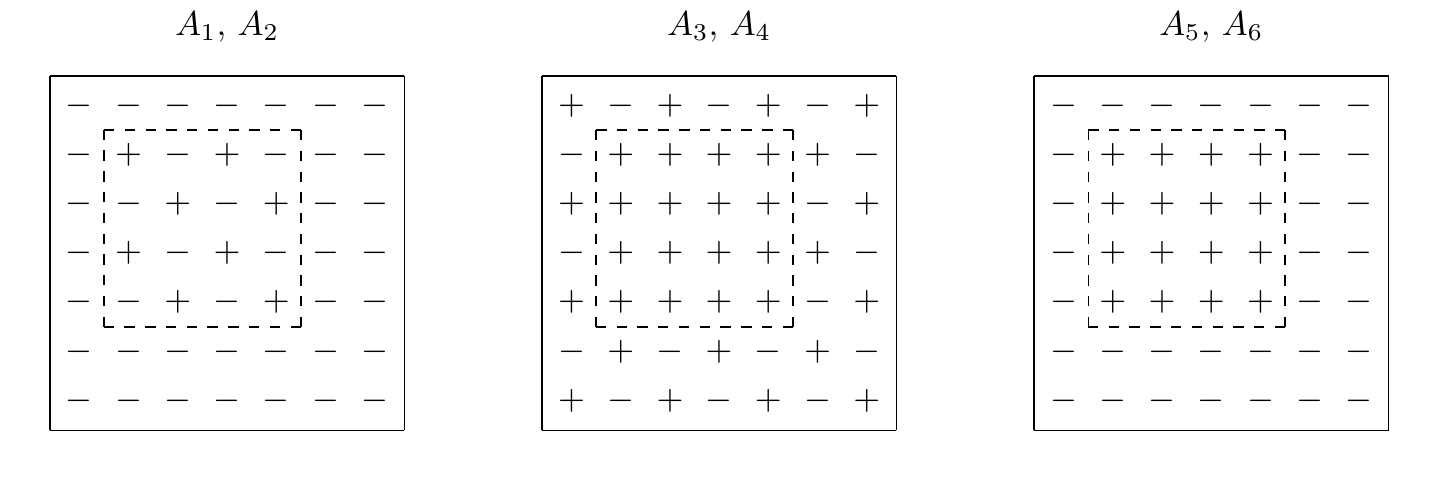}
        \caption{Examples of one configurations in $A$.}
        \label{figure: Aa}
    \end{figure}
 Configurations in the set $B$ consist of a single chessboard rectangle which may contain an island of $\underline{+1}$, surrounded by $\underline{-1}$, see Figure \ref{figure: Bb}. More precisely, $B=B_1 \cup B_2 \cup B_3$, where:
    \begin{itemize}
     \item $B_1$ is the collection of configurations such that $\exists! \, R_{l,m}$ with $\sigma_{R_{l,m}}=\underline{+1}_{R_{l,m}}$ and $\exists! \, R_{l',m'} \supsetneq R_{l,m}$ with $l'<\lambda$, $\sigma_{R_{l',m'} \setminus R_{l,m}}=\underline{c}_{R_{l',m'} \setminus R_{l,m}}, \,\,\, \sigma_{\Lambda \setminus R_{l',m'}}=\underline{-1}_{\Lambda \setminus R_{l',m'}}$;
     \item $B_2$ is the collection of configurations such that $\exists! \, R_{l,m}$ with $l \ge \lambda$, $\sigma_{R_{l,m}}=\underline{+1}_{R_{l,m}}$ and $\exists! \, R_{l',m'} \supsetneq R_{l,m}$ such that $\sigma_{R_{l',m'} \setminus R_{l,m}}=\underline{c}_{R_{l',m'} \setminus R_{l,m}}, \,\,\, \sigma_{\Lambda \setminus R_{l'-,m'}}=\underline{-1}_{\Lambda \setminus R_{l',m'}}$;
     \item $B_3$ is the collection of configurations such that $\exists! \, R_{l,m}$ with $l < \lambda$, $\sigma_{R_{l,m}}=\underline{+1}_{R_{l,m}}$ and $\exists! \, R_{l',m'} \supsetneq R_{l,m}$ with $l' \ge \lambda$ such that $\sigma_{R_{l',m'} \setminus R_{l,m}}=\underline{c}_{R_{l',m'} \setminus R_{l,m}}, \,\,\, \sigma_{\Lambda \setminus R_{l',m'}}=\underline{-1}_{\Lambda \setminus R_{l',m'}}$.
    \end{itemize}
    \begin{figure}[htb!]
        \centering
        \includegraphics[scale=0.8]{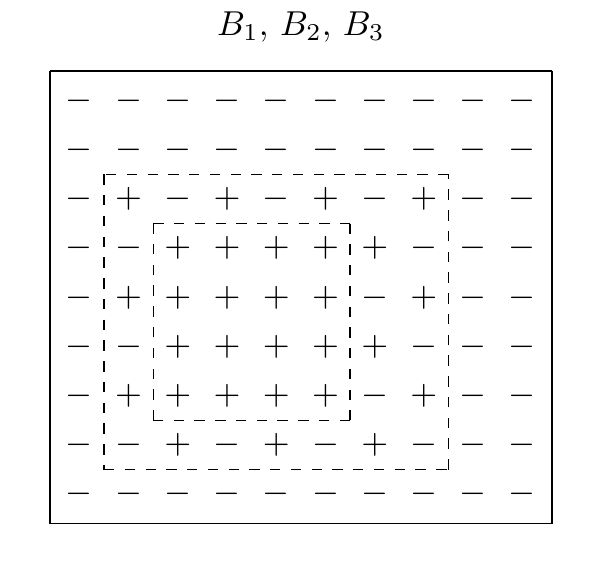}
        \caption{Examples of one configurations in $B$.}
        \label{figure: Bb}
    \end{figure}
 The set $D$ contains all configurations with more than one rectangle, see Figure \ref{figure: Dd}. More precisely, $D=D_1 \cup D_2 \cup D_3 \cup D_4 \cup D_5 \cup D_6$, where:
    \begin{itemize}
     \item $D_1$ is the collection of configurations such that there exist subcritical non-interacting rectangles $\mathcal{R}:=(R_{l,m})_{l,m}$ such that $\sigma_{\Lambda \setminus \mathcal{R}}=\underline{-1}_{\Lambda \setminus \mathcal{R}}$ and any rectangle of chessboard may contain one or more non-interacting rectangles of pluses;
     \item $D_2$ is the collection of configurations such that there exist non-interacting rectangles $\mathcal{R}:=(R_{l,m})_{l,m}$ where at least one of them is supercritical and such that $\sigma_{\Lambda \setminus \mathcal{R}}=\underline{-1}_{\Lambda \setminus \mathcal{R}}$. Moreover, any rectangle of chessboard may contain one or more non-interacting rectangles of pluses;
     \item $D_3$ is the collection of configurations consisting of interacting rectangles $\mathcal{R}:=(R_{l,m})_{l,m}$ with $l<\lambda$ and such that any rectangle of chessboard may contain one or more non-interacting rectangles of pluses; 
     \item $D_4$ is the collection of configurations consisting of non-interacting rectangles $\mathcal{R}:=(R_{l,m})_{l,m}$ with $l<\lambda$ such that $\sigma_{R_{l,m}}=\underline{+1}_{R_{l,m}}$ and $\sigma_{\Lambda \setminus \mathcal{R}}=\underline{c}_{\Lambda \setminus \mathcal{R}}$;
     \item $D_5$ is the collection of configurations consisting of rectangles $\mathcal{R}:=(R_{l,m})_{l,m}$ where at least one has $l\geq\lambda$ and such that $\sigma_{R_{l,m}}=\underline{+1}_{R_{l,m}}$ and $\sigma_{\Lambda \setminus \mathcal{R}}=\underline{c}_{\Lambda \setminus \mathcal{R}}$;
    \end{itemize}
    \begin{figure}[htb!]
        \centering
        \includegraphics[scale=0.8]{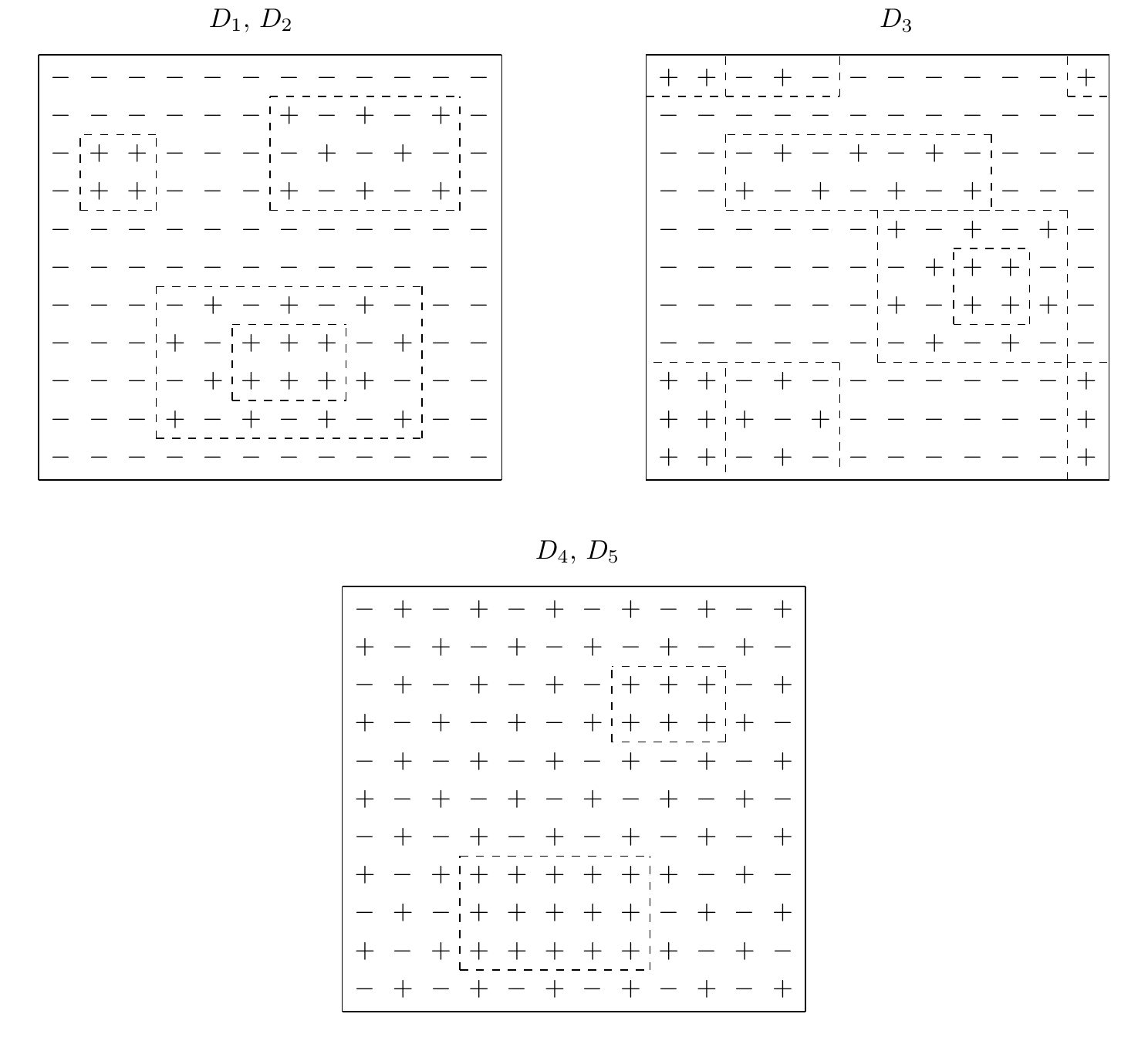}
        \caption{Examples of configurations in $D$.}
        \label{figure: Dd}
    \end{figure}
     The set $E$ contains all possible strips, that is, rectangles winding around the torus, see Figure \ref{figure: Ee}. More precisely, $E=E_1 \cup E_2 \cup E_3 \cup E_4 \cup E_5 \cup E_6 \cup E_7$, where:
    \begin{itemize}
     \item $E_1$ is the collection of configurations containing strips of $\underline{c}$ of width one surrounded by $\underline{-1}$, and possibly rectangles of $\underline{+1}$ and $\underline{c}$;
     \item $E_2$ is the collection of configurations containing strips of $\underline{+1}$ of width one surrounded by $\underline{c}$, and possibly rectangles of $\underline{+1}$;
     \item $E_3$ is the collection of configurations containing strips of $\underline{+1}$ of width one surrounded by $\underline{-1}$, and possibly rectangles of $\underline{+1}$ and $\underline{c}$;
     \item $E_4$ is the collection of configurations containing pairs of adjacent strips of $\underline{c}$ and $\underline{-1}$. For at least one of these pairs, both strips have width greater than one. Furthermore, there may be rectangles of $\underline{c}$ and $\underline{+1}$ surrounded by $\underline{-1}$, and rectangles of $\underline{+1}$ surrounded by $\underline{c}$;
     \item $E_5$ is the collection of configurations containing pairs of adjacent strips of $\underline{c}$ and $\underline{+1}$. For at least one of these pairs, both strips have width greater than one. Furthermore, there may be rectangles of $\underline{+1}$ surrounded by $\underline{c}$;
     \item $E_6$ is the collection of configurations containing pair of adjacent strips of $\underline{+1}$ and $\underline{-1}$. For at least one these pairs, both strips have width greater than one. Furthermore, there may be rectangles of $\underline{c}$ and $\underline{+1}$ surrounded by $\underline{-1}$;
     \item $E_7$ is the collection of configurations containing strips of $\underline{c}$, $\underline{-1}$ and $\underline{+1}$ with at least one width greater than one, and possibly rectangles of $\underline{c}$ and $\underline{+1}$ in $\underline{-1}$, and possibly rectangles of $\underline{+1}$ in $\underline{c}$;
    \end{itemize}
    \begin{figure}[htb!]
        \centering
        \includegraphics[scale=0.8]{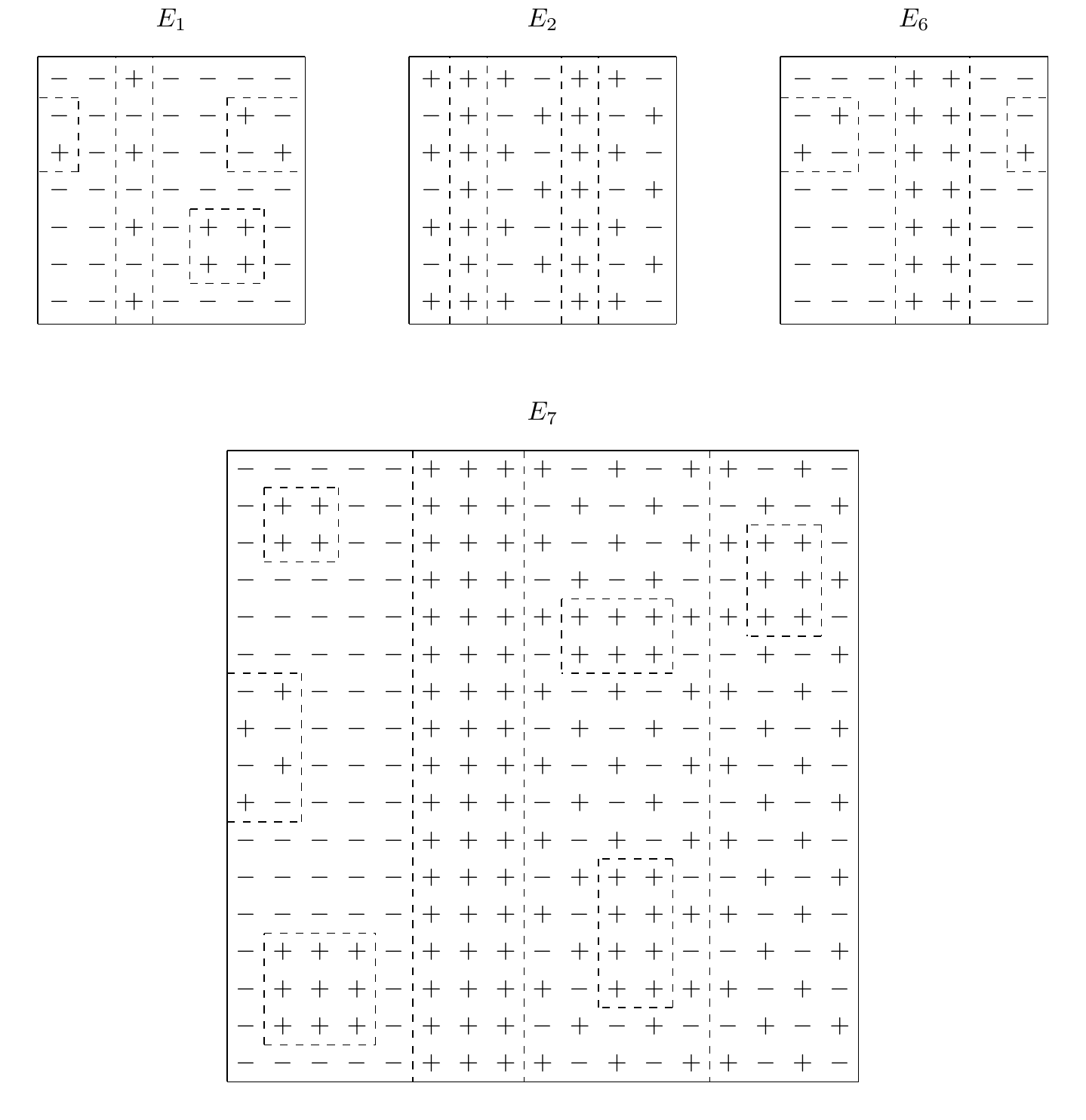}
        \caption{Examples of configurations in $E$.}
        \label{figure: Ee}
    \end{figure}

    We begin by considering the set $A$. Consider first the set $A_1$.
     \paragraph{Case $A_1$.} For any configuration $\sigma\in A_1$ we construct a path that begins in $\sigma$ and ends in a configuration in $A_1 \cup \{\underline{-1}\}$ with lower energy than $\sigma$, i.e., $\overline{\omega}\in \Theta(\sigma,\mathcal{I_{\sigma}} \cap (A_1 \cup \{\underline{-1}\}))$. We now fix $\sigma\equiv \omega_1\in A_1$ and we begin by defining $\omega_2$. If there is a minus corner in $\sigma_{R_{l,m}}$ , say in $j_1$, then $\sigma(j_1)$ is kept fixed and all other spins in the rectangle switch sign, i.e., $\omega_2:=T^{F}_{j_1}(\omega_1)$. On the other hand, if there is no minus corner in $\sigma_{R_{l,m}}$, then we call the next configuration in the path $\omega_1'$ and we define it as $\omega_1':=T(\omega_1)$, i.e., all the spins in the rectangle switch sign. After this step, $\omega_1'$ has a minus corner, so we can proceed as above and define $\omega_2:=T^{F}_{j_1}(\omega_1')$. Note that in $\omega_2$ there are two minus corners in the rectangle that are nearest neighbors of $j_1$. For the next step, keep fixed the minus corner that is contained in a side of length $l$, say in $j_2$, and define $\omega_3:=T^{F}_{j_2}(\omega_2)$. By iterating this procedure $l-2$ times, a full slice of the droplet is erased and we obtain the configuration $\eta \equiv \omega_l$ such that $\eta_{R_{l,m-1}}=\underline{c}$ and $\eta_{\Lambda \setminus R_{l,m-1}}=\underline{-1}$. In order to determine where the maximum of the transition energy is attained, we rewrite for $k=1, \ldots, l-1$
    \begin{align}\label{firstdifference}
       H(\omega_k,\omega_{k+1})-H(\omega_1) &
        =H(\omega_k) + \Delta(\omega_k,\omega_{k+1}) -H(\omega_1) \notag \\
        & =\sum_{m=1}^{k-1}(H(\omega_{m+1})-H(\omega_m))+\Delta(\omega_k,\omega_{k+1}),
    \end{align}
     with the convention that a sum over an empty set is equal to zero. From the reversibility property of the dynamics follows that
        \begin{equation}\label{reversibility}
             H(\omega_k)+\Delta(\omega_k,\omega_{k+1})=H(\omega_{k+1})+\Delta(\omega_{k+1},\omega_k),
        \end{equation}
         and since $\Delta(\omega_{k+1},\omega_k)=0$ for $k=1,\ldots,l-2$, for the path $\overline{\omega}$,
        \begin{align}\label{secdiff1}
            H(\omega_k,\omega_{k+1})-H(\omega_1)=
            \begin{cases}
             \sum_{m=1}^{k}(H(\omega_{m+1})-H(\omega_m)) & \text{if } k=1,\ldots,l-2,\\
             \sum_{m=1}^{l-2}(H(\omega_{m+1})-H(\omega_m))+\Delta(\omega_{l-1},\omega_l) & \text{if } k=l-1.
            \end{cases}
        \end{align}
     It can be shown that $H(\omega_{m+1})-H(\omega_m)=2h>0$ for $m=1,\ldots,l-2$ and $\Delta(\omega_{l-1},\omega_l)=2h$ \cite[Tab. 1]{cirillo2003metastability}, so the maximum is attained in the pair of configurations $(\omega_{l-1}, \omega_l)$. Hence,
    \begin{align}\label{VA1}
     \max_{\omega_k,\omega_{k+1} \in \overline{\omega}}H(\omega_k,\omega_{k+1})-H(\omega_1) &
     =\sum_{m=1}^{l-2}(H(\omega_{m+1})-H(\omega_m))+\Delta(\omega_{l-1},\omega_l) \notag \\
    & =2h(l-2)+2h=2h(l-1):=V_{\sigma}^*.
    \end{align}
     Since $V^*_{\sigma}$ depends only on the length $l$, we find $V^*_{A_1}=\max_{\sigma \in A_1} V^*_{\sigma}$ by taking the maximum over $l$. Since $l<\lambda$, we have
    \begin{equation}
        V^*_{A_1} < 2 (2-h).
    \end{equation}
     Finally, let us check that $\omega_l \in \mathcal{I_{\sigma}} \cap (A_1 \cup \{\underline{-1}\})$. Using \eqref{reversibility}, \eqref{VA1} and \cite[Tab. 1]{cirillo2003metastability}, we get
    \begin{align}
    H(\omega_1)+2h(l-1)=H(\omega_l)+2(2-h).
    \end{align}
     The rectangle $R_{l,m}$ is subcritical if and only if $l<2/h$, and so
    \begin{equation}
        H(\omega_1)-H(\omega_l)=4-2hl>0,
    \end{equation}
    which concludes the proof for $A_1$.
\paragraph{Case $A_2$.} For any configuration $\sigma \in A_2$ we construct
a path that begins in $\sigma$ and ends in a configuration in
$A_2 \cup \{\underline{c}\}$ with lower energy than
$\sigma$, i.e., $\overline{\omega}\in\Theta(\sigma,\mathcal{I_{\sigma}}
\cap (A_2 \cup \{\underline{c}\}))$.
We now fix $\sigma\equiv \omega_1\in A_2$ and we begin by
defining $\omega_2$. We call $j\in R_{l,m}$ a site in one of the sides
of length $l$ and such that $\sigma(j)=+1$. Furthermore, we call
$j_1 \in \Lambda \setminus R_{l,m}$ the nearest neighbor of $j$
such that (necessarily) $\sigma(j_1)=-1$ and we define
$\omega_2:=T^{C}_{j_1}(\omega_1)$, i.e., $\sigma(j_1)$
switches sign and the signs of all other sites in
$\sigma_{\Lambda \setminus R_{l,m}}$ remain fixed.
We define $\omega_3:=T(\omega_2)$, $\omega_4:=T(\omega_3)=T^2(\omega_2)$
and so on until a new slice is filled with chessboard.
We obtain the configuration $\eta$ such that
$\eta_{R_{l,m+1}}=\underline{c}$ and
$\eta_{\Lambda \setminus R_{l,m+1}}=\underline{-1}$.
Note that at the first step of the dynamics either one or two nearest
neighbors of $j_1$ in the external side of the rectangle switch sign
when $T$ is applied. Analogously, at each subsequent application
of $T$, either one of two further sites in the external side of the
rectangle switch sign. Therefore, the maximum number of iterations of
the map $T$ is $l-1$. In order to determine where the maximum of the
transition energy is attained, we rewrite the energy difference as
in \eqref{firstdifference}. Using \eqref{reversibility} and since
$\Delta(\omega_k,\omega_{k+1})=0$ for $k=2,\ldots,l-1$, for the
path $\overline{\omega}$,
        \begin{align}\label{secdiff2}
            H(\omega_k,\omega_{k+1})-H(\omega_1)=
            \begin{cases}
             \Delta(\omega_1,\omega_2)+\sum_{m=2}^{k}(H(\omega_{m+1})-H(\omega_m)) & \text{if } k=2,\ldots,l-1\\
            \Delta(\omega_1,\omega_2) & \text{if } k=1
            \end{cases}
        \end{align}
     It can be shown that $H(\omega_{m+1})-H(\omega_m)=-\Delta(\omega_{m+1},\omega_m)=-2h<0$ for $m=2,\ldots,l$ \cite[Tab. 1]{cirillo2003metastability}, so the maximum is attained in the pair of configurations in $(\omega_1, \omega_2)$, hence
    \begin{align}\label{VA2}
     \max_{\omega_k,\omega_{k+1} \in \overline{\omega}}H(\omega_k,\omega_{k+1})-H(\omega_1) &
    =\Delta(\omega_1,\omega_2)=2(2-h):=V_{\sigma}^*.
    \end{align}
     Since $V^*_{\sigma}$ is the same for all configurations in $A_2$, $V^*_{A_2}=\max_{\sigma \in A_2} V^*_{\sigma}=2(2-h)$.
     Finally, let us check that $\omega_l \in \mathcal{I_{\sigma}} \cap (A_2 \cup \{\underline{c}\})$. Using \eqref{reversibility}, \eqref{VA2} and \cite[Tab. 1]{cirillo2003metastability}, we get
    \begin{align}
        H(\omega_1)+2(2-h)=H(\omega_l)+2h(l-1).
    \end{align}
     The rectangle $R_{l,m}$ is supercritical if and only if $l>2/h$, and so
    \begin{equation}
        H(\omega_1)-H(\omega_l)=2hl-4>0,
    \end{equation}
    which concludes the proof for $A_2$.
     \paragraph{Case $A_3$.} For any configuration $\sigma\in A_3$ we construct a path that begins in $\sigma$ and ends in a configuration in $A_3 \cup \{\underline{c}\}$ with lower energy than $\sigma$, i.e., $\overline{\omega} \in \Theta(\sigma,\mathcal{I_{\sigma}} \cap (A_3 \cup \{\underline{c}\}))$. We now fix $\sigma\equiv\omega_1\in A_3$ and we begin by defining $\omega_2$. If in $\sigma_{R_{l,m}}$ there is a plus corner surrounded by two minuses, say in $j_1$, then $\sigma(j_1)$ switches sign and the signs of all other spins in the rectangle remain fixed, i.e., $\omega_2:=T^{C}_{j_1}(\omega_1)$. On the other hand, if in $\sigma_{R_{l,m}}$ there are no plus corners surrounded by minuses, then we call the next configuration in the path $\omega_1'$ and we define it as $\omega_1':=T(\omega_1)$, i.e., all the spins in $\sigma_{\Lambda \setminus R_{l,m}}$ switch sign. After this step, $\omega_1'$ has a plus corner surrounded by two minuses, so we can proceed as above and define $\omega_2:=T^{C}_{j_1}(\omega_1')$. Note that in $\omega_2$ there are two plus corners in the rectangle that are nearest neighbors of $j_1$. For the next step, the plus corner, say in $j_2$, that is contained in a side of length $l$, switches sign, i.e., $\omega_3:=T^{C}_{j_2}(\omega_2)$. By iterating this step $l-2$ times, a full slice of the droplet is erased and we obtain the configuration $\eta \equiv \omega_l$ such that $\eta_{R_{l,m-1}}=\underline{+1}$ and $\eta_{\Lambda \setminus R_{l,m-1}}=\underline{c}$. In order to determine where the maximum of the transition energy is attained, we rewrite the energy difference as in \eqref{firstdifference}. Using \eqref{reversibility}, we obtain the same result as in \eqref{secdiff1}. Hence,
    \begin{align}\label{VA3}
     \max_{\omega_k,\omega_{k+1} \in \overline{\omega}}H(\omega_k,\omega_{k+1})-H(\omega_1) &
     =\sum_{m=1}^{l-2}(H(\omega_{m+1})-H(\omega_m))+\Delta(\omega_{l-1},\omega_l) \notag \\
    & =2h(l-2)+2h=2h(l-1):=V_{\sigma}^*.
    \end{align}
     Since $V^*_{\sigma}$ depends only on the length $l$, we find $V^*_{A_3}=\max_{\sigma \in A_3} V^*_{\sigma}$ by taking the maximum over $l$. Since $l<\lambda$, we have
    \begin{equation}
        V^*_{A_3} < 2 (2-h).
    \end{equation}
     Finally, let us check that $\omega_l \in \mathcal{I_{\sigma}} \cap (A_3 \cup \{\underline{c}\})$. Using \eqref{reversibility}, \eqref{VA3} and \cite[Tab. 1]{cirillo2003metastability}, we get
    \begin{align}
    H(\omega_1)+2h(l-1)=H(\omega_l)+2(2-h).
    \end{align}
     The rectangle $R_{l,m}$ is subcritical if and only if $l<2/h$, and so
    \begin{equation}
        H(\omega_1)-H(\omega_l)=4-2hl>0,
    \end{equation}
    which concludes the proof for $A_3$.
     \paragraph{Case $A_4$.} For any configuration $\sigma \in A_4$ we construct a path that begins in $\sigma$ and ends in a configuration in $A_4 \cup \{\underline{+1}\}$ with lower energy than $\sigma$, i.e., $\overline{\omega} \in \Theta(\sigma,\mathcal{I_{\sigma}} \cap (A_4 \cup \{\underline{+1}\}))$. We now fix $\sigma\equiv \omega_1\in A_4$ and we begin by defining $\omega_2$. Pick any site $j \in R_{l,m}$ in one of the sides of length $l$, such that its nearest neighbor $j_1\in \Lambda \setminus R_{l,m}$ is such that $\sigma(j_1)=+1$. We define $\omega_2:=T^{F}_{j_1}(\omega_1)$, i.e., $\sigma(j_1)$ is kept fixed and all the spins in $\sigma_{\Lambda \setminus R_{l,m}}$ switch sign. We define $\omega_3:=T(\omega_2)$, $\omega_4:=T(\omega_3)=T^2(\omega_2)$ and so on until a new slice is filled with $+1$. We obtain the configuration $\eta$ such that $\eta_{R_{l,m+1}}=\underline{+1}$ and $\eta_{\Lambda \setminus R_{l,m+1}}=\underline{c}$. Note that at the first step of the dynamics either one or two nearest neighbors of $j_1$ in the external side of the rectangle switch sign when $T$ is applied. Analogously, at each subsequent application of $T$, either one of two further sites in the external side of the rectangle switch sign. Therefore, the maximum number of iterations of the map $T$ is $l-1$. In order to determine where the maximum of the transition energy is attained, we rewrite the energy difference as in \eqref{firstdifference}. Using \eqref{reversibility}, we obtain the same result as in \eqref{secdiff2}. Hence,
    \begin{align}\label{VA4}
     \max_{\omega_k,\omega_{k+1} \in \overline{\omega}}H(\omega_k,\omega_{k+1})-H(\omega_1)=\Delta(\omega_1,\omega_2)=2(2-h):=V_{\sigma}^*.
    \end{align}
     Since $V^*_{\sigma}$ is the same for all configurations in $A_4$, $V^*_{A_4}=\max_{\sigma \in A_4} V^*_{\sigma}=2(2-h)$.
     Finally, let us check that $\omega_l \in \mathcal{I_{\sigma}} \cap (A_4 \cup \{\underline{+1}\})$. Using \eqref{reversibility}, \eqref{VA4} and \cite[Tab. 1]{cirillo2003metastability}, we get
    \begin{align}
        H(\omega_1)+2(2-h)=H(\omega_l)+2h(l-1).
    \end{align}
     The rectangle $R_{l,m}$ is supercritical if and only if $l>2/h$, and so
    \begin{equation}
        H(\omega_1)-H(\omega_l)=2hl-4>0,
    \end{equation}
    which concludes the proof for $A_4$.
     \paragraph{Case $A_5$.} For any configuration $\sigma\in A_5$ we construct a path that begins in $\sigma$ and ends in a configuration in $D_1$ with lower energy than $\sigma$, i.e., $\overline{\omega}\in \Theta(\sigma,\mathcal{I_{\sigma}} \cap D_1)$. We now fix $\sigma\equiv \omega_1\in A_5$ and we begin by defining $\omega_2$. We call $j_1$ a corner in $R_{l,m}$ such that (necessarily) $\sigma(j_1)=+1$ and we define $\omega_2:=T^{C}_{j_1}(\omega_1)$, i.e., $\sigma(j_1)$ switches sign and the signs of all other spins in the rectangle remain fixed. Note that in $\omega_2$ there are two plus corners in the rectangle that are nearest neighbors of $j_1$. For the next step, the plus corner, say in $j_1$, that is contained in a side of length $l$ switches sign, i.e., $\omega_3:=T^{C}_{j_2}(\omega_2)$. After this, the spin of the nearest neighbor of $j_2$ along the same side of $R_{l,m}$ and different from $j_1$, say in $j_3$, switches spin, i.e., $\omega_4:=T^{C}_{j_3}(\omega_3)$. By iterating this step $l-3$ times, a full slice of the droplet is erased and we obtain the configuration $\omega_l \equiv \eta$ such that $\eta_{R_{l,m-1}}=\underline{+1}$, $\eta_{R_{l,1}}=\underline{c}$, $\eta_{\Lambda \setminus R_{l,m}}=\underline{-1}$. The configuration $\eta$ is a configuration in $D_1$. In order to determine where the maximum of the transition energy is attained, we rewrite the energy difference as in \eqref{firstdifference}. Using \eqref{reversibility}, we obtain the same result \eqref{secdiff1}. Hence,
    \begin{align}\label{VA5}
     \max_{\omega_k,\omega_{k+1} \in \overline{\omega}}H(\omega_k,\omega_{k+1})-H(\omega_1) &
     =\sum_{m=1}^{l-2}(H(\omega_{m+1})-H(\omega_m))+\Delta(\omega_{l-1},\omega_l) \notag \\
    & =2h(l-2)+2h=2h(l-1):=V_{\sigma}^*.
    \end{align}
     Since $V^*_{\sigma}$ depends only on the length $l$, we find $V^*_{A_5}=\max_{\sigma \in A_5} V^*_{\sigma}$ by taking the maximum over $l$. Since $l<\lambda$, we have
    \begin{equation}
        V^*_{A_5} < 2 (2-h).
    \end{equation}
     Finally, let us check that $\omega_l \in \mathcal{I_{\sigma}} \cap D_1$. Using \eqref{reversibility}, \eqref{VA5} and \cite[Tab. 1]{cirillo2003metastability}, we get
    \begin{align}
        H(\omega_1)+2h(l-1)=H(\omega_l)+2(2-h).
    \end{align}
     The rectangle $R_{l,m}$ is subcritical if and only if $l<2/h$, and so
    \begin{equation}
        H(\omega_1)-H(\omega_l)=4-2hl>0,
    \end{equation}
    which concludes the proof for $A_5$.
     \paragraph{Case $A_6$.} For any configuration $\sigma \in A_6$ we construct a path that begins in $\sigma$ and ends in a configuration in $D_3$ with lower energy than $\sigma$, i.e., $\overline{\omega}\in \Theta(\sigma,\mathcal{I_{\sigma}} \cap D_3)$. We now fix $\sigma\equiv\omega_1\in A_6$ and we begin by defining $\omega_2$. We call $j \in R_{l,m}$ a site in a side of $R_{l,m}$, and note that (necessarily) $\sigma(j)=+1$. Without loss of generality, we choose a side of length $l$. Furthermore, we call $j_1\in \Lambda \setminus R_{l,m}$ the nearest neighbor of $j$ contained in the external side with length $l$ such that (necessarily) $\sigma(j_1)=-1$. We define $\omega_2:=T^{C}_{j_1}(\omega_1)$, i.e., $\sigma(j_1)$ switches sign and the signs of all other spins in $\sigma_{\Lambda \setminus R_{l,m}}$ remain fixed. We define $\omega_3:=T(\omega_2)$, $\omega_4:=T(\omega_3)=T(\omega_2)$ and so on until a new slice is filled with $\underline{c}$, so we obtain the configuration $\eta$ such that $\eta_{R_{l,m}}=\underline{+1}$, $\eta_{R_{l,1}}=\underline{c}$ and $\eta_{\Lambda \setminus R_{l,m+1}}=\underline{-1}$. Note that at the first step of the dynamics either one or two nearest neighbors of $j_1$ in the external side of the rectangle switch sign when $T$ is applied. Analogously, at each subsequent application of $T$, either one of two further sites in the external side of the rectangle switch sign. Therefore, the maximum number of iterations of the map $T$ is $l-1$. The configuration $\eta$ is a configuration in $D_2$. In order to determine where the maximum of the transition energy is attained, we rewrite the energy difference as in \eqref{firstdifference}. Using \eqref{reversibility}, we obtain the same result as in \eqref{secdiff2}. Hence,
    \begin{align}\label{VA6}
     \max_{\omega_k,\omega_{k+1} \in \overline{\omega}}H(\omega_k,\omega_{k+1})-H(\omega_1)=\Delta(\omega_1,\omega_2)=2(2-h):=V_{\sigma}^*.
    \end{align}
     Since $V^*_{\sigma}$ is the same for all configurations in $A_6$, $V^*_{A_6}=\max_{\sigma \in A_6} V^*_{\sigma}=2(2-h)$.
     Finally, let us check that $\omega_l \in \mathcal{I_{\sigma}} \cap D_3$. Using \eqref{reversibility}, \eqref{VA6} and \cite[Tab. 1]{cirillo2003metastability}, we get
    \begin{align}
        H(\omega_1)+2(2-h)=H(\omega_l)+2h(l-1).
    \end{align}
     The rectangle $R_{l,m}$ is supercritical if and only if $l>2/h$, and so
    \begin{equation}
        H(\omega_1)-H(\omega_l)=2hl-4>0,
    \end{equation}
    which concludes the proof for $A_6$. In conclusion,
    \begin{equation}
    V^*_A:=\max_{i=1,...,6}{V^*_{A_i}}=2 (2-h)
    \end{equation}

    Next we consider the set $B$.
     \paragraph{Case $B_1$.} For every configuration in $B_1$, both rectangles are subcritical. Following a path that changes a slice of $\underline{+1}$ into a slice of $\underline{c}$, analogously as was done for $A_3$, we get a configuration in $\mathcal{I}_\sigma \cap (B_1 \cup A_1)$. We have
    \begin{equation}
        V^*_{B_1}=V^*_{A_3}<2(2-h).
    \end{equation}
     \paragraph{Case $B_2$.} For every configuration in $B_2$, both rectangles are supercritical. Following a path that adds a slice of $\underline{c}$, analogously as was done for $A_2$, we get a configuration in $\mathcal{I}_\sigma \cap (B_2 \cup A_4)$. We have
    \begin{equation}
        V^*_{B_2}=V^*_{A_2}=2(2-h).
    \end{equation}
     \paragraph{Case $B_3$.} For every configuration in $B_3$, the external  rectangle is supercritical and the internal rectangle is subcritical. Following a path that adds a slice of $\underline{c}$, analogously as was done for $A_2$, we get a configuration in $\mathcal{I}_\sigma \cap (B_3 \cup A_3)$. We have
    \begin{equation}
        V^*_{B_3}=V^*_{A_2}=2(2-h).
    \end{equation}
    We conclude that
    \begin{equation*}
    V^*_B=\max\{V^*_{B_1},V^*_{B_2},V^*_{B_3}\}=V^*_A.
    \end{equation*}

    Next we consider the set $D$.
     \paragraph{Case $D_1$.} For every configuration $\sigma$ in $D_1$, all rectangles are subcritical and non-interacting. If $\sigma$ contains at least one rectangle of $\underline{+1}$ surrounded by $\underline{c}$, we take our path to be the path that cuts a slice of $\underline{+1}$, analogously as was done for $A_3$. We get a configuration in $\mathcal{I}_\sigma \cap D_1$. Otherwise, if $\sigma$ contains at least one rectangle of $\underline{+1}$ surrounded by $\underline{-1}$, we take our path to be the path that changes a slice of $\underline{+1}$ into a slice of $\underline{c}$, analogously as was done for $A_5$. We get a configuration in $\mathcal{I}_\sigma \cap D_3$. Finally, we consider all remaining configurations, namely chessboard rectangles in a sea of minus. We take our path to be the path that cuts a slice of $\underline{c}$, analogous to the one described in $A_1$. We get a configuration in $\mathcal{I}_\sigma \cap (D_1 \cup A_1)$. So, we have
    \begin{equation}
        V^*_{D_1}=\max \{ V^*_{A_1}, V^*_{A_3}, V^*_{A_5} \} <2(2-h).
    \end{equation}
         \paragraph{Case $D_2$.} For every configuration $\sigma$ in $D_2$, there exists at least one supercritical rectangle. If this is a chessboard rectangle, then we take the path that makes the rectangle grow a slice of $\underline{c}$, analogously as was done for $A_2$.  We get a configuration in $\mathcal{I}_\sigma \cap (A_3 \cup A_4 \cup D_2 \cup D_4 \cup D_5 \cup E_4 \cup \{\underline{c}\})$. Otherwise, if this supercritical rectangle contains $\underline{+1}$, we take the path that makes the rectangle grow a slice of $\underline{c}$, analogously as was done for $A_6$. We get a configuration in $\mathcal{I}_\sigma \cap (D_2 \cup D_4 \cup D_5)$. So, we have
    \begin{equation}
        V^*_{D_2}=\max \{ V^*_{A_2}, V^*_{A_6} \} =2(2-h).
    \end{equation}
     \paragraph{Case $D_3$.} For every configuration $\sigma$ in $D_3$, all rectangles are subcritical and non-interacting. If $\sigma$ contains at least one rectangle of $\underline{+1}$ surrounded by $\underline{c}$, we take our path to be the path that cuts a slice of $\underline{+1}$, analogously as was done for $A_3$. We get a configuration in $\mathcal{I}_\sigma \cap D_3$. Otherwise, if $\sigma$ contains at least one rectangle of $\underline{+1}$ at lattice distance one from a rectangle of $\underline{c}$, we take the path that changes a slice of $\underline{+1}$ into a slice of $\underline{c}$ along the interface between the two rectangles, analogously as was done for $A_3$. We get a configuration in $\mathcal{I}_\sigma \cap (A_1 \cup D_1 \cup D_3)$. In the remaining cases, $\sigma$ contains at least two rectangles of different chessboard parity at lattice distance one. We take our path to be a path that changes a slice of $\underline{c}$, analogously as was done for $A_1$. We get a configuration in $\mathcal{I}_\sigma \cap (A_1 \cup D_1  \cup D_3)$. So, we have
    \begin{equation}
        V^*_{D_3}=\max \{ V^*_{A_1}, V^*_{A_3} \} <2(2-h).
    \end{equation}
     \paragraph{Case $D_4$.} For every configuration $\sigma$ in $D_4$, all rectangles of $\underline{+1}$ surrounded by $\underline{c}$ are subcritical and non-interacting. We take our path to be a path that cuts a slice of $\underline{+1}$, analogously as was done for $A_3$. We get a configuration in $\mathcal{I}_\sigma \cap (D_4 \cup A_3)$. So, we have
    \begin{equation}
        V^*_{D_4}=V^*_{A_3}<2(2-h).
    \end{equation}
     \paragraph{Case $D_5$.} For every configuration $\sigma$ in $D_5$, there exists at least a supercritical rectangle of $\underline{+1}$ surrounded $\underline{c}$. We consider this rectangle and we take the path that makes the rectangle grow a slice of $\underline{+1}$, analogously as was done for $A_4$. We get a configuration in $\mathcal{I}_\sigma \cap (D_5 \cup A_4 \cup E_5)$. So, we have
    \begin{equation}
        V^*_{D_5}=V^*_{A_4}=2(2-h).
    \end{equation}
    In conclusion,
    \begin{equation*}
     V^*_D=\max\{V^*_{D_1},V^*_{D_2},V^*_{D_3},V^*_{D_4},V^*_{D_5}\}=V^*_A.
    \end{equation*}
    %
    %
    %

The last set $E$ is composed of strips.

 \paragraph{Case $E_1$.} A configuration $\sigma \equiv \omega_1$ in $E_1$ has at least a strip of $\underline{c}$ of width one. Pick a site $j$ in the strip such that $\sigma(j)=-1$ and define $\omega_2=T_j^F(\omega_1)$, i.e., $\sigma(j)$ is kept fixed. The energy difference is $H(\omega_2)-H(\omega_1)=2h$ \cite[Tab.1]{cirillo2003metastability}. We define $\omega_3:=T(\omega_2)$, $\omega_4:=T(\omega_3)=T^2(\omega_2)$ and so on until we obtain a configuration in $\mathcal{I}_\sigma \cap (E_1 \cup D_1 \cup D_2 \cup D_3 \cup A_1 \cup A_2 \cup A_5 \cup A_6 \cup B \cup \{\underline{-1}\})$. So, we have
    \begin{equation}
        V^*_{E_1}=2h.
    \end{equation}
 \paragraph{Case $E_2$.} A configuration $\sigma \equiv \omega_1$ in $E_2$ contains at least a strip of $\underline{+1}$ of width one. Let $\sigma(j)$ be a plus in the strip surrounded by one or two minuses. We define $\omega_2=T_j^C(\omega_1)$, i.e., $\sigma(j)$ switches sign. The maximum energy difference is $H(\omega_2)-H(\omega_1)=2(2-h)$ \cite[Tab.1]{cirillo2003metastability}. We define $\omega_3:=T(\omega_2)$, $\omega_4:=T(\omega_3)=T^2(\omega_2)$ and so on until we obtain a configuration in $\mathcal{I}_{\sigma} \cap (E_2 \cup E_7 \cup \{\underline{c}\})$. So, we have
    \begin{equation}
        V^*_{E_2}=\max\{V^*_{E_7},2(2-h)\}=2(2-h).
    \end{equation}
 \paragraph{Case $E_3$.} A configuration $\sigma \equiv \omega_1$ in $E_3$ has at least a strip of $\underline{+1}$ of width one. If in $\sigma$ there is a strip of $\underline{+1}$ surrounded by two chessboards with the same parity, then pick a plus $\sigma(j)$ in the strip and define $\omega_2=T_j^C(\omega_1)$, i.e., $\sigma(j)$ switches sign. The energy difference is $H(\omega_2)-H(\omega_1)=2h$ \cite[Tab.1]{cirillo2003metastability}. We define $\omega_3:=T(\omega_2)$, $\omega_4:=T(\omega_3)=T^2(\omega_2)$ and so on until we obtain a configuration in $\mathcal{I}_\sigma \cap (E_1 \cup E_7)$. Instead, if in $\sigma$ there is a strip of $\underline{+1}$ surrounded by two chessboards with different parity, then pick a plus $\sigma(j)$ in a chessboard at lattice distance one from the strip and define $\omega_2=T_j^F(\omega_1)$, i.e., $\sigma(j)$ is kept fixed. The energy difference is $H(\omega_2)-H(\omega_1)=2(2-h)$ \cite[Tab.1]{cirillo2003metastability}. We define $\omega_3:=T(\omega_2)$, $\omega_4:=T(\omega_3)=T^2(\omega_2)$ and so on until we obtain a configuration in $\mathcal{I}_\sigma \cap E_5$. So, we have
    \begin{equation}
        V^*_{E_3}=\max \{2h,2(2-h)\}=2(2-h).
    \end{equation}
 \paragraph{Case $E_4$.} We consider a configuration $\sigma \equiv \omega_1$ in $E_4$ and pick a plus on the interface between $\underline{c}$ and $\underline{-1}$, and call $j$ the site of this plus. We call $j_1$ the nearest neighbor of $j$ in $\underline{-1}$ and we define $\omega_2=T_{j_1}^C(\omega_1)$, i.e., $\sigma(j_1)$ switches sign. The energy difference is $H(\omega_2)-H(\omega_1)=2(2-h)$ \cite[Tab.1]{cirillo2003metastability}. We define $\omega_3:=T(\omega_2)$, $\omega_4:=T(\omega_3)=T^2(\omega_2)$ and so on until we obtain a configuration in $\mathcal{I}_\sigma \cap (E_4 \cup D_4 \cup D_5 \cup E_7 \cup \{\underline{c}\})$. So, we have
    \begin{equation}
        V^*_{E_4}=2(2-h).
    \end{equation}
 \paragraph{Case $E_5$.} We consider a configuration $\sigma \equiv \omega_1$ in $E_5$ and pick a plus in $\underline{c}$ on the interface between $\underline{c}$ and $\underline{+1}$, and call $j$ the site of this plus. We define $\omega_2=T_{j}^F(\omega_1)$, i.e., $\sigma(j)$ is kept fixed. The energy difference is $H(\omega_2)-H(\omega_1)=2(2-h)$ \cite[Tab.1]{cirillo2003metastability}. We define $\omega_3:=T(\omega_2)$, $\omega_4:=T(\omega_3)=T^2(\omega_2)$ and so on until we obtain a configuration in $\mathcal{I}_\sigma \cap (E_5 \cup \{\underline{+1}\})$. So, we have
    \begin{equation}
        V^*_{E_5}=2(2-h).
    \end{equation}
 \paragraph{Case $E_6$.} We consider a configuration $\sigma \equiv \omega_1$ in $E_6$ and pick a minus on the interface between $\underline{-1}$ and $\underline{+1}$, and call $j$ the site of this minus. We define $\omega_2=T_{j}^C(\omega_1)$, i.e., $\sigma(j)$ switches sign. The energy difference is $H(\omega_2)-H(\omega_1)=2(2-h)$ \cite[Tab.1]{cirillo2003metastability}. We define $\omega_3:=T(\omega_2)$, $\omega_4:=T(\omega_3)=T^2(\omega_2)$ and so on until we obtain a configuration in $\mathcal{I}_\sigma \cap E_7$. So, we have
    \begin{equation}
        V^*_{E_6}=2(2-h).
    \end{equation}
 \paragraph{Case $E_7$.} If the configuration $\sigma \equiv \omega_1$ in $E_7$ contains a strip of $\underline{-1}$ adjacent to a strip of $\underline{+1}$ and both have width greater then one, then we pick a minus one on the interface between $\underline{-1}$ and $\underline{+1}$ and we take a path analogously as was done for $E_6$. We get a configuration in $\mathcal{I}_\sigma \cap (E_7 \cup E_5)$. Otherwise, $E_7$ contains a strip of $\underline{c}$ adjacent to a strip of $\underline{-1}$, both with width greater then one. Then, we pick a plus one, say in $j$, in the strip of $\underline{c}$. We call $j_1$ the nearest neighbor of $j$ in $\underline{-1}$ and we define $\omega_2=T_{j_1}^C(\omega_1)$, i.e., $\sigma(j_1)$ switches sign. The energy difference is $H(\omega_2)-H(\omega_1)=2(2-h)$ \cite[Tab.1]{cirillo2003metastability}. We define $\omega_3:=T(\omega_2)$, $\omega_4:=T(\omega_3)=T^2(\omega_2)$ and so on until we obtain a configuration in $\mathcal{I}_\sigma \cap (E_7 \cup E_5)$. So, we have
    \begin{equation}
        V^*_{E_7}=\max\{V^*_{E_6},2(2-h)\}=2(2-h).
    \end{equation}
     Then
     \begin{equation*}
      V^*_E=\max\{V^*_{E_1},V^*_{E_2},V^*_{E_3},V^*_{E_4},V^*_{E_5},V^*_{E_6},V^*_{E_7}\}=V^*_A.
     \end{equation*}
     To conclude the proof, we compare the value of $V^*=\max\{V^*_A,V^*_B,V^*_D,V^*_E\}=2(2-h)$ and $\activationenergy$, and we get
    \begin{equation}
    \activationenergy\equiv -2h\lambda^2+2\lambda(4+h)-2h>2(2-h)=V^*.
    \end{equation}

\end{proof}

\appendix
\section{Appendix}\label{appendixA}
 In this Appendix, we recall some results and give explicit computation that are used in the paper. Equation \eqref{Delta} is obtained as follows,
\begin{align*}
      -\lim_{\beta \rightarrow \infty } \frac{\log p(\sigma,\eta)}{\beta} & =-\lim_{\beta \rightarrow \infty } \frac{\log \Big( \prod_{i \in \Lambda} p_{i,\sigma}(\eta(i)) \Big)}{\beta}= \\
     & =-\lim_{\beta \rightarrow \infty } \frac{\sum_{i \in \Lambda} \log \Big( \frac{1}{1+\exp\{-2\beta \eta (i) |S_{\sigma}(i)+h|\}}\Big)}{\beta}= \\
     & =\lim_{\beta \rightarrow \infty } \frac{\sum_{i \in \Lambda} \log ({1+\exp\{-2\beta \eta (i) |S_{\sigma}(i)+h| \}})}{\beta},
\end{align*}
where we used \eqref{eqmark}, \eqref{eqprob} and logarithm properties,
\begin{align*}
     &\lim_{\beta \rightarrow \infty } \frac{\sum_{i \in \Lambda: \eta(i)(S_{\sigma}(i)+h)>0} \log ({1+\exp\{-2\beta \eta (i) (S_{\sigma}(i)+h)\}})}{\beta}+ \\
     & +\lim_{\beta \rightarrow \infty } \frac{\sum_{i \in \Lambda:\eta(i)(S_{\sigma}(i)+h)<0} \log ({1+\exp\{2\beta \eta (i) (S_{\sigma}(i)+h)\}})}{\beta} = \\
     & = \sum_{i \in \Lambda:\eta(i)(S_{\sigma}(i)+h)<0} \lim_{\beta \rightarrow \infty } \frac{\log ({1+\exp\{2\beta |S_{\sigma}(i)+h|\}})}{\beta}= \\
     & =\sum_{i \in \Lambda:\eta(i)(S_{\sigma}(i)+h)<0} 2|S_{\sigma}(i)+h|.
\end{align*}

 \begin{defn}\cite[Energy landscape Definition 2.1]{cirillo2013relaxation}
 An energy landscape is a quadruplet $(\mathcal{X},Q,H,\Delta)$ where the finite non-empty sets $\mathcal{X}, Q \subset \mathcal{X} \times \mathcal{X}$, and the maps $H : \mathcal{X} \rightarrow \mathbb{R}$, $\Delta : Q \rightarrow \mathbb{R}^+$ are called respectively state space, connectivity relation, energy, and energy cost, and for any $\sigma,\eta\in \mathcal{X}$ there exists an integer $n \ge 2$ and $\omega_1,...,\omega_n \in \mathcal{X}$ such that $\omega_1=\sigma, \omega_n=\eta$, and $(\omega_i, \omega_{i+1})\in Q$ for any $i=1,...,n-1$.
 An energy landscape $(\mathcal{X},Q,H,\Delta)$ is called reversible if and only if $Q$ is symmetric, that is if $(\sigma,\eta)\in Q$ then $(\eta,\sigma)\in Q$, and $H(\sigma)+\Delta(\sigma,\eta)=\Delta(\eta,\sigma)+H(\eta)$ for all $(\sigma,\eta)\in Q$.
\end{defn}

\begin{thm}\cite[Theorem 2.4]{cirillo2013relaxation}
 Consider a reversible energy landscape $(\mathcal{X},Q,H,\Delta)$. Let $\mathcal{X}^s$ be the set of stable states and assume $\mathcal{X}\setminus \mathcal{X}^s \ne \emptyset$. If there exist $A\subset \mathcal{X} \setminus \mathcal{X}^s$ and $a\in \mathbb{R^+}$ such that
\begin{enumerate}
     \item $\Phi(\sigma, \mathcal{X}^s)-H(\sigma)=a$ for all $\sigma\in A$;
     \item either $\mathcal{X}\setminus (A \cup \mathcal{X}^s)=\emptyset$ or $V_\sigma<a$ for all $\sigma\in \mathcal{X}\setminus (A \cup \mathcal{X}^s)$;
\end{enumerate}
then $\activationenergym=a$ and $\mathcal{X}^m=A$.
\end{thm}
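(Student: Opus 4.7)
The plan is to establish $V_\sigma=a$ for every $\sigma\in A$; combined with hypothesis (2) this immediately yields $\activationenergym=a$ and $\mathcal{X}^m=A$ from the definitions \eqref{Gammam}. The upper bound $V_\sigma\le a$ for $\sigma\in A$ is elementary: since $\sigma\in A\subset\mathcal{X}\setminus\mathcal{X}^s$ is not a ground state, every $y\in\mathcal{X}^s$ satisfies $H(y)<H(\sigma)$, so $\mathcal{X}^s\subseteq\mathcal{I}_\sigma$ and $\Phi(\sigma,\mathcal{I}_\sigma)\le\Phi(\sigma,\mathcal{X}^s)$. Subtracting $H(\sigma)$ and applying hypothesis (1) gives $V_\sigma\le a$.

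The matching lower bound $V_\sigma\ge a$ is the core of the proof; I would argue by contradiction through an iterative descent. Suppose some $\sigma\in A$ has $V_\sigma<a$, so by \eqref{stablev} there is a path $\omega^{(1)}\in\Theta(\sigma,\eta_1)$ with $H(\eta_1)<H(\sigma)$ and $\Phi_{\omega^{(1)}}<H(\sigma)+a$. If $\eta_1\in\mathcal{X}^s$, the path already witnesses $\Phi(\sigma,\mathcal{X}^s)<H(\sigma)+a$, contradicting (1). If $\eta_1\in A$, hypothesis (1) supplies a path $\omega^{(2)}$ from $\eta_1$ to $\mathcal{X}^s$ of height $H(\eta_1)+a$, which lies strictly below $H(\sigma)+a$ since $H(\eta_1)<H(\sigma)$; the concatenation $\omega^{(1)}\omega^{(2)}$ then yields $\Phi(\sigma,\mathcal{X}^s)<H(\sigma)+a$, again contradicting (1). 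Otherwise $\eta_1\in\mathcal{X}\setminus(A\cup\mathcal{X}^s)$ and hypothesis (2) gives $V_{\eta_1}<a$, producing $\eta_2$ with $H(\eta_2)<H(\eta_1)$ via a path of height strictly below $H(\eta_1)+a<H(\sigma)+a$. Iterating produces a sequence $\sigma=\eta_0,\eta_1,\eta_2,\dots$ along which $H$ strictly decreases; since $\mathcal{X}$ is finite, the sequence terminates after finitely many steps in $A\cup\mathcal{X}^s$, and the two cases above close the argument, yielding a path from $\sigma$ to $\mathcal{X}^s$ of height $<H(\sigma)+a$, which contradicts (1).

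Having established $V_\sigma=a$ for every $\sigma\in A$, the conclusion is immediate: hypothesis (2) gives $V_\sigma<a$ for every $\sigma\in\mathcal{X}\setminus(A\cup\mathcal{X}^s)$, so $\activationenergym=\max_{\sigma\in\mathcal{X}\setminus\mathcal{X}^s}V_\sigma=a$ and the set of maximizers is precisely $A$, whence $\mathcal{X}^m=A$. The main obstacle I anticipate is bookkeeping the strict versus non-strict inequalities in the descent: the segment appended via hypothesis (1) has height \emph{equal to} $H(\eta_k)+a$, so strictness in the concatenated bound comes only from the strict drop of $H$ along the chain $(\eta_k)$, which in turn relies crucially on the strict inequality $V_\sigma<a$ in (2) rather than a weak one, and on the fact that the first segment $\omega^{(1)}$ produces $H(\eta_1)<H(\sigma)$ by definition of $\mathcal{I}_\sigma$.
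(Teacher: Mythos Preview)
Your argument is correct. Note, however, that the paper does not actually supply its own proof of this statement: the theorem is quoted in Appendix~\ref{appendixA} as \cite[Theorem 2.4]{cirillo2013relaxation} and used as a black box in the proof of Theorem~\ref{IdMS}. There is therefore nothing in the present paper to compare your proof against; your descent argument is a clean self-contained justification of the cited result, and the bookkeeping of strict versus non-strict inequalities that you flag is handled correctly (the terminal segment coming from hypothesis~(1) has height exactly $H(\eta_m)+a$, and the strict inequality with $H(\sigma)+a$ is guaranteed by the strict energy drop $H(\eta_m)<H(\sigma)$ accumulated along the chain).
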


\begin{teo}\cite[Proposition 3.1]{cirillo2003metastability}
A configuration $\sigma\in\mathcal{S}_{\underline{-1}}$ is stable
for PCA if and only if $\sigma(x)=+1$ for all sites $x$ inside a
collection of pairwise non-interacting rectangles of minimal side
length $l \ge 2$ and $\sigma(x)=-1$ elsewhere. A configuration
$\sigma\in\mathcal{S}_{\underline{+1}}$ is stable if and only
if $\sigma=\underline{+1}$. There is no stable configuration
$\sigma\in\mathcal{S}_{\underline{c}}$.
\end{teo}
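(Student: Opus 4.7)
The plan is to reduce stability to a purely local sign condition at each lattice site, and then run a combinatorial case analysis on the three classes. Since the parallel update factorizes as $p(\sigma,\sigma)=\prod_{i\in\Lambda}p_{i,\sigma}(\sigma(i))$, stability $p(\sigma,\sigma)\to 1$ is equivalent to $p_{i,\sigma}(\sigma(i))\to 1$ for every site $i$, which by the explicit form \eqref{eqprob} is in turn equivalent to $\sigma(i)(S_\sigma(i)+h)>0$ for every $i\in\Lambda$. Because $h\in(0,1)$ and $S_\sigma(i)\in\{-4,-2,0,2,4\}$, this sign condition translates into two combinatorial rules: (P$^+$) every plus site has at least two plus nearest neighbors, so $S_\sigma\ge 0$; (P$^-$) every minus site has at most one plus nearest neighbor, so $S_\sigma\le -2$. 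All subsequent arguments reduce to checking (P$^+$) and (P$^-$) on each class.

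For $\mathcal{S}_{\underline{-1}}$, the ``if'' direction is direct: inside a plus rectangle with minimal side $\ge 2$ the corners have exactly two plus neighbors, edges three, and interiors four, so (P$^+$) holds; and the non-interacting spacing $d\ge 3$ between distinct plus rectangles ensures every minus sees at most one plus, so (P$^-$) holds. The ``only if'' direction is the content-heavy step. I would argue by contradiction on the geometry of any connected plus cluster: if the cluster is not a rectangle, one of its extremal sites (say the uppermost, and among those the leftmost plus) has at most one plus neighbor and violates (P$^+$); sweeping outward from this extremal site and applying (P$^+$) inductively forces the cluster to be a Cartesian product of two discrete intervals, hence a rectangle. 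A side of length $1$ is ruled out because a plus on such a side has at most one plus neighbor, and two rectangles at lattice distance $\le 2$ are ruled out because a minus on the separating lattice line would see two or more pluses.

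For $\mathcal{S}_{\underline{+1}}$, any $\sigma\ne\underline{+1}$ contains a minus cluster, and at any corner of such a cluster two neighbors are minus and two are plus, giving $S_\sigma(i)=0$ and hence $\sigma(i)(S_\sigma(i)+h)=-h<0$, violating (P$^-$). Only $\underline{+1}$ remains, and it satisfies both rules trivially since $S_\sigma\equiv +4$ everywhere. For $\mathcal{S}_{\underline{c}}$ the argument is even shorter: at any site whose nearest-neighbor pattern coincides with the chessboard one has $S_\sigma(i)=-4\,\sigma(i)$, and therefore $\sigma(i)(S_\sigma(i)+h)=h-4<0$. Since every $\sigma\in\mathcal{S}_{\underline{c}}$ contains at least one such site by definition, no configuration in this class is stable; in particular $\underline{c}^e$ and $\underline{c}^o$ form a stable pair rather than a stable configuration, since $T\underline{c}^e=\underline{c}^o$ and $T\underline{c}^o=\underline{c}^e$.

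The main obstacle is the rectangularity deduction in the ``only if'' direction for $\mathcal{S}_{\underline{-1}}$: one must rule out all non-rectangular connected shapes (L-shapes, T-shapes, staircases, and so on) allowed by (P$^+$). The extremal-site sweep sketched above is the natural combinatorial tool, but it has to be executed with some care at the joints where two rectangles meet, and one also has to verify that plus clusters wrapping around the torus $\Lambda^2_L$ (the strips grouped in the set $E$ in the proof of Lemma~\ref{EST}) are excluded from $\mathcal{S}_{\underline{-1}}$ by the very definition of that class; otherwise a horizontally winding strip of width $\ge 2$ would also satisfy (P$^+$) and (P$^-$) but would not be of the rectangular form asserted.
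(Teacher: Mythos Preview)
This proposition is quoted from \cite{cirillo2003metastability} and is merely restated in Appendix~\ref{appendixA} without proof, so there is no argument in the present paper to compare against. Your reduction of stability to the site-wise condition $\sigma(i)(S_\sigma(i)+h)>0$ and its translation into the rules (P$^+$) and (P$^-$) is correct, and your treatment of $\mathcal{S}_{\underline{+1}}$ and $\mathcal{S}_{\underline{c}}$ is sound.

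There is, however, a real gap in the ``only if'' direction for $\mathcal{S}_{\underline{-1}}$. The assertion that a non-rectangular connected plus cluster must contain an extremal plus site with at most one plus neighbour is false. Consider an L-shape: a $4\times 2$ block with a $2\times 2$ block stacked on top of one end. A direct check shows that every plus in this cluster has at least two plus neighbours, so (P$^+$) holds throughout; yet the cluster is not a rectangle. The actual obstruction sits at the concave corner, where the \emph{minus} site has two plus neighbours and therefore violates (P$^-$). In short, (P$^+$) alone does not force rectangularity --- you effectively concede this in your final paragraph, which is inconsistent with the earlier extremal-site claim. The correct deduction uses (P$^-$) inside the bounding box of each connected cluster, not only to separate distinct rectangles: whenever a connected plus cluster fails to fill its bounding rectangle, one can locate a minus site in that box adjacent to two pluses (a concave corner), contradicting (P$^-$). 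Once you reroute the argument through (P$^-$) in this way, the rest of your outline goes through.
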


\begin{teo}\cite[Proposition 3.3]{cirillo2003metastability}
\begin{itemize}
     \item[i)] For any $\sigma\in\mathcal{S}_{\underline{+1}} \setminus \{\underline{+1}\}$, the pair $(\sigma,T\sigma)$ is not a stable pair.
     \item[ii)] Given $C\in\{\underline{c}^o,\underline{c}^e\}$ and $\sigma\in\mathcal{S}_{\underline{c}}$ the pair $(\sigma,T\sigma)$ is a stable pair if and only if there exist $k\ge 0$ pairwise non-interacting rectangles $R_{l_{1},m_{1}},\, R_{l_{2},m_{2}},...,R_{l_{k},m_{k}}$ such that $2\le l_{i} \le m_{i} \le L-2$ for any $i=1,...,k$, $\sigma_{\mathcal{R}}=\underline{+1}_\mathcal{R}$ ($\sigma$ coincides with \underline{+1} inside the rectangles) and $\sigma_{\Lambda\setminus \mathcal{R}}=\underline{c}_{\Lambda\setminus \mathcal{R}}$ ($\sigma$ coincides with the chessboard $C$ outside the rectangles), where $\mathcal{R}=\bigcup_{i=1}^k \overline{R}_{l_i,m_i}$.
     \item[iii)] Given $\sigma\in\mathcal{S}_{\underline{-1}}$ the pair $(\sigma,T\sigma)$ is a stable pair if and only if there exist $k\ge1$ rectangles $R_{l_{1},m_{1}},\, R_{l_{2},m_{2}},...,R_{l_{k},m_{k}}$ with $2\le l_{i} \le m_{i} \le L-2$ for any $i=1,...,k$, and there exists an integer $s\in\{1,...,k\}$ such that the following conditions are fulfilled:
    \begin{itemize}
         \item[1.] $\overline{R}_{l_{i},m_{i}} \cap \overline{R}_{l_{j},m_{j}}=\emptyset$ and $l_i \ge 2$ for any $i,j \in \{1,...,k\}$;
         \item[2.] or any $j\in\{1,...,s\}$ the family $R_{l_{j},m_{j}},\, R_{l_{s+1},m_{s+1}}, ..., R_{l_{k},m_{k}}$ is a family of pairwise non-interacting rectangles;
         \item[3.] $\sigma_{\Lambda \setminus \mathcal{R}}= \underline{-1}_{\Lambda \setminus \mathcal{R}}$ ($\sigma$ coincides with
$-1$ outside the rectangles);
        \item[4.] for any $j\in\{s+1,...,k\}$
        \item[5.]
        \begin{itemize}
             \item[5.1.] $\overline{R}'_{l'_{i},m'_{i}} \subset \overline{R}_{l_{j},m_{j}}$ for any $i\in\{1,..,k'\}$;
             \item[5.2.] for any $j=1,...,s$ the family $\{R'_{l'_i,m'_i}: i=1,...,k'\}$ (recall $R'_{l'_i,m'_i}=R'_{l'_i,m'_i}(j)$ for any $i=1,...,k'=k'(j)$ is a family of pairwise non-interacting rectangles;
             \item[5.3.] $\sigma_{\mathcal{R'}}= \underline{+1}_{\mathcal{R'}}$ where $\mathcal{R'}\equiv \mathcal{R'}(j):=\bigcap^k_{i=1} \overline{R}'_{l'_i,m'_i}$;
             \item[5.4.] either $\sigma_{\overline{R}_{l_j,m_j}\setminus\mathcal{R}'=C^o_{l_j,m_j}\setminus\mathcal{R}'}$ or $\sigma_{\overline{R}_{l_j,m_j}\setminus\mathcal{R}'=C^e_{l_j,m_j}\setminus\mathcal{R}'}$;
        \end{itemize}
         \item[6.] or any $i,j\in\{1,...,s\}$ the two rectangles $R_{l_{j},m_{j}}$ $R_{l_{i},m_{i}}$ must be non-interacting if $\sigma_{R_{l_{j},m_{j}} \setminus \mathcal{\overline{R}}'(j)}=\sigma_{R_{l_{j},m_{j}} \setminus \mathcal{\overline{R}}'(i)}$
    \end{itemize}
\end{itemize}
\end{teo}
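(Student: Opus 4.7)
The plan is to reduce the stable-pair condition to a purely local constraint on the configuration and then analyse each sea separately. With the nearest-neighbour kernel \eqref{KAPPA} one has $S_\sigma(i)\in\{-4,-2,0,2,4\}$, and since $h\in(0,1)$ the sign of $S_\sigma(i)+h$ equals the sign of $S_\sigma(i)$ when $S_\sigma(i)\ne 0$ and equals $+1$ when $S_\sigma(i)=0$. Consequently $T\sigma(i)=+1$ iff at least two of the four nearest neighbours of $i$ carry spin $+1$. Writing $F(\sigma):=\{i\in\Lambda:T\sigma(i)\ne\sigma(i)\}$ for the flipping set, a short check shows that $T^2\sigma=\sigma$ (the defining property of the stable pair $(\sigma,T\sigma)$) is equivalent to $F(T\sigma)=F(\sigma)$. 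Since $T$ depends only on nearest neighbours, this identity can be verified site by site using only the spins of $\sigma$ in an $\ell^1$-ball of radius $2$. This reduction is the core technical tool of the proof.

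For part (i), I would fix $\sigma\in\mathcal{S}_{\underline{+1}}\setminus\{\underline{+1}\}$ and locate a minus spin $i$ at the boundary of some minus cluster: because the sea is $+1$, such an $i$ has at least two plus neighbours, hence $i\in F(\sigma)$ and $T\sigma(i)=+1$. A direct inspection of the radius-$2$ $\ell^1$-ball around $i$, using the explicit structure of $\mathcal{S}_{\underline{+1}}$, shows that at most one neighbour of $i$ is flipped by $T$, so after the update $i$ still has at least two plus neighbours. Therefore $i\notin F(T\sigma)$ and the pair is not stable.

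For parts (ii) and (iii) I would argue in both directions using the local criterion $F(T\sigma)=F(\sigma)$. For sufficiency, given a $\sigma$ of the prescribed geometry, I split the lattice into: (a) interior of any $+1$ rectangle, where $S_\sigma=+4$ so $T$ fixes the site; (b) chessboard region, where $S_\sigma=\pm 4$, so every chessboard site flips and the whole chessboard swaps parity, after which each site again sees four opposite-sign neighbours and therefore flips back; (c) interface sites between different regions. The non-interaction distances stated before the proposition (distance $\ge 3$ for same-parity chessboard/chessboard or plus/plus, distance $=1$ for opposite-parity chessboard/chessboard or matched plus/chessboard) are exactly the ones for which $F(T\sigma)=F(\sigma)$ is preserved even at the interface, which follows by enumerating the admissible local patches at corners, edges and protrusions. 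For necessity I argue contrapositively: any violation of a minimal-side or non-interaction condition in the statement produces, in the radius-$2$ ball of the offending interface site, one of a short list of patches for which an explicit computation exhibits a site with $F(T\sigma)(i)\ne F(\sigma)(i)$, contradicting the stable-pair property.

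The main obstacle is the combinatorial enumeration of admissible interface patches in parts (ii) and (iii): corners versus edges of length $\ge 2$, isolated one-by-one protrusions, interfaces between chessboards of different parity, and nested $+1$ rectangles inside chessboard rectangles all give rise to slightly different local constraints. Once each local patch has been enumerated and $T^2$ has been computed on each, the full set of emerging constraints collapses into the hierarchical list (1)--(6) of part (iii); in particular, the non-interaction conditions cause the inner rectangles (sitting inside chessboard regions) to decouple from the outer rectangles (sitting in the $-1$ sea), so that the two layers of conditions can be checked independently.
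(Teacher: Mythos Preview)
The paper does not contain a proof of this proposition: it is merely restated in Appendix~\ref{appendixA} as a quotation of \cite[Proposition~3.3]{cirillo2003metastability}, with no argument supplied. There is therefore nothing in this paper to compare your attempt against; the actual proof lives in the cited reference, not here.

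That said, your outline is the natural strategy and is essentially how the original reference proceeds: one reduces the stable-pair condition $T^2\sigma=\sigma$ to the local identity $F(T\sigma)=F(\sigma)$, uses the majority-rule description of $T$ (valid because $h\in(0,1)$ makes ties break toward $+1$), and then performs a case analysis on local patches at interfaces. Your sketch of part~(i) is correct in spirit. For parts~(ii) and~(iii), however, you openly defer the decisive work---the exhaustive enumeration of interface patches and the verification that the emerging constraints are \emph{exactly} conditions (1)--(6)---to a phrase like ``a short list of patches'' and ``collapses into the hierarchical list''. This is where all the content is, and without it the argument is a plan rather than a proof. In particular, the necessity direction in~(iii) requires showing that \emph{every} deviation from the stated geometry produces a site where $F(T\sigma)\ne F(\sigma)$, and the number of distinct local configurations (corners of nested rectangles, adjacent chessboards of equal versus opposite parity, near-touching $+1$ rectangles at various offsets) is larger than your sketch suggests.
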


\section{Appendix}\label{AppendixB}
In this section we prove theorems given in Section~\ref{s:series}.

\medskip
\par\noindent
\textit{Proof of Theorem~\ref{t:ptaset}.\/}
 Recall the equivalence relation above Theorem 3.6 in \cite{cirillo2013relaxation} for $x,y \in \mathcal{X}$
\begin{equation}
 x \sim y \text{ if and only if } \Phi(x,y)-H(x)<\activationenergym \text{ and } \Phi(y,x)-H(y)<\activationenergym.
\end{equation}
 The configurations $\metauno^1,...,\metauno^n$ are in the same equivalence class. Thus, the theorem follows immediately by Condition~\ref{t:series00}, \eqref{series01new}, and \cite[Theorem~3.6]{cirillo2013relaxation}. \\
\qed

 Before given the proof of Theorem~\ref{t:meant}, we state two useful lemmas. In the first of the two lemmas we collect two bounds on the energy cost to go from any state $x\neq \metauno^r$ to $\metauno^r$ or to $\stab$, for $r=1,...,n$. The second lemma is similar.

\begin{lem}
\label{stimePhi}
 Assume Condition~\ref{t:series00} is satisfied. For any $x\in \mathcal{X}$ and $x\neq\metauno^r$, for every $r=1,...,n$. If $H(x)\leq H(\metauno^r)$, we have that
\begin{equation}
\label{claim1}
\Phi(x,\stab)-H(x)
<\Gamma_m
\;\;\textrm{ and }\;\;
\Phi(x,\metauno^r)
-H(\metauno^r)
\ge \Gamma_m, \;\;\textrm{ for every }\;\; r=1,...,n.
\end{equation}
\end{lem}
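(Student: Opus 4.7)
The plan is to split Lemma \ref{stimePhi} into its two inequalities and handle each separately, exploiting that under the hypotheses $x$ is excluded from the metastable set $\mathcal{X}^m=\{\metauno^1,\ldots,\metauno^n,\metadue\}$: indeed, $x\neq\metauno^{r'}$ by assumption and $H(x)\le H(\metauno^r)<H(\metadue)$ rules out $x=\metadue$. Consequently, whenever $x\ne \stab$, $V_x<\Gamma_m$ strictly, since the maximum stability level is attained exactly on $\mathcal{X}^m$.

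For the first inequality $\Phi(x,\stab)-H(x)<\Gamma_m$, I would iteratively construct a sequence $x=z_0,z_1,\ldots,z_k=\stab$ with strictly decreasing energy, where each step is justified by the definition of stability level: given a non-stable $z_i$ with $V_{z_i}<\Gamma_m$, there exists $z_{i+1}\in\mathcal{I}_{z_i}$ such that $\Phi(z_i,z_{i+1})=H(z_i)+V_{z_i}$. The key point is that every intermediate state remains outside $\mathcal{X}^m$: it satisfies $H(z_i)\le H(x)\le H(\metauno^r)$, so $z_i\ne\metadue$; and the only way $z_i$ could coincide with some $\metauno^{r'}$ is if $H(z_i)=H(\metauno^r)$, which by the strict energy decrease forces $i=0$, where $z_0=x\ne\metauno^{r'}$ by hypothesis. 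Finiteness of $\mathcal{X}$ guarantees termination at $\stab$, and concatenating the paths yields $\Phi(x,\stab)\le \max_i(H(z_i)+V_{z_i})<H(x)+\Gamma_m$, because $H(z_i)\le H(x)$ and $V_{z_i}<\Gamma_m$ strictly for each $i<k$.

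For the second inequality $\Phi(x,\metauno^r)-H(\metauno^r)\ge\Gamma_m$, I would split on the two energy subcases. If $H(x)<H(\metauno^r)$, then $x\in\mathcal{I}_{\metauno^r}$ and the claim follows immediately from $\Phi(\metauno^r,x)\ge\Phi(\metauno^r,\mathcal{I}_{\metauno^r})=H(\metauno^r)+V_{\metauno^r}=H(\metauno^r)+\Gamma_m$, using the symmetry of $\Phi$ granted by reversibility (equation \eqref{proprev}). If $H(x)=H(\metauno^r)$, I would argue by contradiction: assume $\Phi(\metauno^r,x)<H(\metauno^r)+\Gamma_m$. Since $x\ne\stab$ (as $H(x)=H(\metauno^r)>H(\stab)$) and $x\notin\mathcal{X}^m$, we have $V_x<\Gamma_m$, so there exists $y\in\mathcal{I}_x$ with $\Phi(x,y)=H(x)+V_x<H(\metauno^r)+\Gamma_m$. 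Concatenating the two paths yields $\Phi(\metauno^r,y)<H(\metauno^r)+\Gamma_m$ with $y\in\mathcal{I}_{\metauno^r}$ (since $H(y)<H(x)=H(\metauno^r)$), contradicting $V_{\metauno^r}=\Gamma_m$.

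The main obstacle is the equal-energy subcase of the second inequality, where the direct argument via $\mathcal{I}_{\metauno^r}$ fails because $x\notin\mathcal{I}_{\metauno^r}$, and one must bootstrap a hypothetical low-cost path by one additional descent step into $\mathcal{I}_{\metauno^r}$. This bootstrap succeeds only because of the \emph{strict} inequality $V_x<\Gamma_m$, which is itself available only because the hypotheses $x\ne\metauno^{r'}$ and $H(x)\le H(\metauno^r)<H(\metadue)$ together force $x\notin\mathcal{X}^m$; conversely, the condition $x\ne\metauno^{r'}$ cannot be dropped, since \eqref{series01new} shows it would lead to $\Phi(\metauno^q,\metauno^r)-H(\metauno^r)<\Gamma_m$ in direct contradiction with the claim.
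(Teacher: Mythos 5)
Your proof is correct, but for the first inequality you take a genuinely different route than the paper. The paper simply invokes \cite[Theorem~2.3]{cirillo2013relaxation}: that result gives $\Phi(x,\stab)\le\Gamma_m+H(x)$ for every $x$, and equality would force $x\in\mathcal{X}^m$, which is excluded because $x\neq\metauno^r$ by hypothesis and $H(x)\le H(\metauno^r)<H(\metadue)$ rules out $x=\metadue$. You instead reprove this bound from scratch via a descent construction: a chain $x=z_0,z_1,\ldots,z_k=\stab$ of strictly decreasing energy with $\Phi(z_i,z_{i+1})=H(z_i)+V_{z_i}$, the check that every $z_i$ stays outside $\mathcal{X}^m$ (so that $V_{z_i}<\Gamma_m$ strictly), and the concatenation bound $\Phi(x,\stab)\le\max_i\bigl(H(z_i)+V_{z_i}\bigr)<H(x)+\Gamma_m$. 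This is sound --- the degeneracy clause $H(\metauno^r)=H(\metauno^q)$ of Condition~\ref{t:series00} is exactly what lets you exclude $z_i=\metauno^{r'}$ for $i\ge1$, and uniqueness of the ground state guarantees termination at $\stab$ --- and it buys self-containedness, whereas the paper's citation buys brevity; in effect your descent re-derives the special case of the quoted theorem that is needed here. For the second inequality your argument coincides with the paper's: the subcase $H(x)<H(\metauno^r)$ is identical, and in the degenerate subcase $H(x)=H(\metauno^r)$ the paper shows that membership of $x$ in $\mathcal{C}=\{y:\Phi(y,\metauno^r)<H(\metauno^r)+\Gamma_m\}$ would force $V_x\ge\Gamma_m$, which is precisely the contrapositive of your bootstrap (append one descent step to $y\in\mathcal{I}_x=\mathcal{I}_{\metauno^r}$ and contradict $V_{\metauno^r}=\Gamma_m$). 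One cosmetic slip in your closing remark: the fact $\Phi(\metauno^r,\metauno^q)-H(\metauno^r)<\Gamma_m$ that makes the hypothesis $x\neq\metauno^{r'}$ indispensable comes from Condition~\ref{t:series00} itself, not from \eqref{series01new}.
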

\medskip
\par\noindent
\textit{Proof.\/}
 Let us prove the first inequality. By Theorem~2.3  in \cite{cirillo2013relaxation} we have that $\Phi(x,\stab)\leq\Gamma_m+H(x)$. If by contradiction $\Phi(x,\stab)=\Gamma_m+H(x)$ then, by the same Theorem~2.3 in \cite{cirillo2013relaxation}, $x\in \mathcal{X}^m$ which is in contradiction with Condition~\ref{t:series00}.
 Next we turn to the proof of the second inequality and we distinguish two cases. If $H(x)<H(\metauno^r)$, then we have that $x\in \mathcal{I}_{\metauno^r}$. By \eqref{detailedbalance} and by \eqref{stablev}, we get
 $$\Phi(\metauno^r,x)\geq \Phi(\metauno^r,\mathcal{I}_{\metauno^r})= \Gamma_m+H(\metauno^r)$$ that proves the inequality. If $H(x)=H(\metauno^r)$, then let us define the set
\begin{displaymath}
 \mathcal{C}:=\{y\in \mathcal{X}: \Phi(y,\metauno^r)<H(\metauno^r)+\Gamma_m\}.
\end{displaymath}
 We will show that $x\not\in \mathcal{C}$. Since $H(x)= H(\metauno^r)$, the identity $\mathcal{I}_{x}=\mathcal{I}_{\metauno^r}$ follows. Furthermore, since $\metauno^r\in \mathcal{X}^m$, we have $\mathcal{C}\cap \mathcal{I}_{\metauno^r}=\emptyset$; hence, $\mathcal{C}\cap \mathcal{I}_{x}=\emptyset$ as well. Moreover, if $x\in\mathcal{C}$ then $V_x=\Phi(x,\mathcal{I}_x)-H(x)\geq H(\metauno^r)+\Gamma_m-H(x)=\Gamma_m$. By \eqref{detailedbalance}, $x$ would be a metastable state, in contradiction with Condition~\ref{t:series00}. Hence, since $x\not\in \mathcal{C}$, we have that
$$
\Phi(x,\metauno^r)\geq \Gamma_m+H(\metauno^r).
$$
This proves the inequality for every $r=1,...,n$.
\qed
\begin{lem}
\label{stimePhi2}
 Assume Condition~\ref{t:series00} is satisfied. For any $x\in \mathcal{X}$ and
$x\notin\{\metadue,\metauno^1,...,\metauno^n,\stab\}$.
If $H(x)\leq H(\metadue)$, then
\begin{equation}
\label{claim3}
\Phi(x,\{\metauno^1,...,\metauno^n,\stab\})-H(x)
<\Gamma_m
\;\;\textrm{ and }\;\;
\Phi(x,\metadue)
-H(\metadue)
\ge \Gamma_m
\end{equation}
\end{lem}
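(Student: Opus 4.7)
The proof will closely mirror that of Lemma \ref{stimePhi}, with $\metadue$ now playing the role of $\metauno^r$. By Condition \ref{t:series00} we have $\mathcal{X}^s=\{\stab\}$ and $\mathcal{X}^m=\{\metauno^1,\dots,\metauno^n,\metadue\}$, so the hypothesis $x\notin\{\metadue,\metauno^1,\dots,\metauno^n,\stab\}$ is equivalent to $x\notin \mathcal{X}^m\cup \mathcal{X}^s$, which is the only structural fact needed for the first inequality.

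For the first inequality, I would invoke Theorem~2.3 of \cite{cirillo2013relaxation} exactly as in the first part of Lemma \ref{stimePhi}: it yields $\Phi(x,\stab)\leq \Gamma_m+H(x)$, and equality would force $x\in \mathcal{X}^m$, contradicting the hypothesis. Hence $\Phi(x,\stab)-H(x)<\Gamma_m$, and the desired bound follows from the trivial monotonicity $\Phi(x,\{\metauno^1,\dots,\metauno^n,\stab\})\leq \Phi(x,\stab)$. Note that the assumption $H(x)\leq H(\metadue)$ plays no role here; it is used only in the second inequality.

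For the second inequality I would split into the two cases $H(x)<H(\metadue)$ and $H(x)=H(\metadue)$, in direct analogy with the corresponding argument in Lemma \ref{stimePhi}. In the strict case, $x\in \mathcal{I}_{\metadue}$ and the metastability of $\metadue$ (namely $V_{\metadue}=\Gamma_m$) yields $\Phi(\metadue,x)\geq \Phi(\metadue,\mathcal{I}_{\metadue})=H(\metadue)+\Gamma_m$; the reversibility identity \eqref{proprev}, which implies $\Phi(x,\metadue)=\Phi(\metadue,x)$, then closes the case. In the borderline case I would introduce $\mathcal{C}:=\{y\in \mathcal{X}:\Phi(y,\metadue)<H(\metadue)+\Gamma_m\}$ and show $x\notin \mathcal{C}$ by contradiction: using $H(x)=H(\metadue)$, which implies $\mathcal{I}_x=\mathcal{I}_{\metadue}$, together with $\mathcal{C}\cap \mathcal{I}_{\metadue}=\emptyset$ (a direct consequence of $\metadue\in \mathcal{X}^m$), one concatenates a sub-$\Gamma_m$ path $x\to \metadue$ with the reverse of an arbitrary path $x\to \mathcal{I}_{\metadue}=\mathcal{I}_x$ to force the endpoint in $\mathcal{I}_{\metadue}$ to lie in $\mathcal{C}$, contradicting $\mathcal{C}\cap \mathcal{I}_{\metadue}=\emptyset$; this shows $V_x\geq \Gamma_m$ and hence $x\in \mathcal{X}^m$, against the hypothesis.

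I do not anticipate any genuinely new obstacle beyond Lemma \ref{stimePhi}: the target set $\{\metauno^1,\dots,\metauno^n,\stab\}$ in the first inequality is absorbed trivially by monotonicity, and the only moderately delicate step, the concatenation argument in the $H(x)=H(\metadue)$ case, goes through verbatim once $\metauno^r$ is replaced by $\metadue$. The whole proof should therefore reduce to a near-mechanical adaptation of the previous lemma.
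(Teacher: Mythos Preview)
Your proposal is correct and follows essentially the same approach as the paper's own proof: the first inequality via \cite[Theorem~2.3]{cirillo2013relaxation} plus the monotonicity $\Phi(x,\{\metauno^1,\dots,\metauno^n,\stab\})\le\Phi(x,\stab)$, and the second via the same two-case split on $H(x)<H(\metadue)$ versus $H(x)=H(\metadue)$ with the set $\mathcal{C}$. Your concatenation description in the borderline case is just a slightly more explicit rendering of the paper's one-line claim that $x\in\mathcal{C}$ forces $\Phi(x,\mathcal{I}_x)\ge H(\metadue)+\Gamma_m$; both rely on the same ultrametric-type inequality for $\Phi$, so there is no substantive difference.
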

\medskip
\par\noindent
\textit{Proof.\/}
 Let us prove the first inequality. By Theorem~2.3 in \cite{cirillo2013relaxation} we have $\Phi(x,\{x_1^1,...,x_1^n,\stab\})\leq \Phi(x,\stab)\leq\Gamma_m+H(x)$. We proceed by contradiction and assume that $\Phi(x,\stab)=\Gamma_m+H(x)$. By \cite[Theorem 2.3]{cirillo2013relaxation}, $x\in \mathcal{X}^m$ which is in contradiction with Condition~\ref{t:series00}. Next we turn to the proof of the second inequality we distinguish two cases. If $H(x)<H(\metadue)$, then we have that $x\in \mathcal{I}_{\metadue}$. By \eqref{detailedbalance} of metastable state and by \eqref{stablev}, we get $$\Phi(\metadue,x)\geq \Phi(\metadue,\mathcal{I}_{\metadue})= \Gamma_m+H(\metadue).$$ This proves the inequality. If $H(x)=H(\metadue)$, then let us define the set
\begin{displaymath}
 \mathcal{C}:=\{y\in \mathcal{X}: \Phi(y,\metadue)<H(\metadue)+\Gamma_m\}.
\end{displaymath}
 We will show that $x\not\in \mathcal{C}$. Since $H(x)= H(\metadue)$, the identity $\mathcal{I}_{x}=\mathcal{I}_{\metadue}$ follows. Furthermore, since $\metadue\in X_\rr{m}$, we have $\mathcal{C}\cap \mathcal{I}_{\metadue}=\emptyset$; hence, $\mathcal{C}\cap \mathcal{I}_{x}=\emptyset$ as well. Moreover, if $x\in\mathcal{C}$ then $V_x=\Phi(x,\mathcal{I}_x)-H(x)\geq H(\metadue)+\Gamma_m-H(x)=\Gamma$. By \eqref{detailedbalance}, $x$ would be a metastable state, in contradiction with Condition~\ref{t:series00}. Hence, since $x\not\in \mathcal{C}$, we have that $$\Phi(x,\metadue)\geq \Gamma_m+H(\metadue).$$ This proves the inequality.
\qed

\medskip
\par\noindent
\textit{Proof of Theorem~\ref{t:meant}.\/}
We begin by proving Equation \eqref{Egen1}.
 The proof is based on Lemma~\ref{stimePhi} and Lemma~\ref{stimePhi2}. In the proof we only use the representation of the expected mean time in terms of the Green function \cite[Corollary~3.3]{bovier2004metastability}, see also \cite[Eq. (4.29)]{gaudilliere2009condenser}. Indeed, recalling \eqref{cap-prop} above, we rewrite the expected value in terms of the capacity as
\begin{equation}
\label{valatt1}
\mathbb{E}_{\metadue}[\tau_{\{\metauno^1,...,\metauno^n,\stab\}}]
= \frac{1}{\rr{cap}_\beta(\metadue,\{\metauno^1,...,\metauno^n,\stab\})}
\sum_{x\in \mathcal{X}}
\mu_\beta(x) \, h_{\metadue,\{\metauno^1,...,\metauno^n,\stab\}}(x).
\end{equation}
 Since $h_{\metadue,\{\metauno^1,...,\metauno^n,\stab\}}(\metadue)=1$, we get the following lower bound:
\begin{align}
\label{downboundval}
\mathbb{E}_{\metadue}[\tau_{\{\metauno^1,...,\metauno^n,\stab\}}]
 & \ge \frac{1}{\rr{cap}(\metadue,\{\metauno^1,...,\metauno^n,\stab\})} \mu_\beta(\metadue) h_{\metadue,\{\metauno^1,...,\metauno^n,\stab\}}(\metadue) \notag \\
 & = \frac{\mu_\beta(\metadue)}{\rr{cap}(\metadue,\{\metauno^1,...,\metauno^n,\stab\})}.
\end{align}
 In order to give an upper bound, we first use the boundary conditions in \eqref{eqpot} to rewrite \eqref{valatt1} as follows:
\begin{align*}
\mathbb{E}_{\metadue}[\tau_{\{\metauno^1,...,\metauno^n,\stab\}}]
=
\frac{1}{\rr{cap}(\metadue,\{\metauno^1,...,\metauno^n,\stab\})}
&\Big[
 \sum_{{x\in \mathcal{X} \setminus \{\metauno^1,...,\metauno^n,\stab\},}\atop{H(x)\leq H(\metadue)}}
\mu_\beta(x) h_{\metadue,\{\metauno^1,...,\metauno^n,\stab\}}(x) \\
 &+\sum_{{x\in \mathcal{X} \setminus \{\metauno^1,...,\metauno^n,\stab\},}\atop{H(x)> H(\metadue)}}
\mu_\beta(x) h_{\metadue,\{\metauno^1,...,\metauno^n,\stab\}}(x)
\Big].
\end{align*}
 Next we bound $\mu_{\beta}(x)$ as $\mu_\beta(x) \leq \mu_\beta(\metadue)\exp (-\beta \delta)$ for some positive $\delta=\min_x \{H(x) -H(x_2)\}$ and for any $x\in \mathcal{X}$ such that $H(x)>H(\metadue)$. We get
\begin{align}
\label{valmetadue}
\mathbb{E}_{\metadue}[\tau_{\{\metauno^1,...,\metauno^n,\stab\}}]
\simeq
\frac{1}{\rr{cap}(\metadue,\{\metauno^1,...,\metauno^n,\stab\})}
&\Big[
\!\!
\!\!
 \sum_{{x\in \mathcal{X} \setminus (\mathcal{X}^m \cup \mathcal{X}^s),}\atop{H(x)\leq H(\metadue)}}
\!\!
\!\!
 \mu_\beta(x)\,h_{\metadue,\{\metauno^1,...,\metauno^n,\stab\}}(x) \notag \\
&+\mu_\beta(\metadue)[1+o(1)]
\Big].
\end{align}
 Next we upper bound the equilibrium potential $h_{\metadue,\{\metauno^1,...,\metauno^n,\stab\}}(x)$ by applying Proposition \ref{hfun} with $x=x$, $Y=\{x_2\}$, $Z=\{x_1^1,...,x_1^n, \stab\}$, as
\begin{displaymath} h_{\metadue,\{\metauno^1,...,\metauno^n,\stab\}}(x)
\leq\frac{\rr{cap}(x,\metadue)}{\rr{cap}(x,\{\metauno^1,...,\metauno^n,\stab\})}
\;\;.
\end{displaymath}
 Furthermore, if $H(x)\le H(\metadue)$ and $x \notin \mathcal{X}^m \cup \mathcal{X}^s$, then
\begin{eqnarray*}
h_{\metadue,\{\metauno^1,...,\metauno^n,\stab\}}(x)
&\leq&
 C_1
 \frac{e^{-\beta\Phi(x,\metadue)}}{e^{-\beta\Phi(x,\{\metauno^1,...,\metauno^n,\stab\})}}\leq C_1
\frac{e^{-\beta(\Gamma_m+H(\metadue))}}{e^{-\beta(\Gamma_m+H(x)-\delta_1)}}
=
 C_1 e^{-\beta\delta_1}\frac{\mu_\beta(\metadue)}{\mu_\beta(x)},
\end{eqnarray*}
 where $C_1,\delta_1$ are suitable positive constants. In the first inequality we used Proposition \ref{t:apriori}, in the second we used Lemma~\ref{stimePhi} and Lemma \ref{stimePhi2}. By using \eqref{valmetadue} we get
\begin{displaymath}
\mathbb{E}_{\metadue}[\tau_{\{\metauno^1,...,\metauno^n,\stab\}}]
\le
\frac{1}{\rr{cap}(\metadue,\{\metauno^1,...,\metauno^n,\stab\})}
\Big[
\!\!
\!\!
 \sum_{{x\in \mathcal{X} \setminus (\mathcal{X}^m \cup \mathcal{X}^s),}\atop{H(x)\leq H(\metadue)}}
\!\!
\!\!
 C_1\mu_\beta(x) e^{-\beta\delta_1}\frac{\mu_\beta(\metadue)}{\mu_\beta(x)}+ \mu_\beta(\metadue)[1+o(1)]
\Big],
\end{displaymath}
which implies
\begin{equation}
\label{upboundval}
\mathbb{E}_{\metadue}[\tau_{\{\metauno^1,...,\metauno^n,\stab\}}]
\le
\frac{\mu_\beta(\metadue)}{\rr{cap}(\metadue,\{\metauno^1,...,\metauno^n,\stab\})}[1+o(1)],
\end{equation}
where we have used that the configuration space is finite.
 Equation \eqref{Egen1} finally follows by \eqref{downboundval} and \eqref{upboundval}.

Next we prove Equation \eqref{Egen2}.
 Recalling \eqref{cap-prop} above, we rewrite the expected value in terms of the capacity as
\begin{equation}
\label{defvalatt}
\mathbb{E}_{\{\metauno^1,...,\metauno^n\}}[\tau_{\stab}]
= \frac{1}{\rr{cap}_\beta(\{\metauno^1,...,\metauno^n\},\stab)}
\sum_{x\in \mathcal{X}}
\mu_\beta(x) \, h_{\{\metauno^1,...,\metauno^n\},\stab}(x).
\end{equation}
 Considering the contribution of $\metauno^r$ for every $r=1,...,n$ in the sum and observing that $h_{\{\metauno^1,...,\metauno^n\},\stab}(\metauno^q)=1$ for every $q=1,...,n$, we get the following lower bound:
\begin{align}
\label{downval2}
\mathbb{E}_{\{\metauno^1,...,\metauno^n\}}[\tau_{\stab}]
 & \ge \frac{1}{\rr{cap}(\{\metauno^1,...,\metauno^n\},\stab)} \sum_{q=1}^n\mu_\beta(\metauno^q) h_{\{\metauno^1,...,\metauno^n\},\stab}(\metauno^q) \notag \\
 & =\frac{1}{\rr{cap}(\{\metauno^1,...,\metauno^n\},\stab)} \sum_{q=1}^n\mu_\beta(\metauno^q) \notag \\
 & =\frac{\mu_\beta(\{\metauno^1,...,\metauno^n\})}{\rr{cap}(\{\metauno^1,...,\metauno^n\},\stab)},
\end{align}
 where the last equality follows from the definition of Gibbs-measure and $H(\metauno^r)=H(\metauno^q)$ for every $r,q =1,...,n$.
In order to give an upper bound, we first use the
 boundary conditions in \eqref{eqpot} to rewrite \eqref{defvalatt} as follows:
\begin{align}
\mathbb{E}_{\{\metauno^1,...,\metauno^n\}}[\tau_{\stab}]
=
\frac{1}{\rr{cap}(\{\metauno^1,...,\metauno^n\},\stab)}
\Big[ &
\sum_{{x\in \mathcal{X}\setminus \stab,}\atop{H(x)\leq H(\metauno^r)}}
\mu_\beta(x) h_{\{\metauno^1,...,\metauno^n\},\stab}(x) \notag \\ +
& \sum_{{x\in \mathcal{X}\setminus \stab,}\atop{H(x)> H(\metauno^r)}}
\mu_\beta(x) h_{\{\metauno^1,...,\metauno^n\}\stab}(x)
\Big].
\end{align}
 Next we bound $\mu_{\beta}(x)$ as $\mu_\beta(x) \leq \mu_\beta(\metauno^r)\exp (-\beta \delta)$ for some positive $\delta=\min_x \{H(x) -H(\metauno^r)\}$ and for any $x\in \mathcal{X}$ such that $H(x)>H(\metauno^r)$. We get
\begin{equation}
\label{tantival}
\mathbb{E}_{\{\metauno^1,...,\metauno^n\}}[\tau_{\stab}]
=
\frac{1}{\rr{cap}(\{\metauno^1,...,\metauno^n\},\stab)}
\Big[
\!\!
\!\!
 \sum_{{x\in \mathcal{X}\setminus \{ \metauno^1,...,\metauno^n,\stab\},}\atop{H(x)\leq H(\metauno^r)}}
\!\!
\!\!
\mu_\beta(x)\,h_{\{\metauno^1,...,\metauno^n\},\stab}(x)+
n\mu_\beta(\metauno^r)[1+o(1)]
\Big].
\end{equation}
 Next we upper bound the equilibrium potential $h_{\{\metauno^1,...,\metauno^n\},\stab}(x)$ by applying Proposition \ref{hfun} with $x=x$, $Y=\{x_1^1,...,x_1^n\}$ and $Z=\{\stab\}$
\begin{displaymath}
h_{\{\metauno^1,...,\metauno^n\},\stab}(x)\leq\frac{\rr{cap}(x,\{\metauno^1,...,\metauno^n\})}{\rr{cap}(x,\stab)}
\;\;.
\end{displaymath}
 Furthermore, if $H(x)\le H(\metauno^r)$ and $x \notin \{\metauno^1,...,\metauno^n,\stab\}$, then
\begin{eqnarray*}
h_{\{\metauno^1,...,\metauno^n\},\stab}(x)
&\leq&
 C_2
 \frac{e^{-\beta\Phi(x,\{\metauno^1,...,\metauno^n\})}}{e^{-\beta\Phi(x,\stab)}}\leq C_2
\frac{e^{-\beta(\Gamma_m+H(\metauno^r))}}{e^{-\beta(\Gamma_m+H(x)-\delta_2)}}
=
 C_2 e^{-\beta\delta_2}\frac{\mu_\beta(\metauno^r)}{\mu_\beta(x)},
\end{eqnarray*}
 where $C_2,\delta_2$ are suitable positive constants. In the first inequality we used Proposition \ref{t:apriori}, in the second we used Lemma~\ref{stimePhi} and Lemma \ref{stimePhi2}. By using \eqref{tantival} we get
\begin{displaymath}
\mathbb{E}_{\{\metauno^1,...,\metauno^n\}}[\tau_{\stab}]
\le
\frac{1}{\rr{cap}(\{\metauno^1,...,\metauno^n\},\stab)}
\Big[
\!\!
\!\!
 \sum_{{x\in \mathcal{X}\setminus \{\metauno^1,...,\metauno^n,\stab\},}\atop{H(x)\leq H(\metauno^r)}}
\!\!
\!\!
 C_2\mu_\beta(x) e^{-\beta\delta_2}\frac{\mu_\beta(\metauno^r)}{\mu_\beta(x)}+
n\mu_\beta(\metauno^r)[1+o(1)]
\Big],
\end{displaymath}
which implies
\begin{equation}
\label{upval2}
\mathbb{E}_{\{\metauno^1,...,\metauno^n\}}[\tau_{\stab}]
\le
\frac{n\mu_\beta(\metauno^r)}{\rr{cap}(\{\metauno^1,...,\metauno^n\},\stab)}[1+o(1)],
\end{equation}
 where we have used that the configuration space is finite. Equation \eqref{Egen2} finally follows recalling
 $n\mu_\beta(\metauno^r)=\mu_\beta(\{\metauno^1,...\metauno^n\})$ and by \eqref{downval2} and \eqref{upval2}.

Next we prove Equation \eqref{valattsing}.
 Recalling \eqref{cap-prop} above, we rewrite the expected value in terms of the capacity as
\begin{equation}
\label{meant1}
\mathbb{E}_{\metauno^r}[\tau_{\stab}]
= \frac{1}{\rr{cap}_\beta(\metauno^r,\stab)}
\sum_{x\in \mathcal{X}}
 \mu_\beta(x) \, h_{\metauno^r,\stab}(x) \qquad \text{for every } r=1,...,n.
\end{equation}
 Considering the contribution of every $\metauno^r$ in the sum and observing that $h_{\metauno^r,\stab}(\metauno^r)=1$ and $h_{\metauno^r,\stab}(\metauno^q) \simeq 1$ for every $q=1,...,n$ , we get the following lower bound:
\begin{align}
\label{meant_lower}
\mathbb{E}_{\metauno^r}[\tau_{\stab}]
 & \ge \frac{1}{\rr{cap}(\metauno^r,\stab)} \mu_\beta(\metauno^r) h_{\metauno^r,\stab}(\metauno^r)+\sum_{{q=1,}\atop{q \neq r}}^n\frac{1}{\rr{cap}(\metauno^r,\stab)} \mu_\beta(\metauno^q) h_{\metauno^r,\stab}(\metauno^q) \notag \\
 & \simeq \frac{1}{\rr{cap}(\metauno^r,\stab)}\sum_{q=1}^n \mu_\beta(\metauno^q) \notag \\
& =\frac{n\mu_\beta(\metauno^r)}{\rr{cap}(\metauno^r,\stab)},
\end{align}
 where the last equality follows from the definition of Gibbs-measure and $H(\metauno^r)=H(\metauno^q)$ for every $q=1,...,n$. In order to give an upper bound, we first use the boundary conditions in \eqref{eqpot} to rewrite \eqref{meant1} as follows:
\begin{displaymath}
\mathbb{E}_{\metauno^r}[\tau_{\stab}]
=
\frac{1}{\rr{cap}(\metauno^r,\stab)}
\Big[
\sum_{{x\in \mathcal{X} \setminus \stab,}\atop{H(x)\leq H(\metauno^r)}}
\mu_\beta(x) h_{\metauno^r,\stab}(x)+
\sum_{{x\in \mathcal{X} \setminus \stab,}\atop{H(x)> H(\metauno^r)}}
\mu_\beta(x) h_{\metauno^r,\stab}(x)
\Big].
\end{displaymath}
 Next we bound $\mu_{\beta}(x)$ as $\mu_\beta(x) \leq \mu_\beta(\metauno^r)\exp (-\beta \delta)$ for some positive $\delta=\min_x \{H(x) -H(x_1^r)\}$ and for any $x\in \mathcal{X}$ such that $H(x)>H(\metauno^r)$. Recalling that
 $h_{\metauno^r,\stab}(\metauno^r)=1$, $h_{\metauno^r,\stab}(\metauno^q)=1+o(1)$ for every $q=1,...,n$ with $q \neq r$, we get
\begin{equation}
\label{inpiu}
\mathbb{E}_{\metauno^r}[\tau_{\stab}]
\simeq
\frac{1}{\rr{cap}(\metauno^r,\stab)}
\Big[
\!\!
\!\!
 \sum_{{x\in \mathcal{X} \setminus \{\metauno^1,...,\metauno^n,\stab\},}\atop{H(x)\leq H(\metauno^r)}}
\!\!
\!\!
\mu_\beta(x)\,h_{\metauno^r,\stab}(x)+
\sum_{q=1}^n \mu_\beta(\metauno^q)[1+o(1)]
\Big].
\end{equation}
 Next we upper bound the equilibrium potential $h_{\metauno^r,\stab}(x)$ by applying Proposition \ref{hfun} with $x=x$, $Z=\{\stab\}$ and $Y=\{\metauno^r\}$ for every $i=1,...,n$
\begin{displaymath}
h_{\metauno^r,\stab}(x)\leq\frac{\rr{cap}(x,\metauno^r)}{\rr{cap}(x,\stab)}
\;\;.
\end{displaymath}
 Furthermore, if $H(x)\le H(\metauno^r)$ and $x \neq \metauno^q$ for every $q=1,...,n$, then
\begin{eqnarray*}
h_{\metauno^r,\stab}(x)
&\leq&
 C_3
\frac{e^{-\beta\Phi(x,\metauno^r)}}{e^{-\beta\Phi(x,\stab)}}\leq C_3
\frac{e^{-\beta(\Gamma_m+H(\metauno^r))}}{e^{-\beta(\Gamma_m+H(x)-\delta_3)}}
=
 C_3 e^{-\beta\delta_3}\frac{\mu_\beta(\metauno^r)}{\mu_\beta(x)},
\end{eqnarray*}
 where $C_3,\delta_3$ are suitable positive constants. In the first inequality we used Proposition \ref{t:apriori}, in the second we used Lemma~\ref{stimePhi} and Lemma \ref{stimePhi2}. By using \eqref{inpiu} we get

\begin{displaymath}
\mathbb{E}_{\metauno^r}[\tau_{\stab}]
\le
\frac{1}{\rr{cap}(\metauno^r,\stab)}
\Big[
\!\!
\!\!
 \sum_{{x\in \mathcal{X} \setminus \{\metauno^1,...,\metauno^n,\stab\},}\atop{H(x)\leq H(\metauno^r)}}
\!\!
\!\!
 C_3\mu_\beta(x) e^{-\beta\delta_3}\frac{\mu_\beta(\metauno^r)}{\mu_\beta(x)}+
\sum_{q=1}^n \mu_\beta(\metauno^q)[1+o(1)]
\Big],
\end{displaymath}
which implies
\begin{equation}
\label{uppot}
\mathbb{E}_{\metauno^r}[\tau_{\stab}]
\le
\frac{\sum_{q=1}^n\mu_\beta(\metauno^q)}{\rr{cap}(\metauno^r,\stab)}[1+o(1)]=
\frac{n\mu_\beta(\metauno^r)}{\rr{cap}(\metauno^r,\stab)}[1+o(1)],
\end{equation}
 where we have used that the configuration space is finite and $H(\metauno^r)=H(\metauno^q)$ for every $q=1,...,n$.

\medskip
\par\noindent
 \textit{Proof of Theorem~\ref{t:addition01} and Theorem~\ref{t:addition03}.\/}
 The two theorems follow immediately by exploiting Condition~\ref{t:series02} and applying Theorem~\ref{t:meant}. \\
\qed

The proof of Theorem~\ref{t:addition02} is based on the following lemma.

\begin{lem}
\label{t:dimo00}
 Given three or more states $y,w^1,...,w^n,z\in \mathcal{X}$ pairwise mutually different, we have that the following holds
\begin{equation}
\label{dimo00}
\mathbb{E}_y[\tau_z]
=
\mathbb{E}_y[\tau_{\{w^1,...,w^n,z\}}]
+
\mathbb{E}_{\{w^1,...,w^n\}}[\tau_z]\mathbb{P}_y(\tau_{\{w^1,...,w^n\}}<\tau_z).
\end{equation}
\end{lem}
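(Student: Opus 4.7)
The plan is to prove Lemma~\ref{t:dimo00} by decomposing the hitting time $\tau_z$ at the first visit to the larger set $\{w^1,\dots,w^n,z\}$ and applying the strong Markov property, combined with the symmetry among the $w^i$'s inherited from Condition~\ref{t:series00}.

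First, observe that the hitting times satisfy $\tau_z\ge\tau_{\{w^1,\dots,w^n,z\}}$ pointwise, with equality on the event $\{X_{\tau_{\{w^1,\dots,w^n,z\}}}=z\}$ and strict inequality on its complement $\{X_{\tau_{\{w^1,\dots,w^n,z\}}}\in\{w^1,\dots,w^n\}\}=\{\tau_{\{w^1,\dots,w^n\}}<\tau_z\}$. Therefore the pathwise decomposition
\begin{equation*}
\tau_z=\tau_{\{w^1,\dots,w^n,z\}}+(\tau_z-\tau_{\{w^1,\dots,w^n,z\}})\mathbf{1}_{\{\tau_{\{w^1,\dots,w^n\}}<\tau_z\}}
\end{equation*}
holds. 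Taking $\mathbb{E}_y[\cdot]$ on both sides yields the first term $\mathbb{E}_y[\tau_{\{w^1,\dots,w^n,z\}}]$ of the target identity, plus a remainder that we treat next.

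Next I would apply the strong Markov property at the stopping time $\tau_{\{w^1,\dots,w^n,z\}}$ to the remainder. Partitioning the event $\{\tau_{\{w^1,\dots,w^n\}}<\tau_z\}$ according to which state $w^i$ is hit first, we obtain
\begin{equation*}
\mathbb{E}_y\bigl[(\tau_z-\tau_{\{w^1,\dots,w^n,z\}})\mathbf{1}_{\{\tau_{\{w^1,\dots,w^n\}}<\tau_z\}}\bigr]
=\sum_{i=1}^n\mathbb{P}_y\bigl(X_{\tau_{\{w^1,\dots,w^n,z\}}}=w^i\bigr)\,\mathbb{E}_{w^i}[\tau_z].
\end{equation*}
Summing over $i$ also gives $\sum_i\mathbb{P}_y(X_{\tau_{\{w^1,\dots,w^n,z\}}}=w^i)=\mathbb{P}_y(\tau_{\{w^1,\dots,w^n\}}<\tau_z)$. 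To recover the factorised form in the right-hand side of \eqref{dimo00} I would invoke the degeneracy of the $w^i$'s guaranteed by Condition~\ref{t:series00}: since $H(w^i)=H(w^q)$ and the communication cost $\Phi(w^i,w^q)-H(w^i)$ is strictly smaller than $\Gamma_m$ for every $i,q$, the corresponding Gibbs weights coincide and, by Theorem~\ref{t:meant} (in particular \eqref{valattsing}), the quantity $\mathbb{E}_{w^i}[\tau_z]$ is independent of $i$ up to $[1+o(1)]$. Because the Gibbs measure restricted to $\{w^1,\dots,w^n\}$ is the uniform measure on that set, the definition of $\mathbb{E}_{\{w^1,\dots,w^n\}}[\tau_z]$ used in Section~\ref{s:series} reduces, to leading order, to this common value, so that
\begin{equation*}
\sum_{i=1}^n\mathbb{P}_y\bigl(X_{\tau_{\{w^1,\dots,w^n,z\}}}=w^i\bigr)\,\mathbb{E}_{w^i}[\tau_z]
=\mathbb{E}_{\{w^1,\dots,w^n\}}[\tau_z]\,\mathbb{P}_y(\tau_{\{w^1,\dots,w^n\}}<\tau_z)\,[1+o(1)],
\end{equation*}
which completes the identity.

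The main obstacle is precisely the last step: without the degeneracy assumption the identity fails, because the right-hand side would need to be a weighted average of the $\mathbb{E}_{w^i}[\tau_z]$ with weights $\mathbb{P}_y(X_{\tau_{\{w^1,\dots,w^n,z\}}}=w^i)/\mathbb{P}_y(\tau_{\{w^1,\dots,w^n\}}<\tau_z)$ rather than the Gibbs-restricted weights hidden inside $\mathbb{E}_{\{w^1,\dots,w^n\}}[\tau_z]$. The symmetry provided by Condition~\ref{t:series00}, together with the sharp estimate \eqref{valattsing}, is what makes these two weightings coincide to leading order in $\beta$. I would spell this equivalence out carefully and highlight that it is the only place where the equal-energy hypothesis enters. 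Once it is justified, the lemma follows directly, and it can then be combined with Theorem~\ref{t:addition01} and Condition~\ref{t:series01} (which makes $\mathbb{P}_{x_2}(\tau_{\{w^1,\dots,w^n\}}<\tau_{x_0})\to 1$) to obtain the addition formula \eqref{addition02} in Theorem~\ref{t:addition02}.
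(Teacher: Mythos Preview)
Your approach---decompose $\tau_z$ at the first visit to $\{w^1,\dots,w^n,z\}$ and apply the strong Markov property---is exactly the paper's. The paper writes $\mathbb{E}_y[\tau_z\mathbf{1}_{\{\tau_{\{w^1,\dots,w^n\}}<\tau_z\}}]=\mathbb{E}_y[\mathbf{1}_{\{\tau_{\{w^1,\dots,w^n\}}<\tau_z\}}(\tau_{\{w^1,\dots,w^n\}}+\mathbb{E}_{\{w^1,\dots,w^n\}}[\tau_z])]$ and then pulls $\mathbb{E}_{\{w^1,\dots,w^n\}}[\tau_z]$ out of the outer expectation as a deterministic constant, combining the remaining pieces via $\tau_{\{w^1,\dots,w^n\}}\mathbf{1}_{\{\tau_{\{w^1,\dots,w^n\}}<\tau_z\}}+\tau_z\mathbf{1}_{\{\tau_{\{w^1,\dots,w^n\}}\ge\tau_z\}}=\tau_{\{w^1,\dots,w^n,z\}}$. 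So at the level of the decomposition and the strong Markov step the two arguments coincide.

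The difference is in how the quantity $\mathbb{E}_{\{w^1,\dots,w^n\}}[\tau_z]$ is handled. The paper treats it as a single number and states the identity as exact for arbitrary $y,w^1,\dots,w^n,z$, without commenting on the fact that the strong Markov property actually produces the random variable $\mathbb{E}_{X_{\tau_{\{w^1,\dots,w^n\}}}}[\tau_z]$, which depends on which $w^i$ is hit. You make this dependence explicit and then invoke Condition~\ref{t:series00} and \eqref{valattsing} to argue that all the $\mathbb{E}_{w^i}[\tau_z]$ agree to leading order, obtaining the identity only up to a factor $[1+o(1)]$. Your version is more careful on this point; the paper's exact statement is really only justified under the tacit assumption that $\mathbb{E}_{w^i}[\tau_z]$ does not depend on $i$. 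For the intended application in Theorem~\ref{t:addition02} this makes no difference, since the conclusion there already carries a $[1+o(1)]$.
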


\medskip
\par\noindent
\textit{Proof.\/}
First of all we note that
\begin{eqnarray*}
\mathbb{E}_y(\tau_z)=\mathbb{E}_y[\tau_z\mathbf{1}_{\{\tau_{w^1,...,w^n}\}<\tau_z}]+\mathbb{E}_y[\tau_z\mathbf{1}_{\tau_{\{w^1,...,w^n\}}\geq\tau_z}].
\end{eqnarray*}
We now rewrite the first term as follows
\begin{align*}
 \mathbb{E}_y[\tau_z\mathbf{1}_{\{\tau_{\{w^1,...,w^n\}}<\tau_z\}}]&=\mathbb{E}_y[ \mathbb{E}_y[\tau_z\mathbf{1}_{\{\tau_{\{w^1,...,w^n\}}<\tau_z\}}|\mathcal{F}_{\tau_{\{w^1,...,w^n\}}}]
] \\
&=\mathbb{E}_y[\mathbf{1}_{\{\tau_{\{w^1,...,w^n\}}<\tau_z\}}(\tau_{\{w^1,...,w^n\}}+\mathbb{E}_{\{w^1,...,w^n\}}[\tau_z])]\\
&= \mathbb{E}_y[\tau_{\{w^1,...,w^n\}}\mathbf{1}_{
\{\tau_{\{w^1,...,w^n\}}<\tau_z\}}]+\mathbb{P}_y(\tau_{\{w^1,...,w^n\}}<\tau_z)\mathbb{E}_{\{w^1,...,w^n\}}[\tau_z],
\end{align*}
 where we have used the fact that $\tau_{\{w^1,...,w^n\}}=\min\{\tau_{w^1},...,\tau_{w^n}\}$ is a stopping time, that $\mathbf{1}_{\{\tau_{\{w^1,...,w^n\}}\}}$ is measurable with respect to the pre--$\tau_{\{w^1,...,w^n\}}$--$\sigma$--algebra $\mathcal{F}_{\tau_{\{w^1,...,w^n\}}}$ and the strong Markov property which gives $\mathbb{E}_y[\tau_z|\mathcal{F}_{\tau_{\{w^1,...,w^n\}}}]=\tau_{\{w^1,...,w^n\}}+
 \mathbb{E}_{\{w^1,...,w^n\}}[\tau_z]$ on the event $\{\tau_{\{w^1,...,w^n\}}\leq \tau_z\}$.
 Since $ (\tau_{\{w^1,...,w^n\}}\mathbf{1}_{\{ \tau_{\{w^1,...,w^n\}}<\tau_z \}}+\tau_z\mathbf{1}_{\{ \tau_{\{w^1,...,w^n\}}\geq\tau_z \}})=\tau_{\{w^1,...,w^n,z\}}$, \eqref{dimo00} follows. \\
\qed

\medskip
\par\noindent
\textit{Proof of Theorem~\ref{t:addition02}.\/}
By \eqref{dimo00} we have that
\begin{displaymath}
\mathbb{E}_{\metadue}[\tau_{\stab}]
=
\mathbb{E}_{\metadue}[\tau_{\{\metauno^1,..., \metauno^n\stab\}}]
+
\mathbb{E}_{\{\metauno^1,...,\metauno^n\}}[\tau_{\stab}]\mathbb{P}_{\metadue}(\tau_{\{\metauno^1,...,\metauno^n\}}<\tau_{\stab})
\end{displaymath}
 By Theorem~\ref{t:addition01} and Condition~\ref{t:series01} it follows that
$$
{\mathbb{E}_{\metadue}[\tau_{\stab}]}=
     {e^{\beta\Gamma_m}\left(\frac{1}{k_1}+\frac{1}{k_2}\right)}
[1+o(1)]
$$
which concludes the proof.
\qed

\begin{teo}\label{hfun}
Consider the Markov chain defined in Section \ref{gs}. We have that
\begin{equation}
\label{potbound}
 \mathbb{P}_x(\tau_Y<\tau_Z)\leq \frac{\rr{cap}_\beta(x,Y)}{\rr{cap}_\beta(x,Z)}
\end{equation}
 for any $Y=\{y^1,...,y^t\}\subset \mathcal{X}$ for $t \in \mathbb{N}$, $Z=\{z^1,...,z^{t'}\}\subset \mathcal{X}$ for $t' \in \mathbb{N}$, $Y \cap Z=\emptyset$, $x\in \mathcal{X} \setminus \{Y \cup Z\}$.
\end{teo}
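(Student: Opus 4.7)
The plan is to reduce the probability $\mathbb{P}_x(\tau_Y<\tau_Z)$ to a ratio of two excursion probabilities from $x$, each of which can then be compared to a capacity through \eqref{cap-prop}. Concretely, I would consider the three disjoint and exhaustive events arising on the first excursion from $x$: returning to $x$ before hitting $Y\cup Z$, reaching $Y$ first, and reaching $Z$ first. Denote their respective probabilities by $\alpha:=\mathbb{P}_x(\tau_x<\tau_{Y\cup Z})$, $\beta_Y:=\mathbb{P}_x(\tau_Y<\tau_x\wedge\tau_Z)$, and $\beta_Z:=\mathbb{P}_x(\tau_Z<\tau_x\wedge\tau_Y)$. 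Since the chain is irreducible on the finite space $\mathcal{X}$, we have $\alpha+\beta_Y+\beta_Z=1$.

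Conditioning on this first excursion and applying the strong Markov property at the return time $\tau_x$ in the third case, I would derive the identity $\mathbb{P}_x(\tau_Y<\tau_Z)=\beta_Y+\alpha\,\mathbb{P}_x(\tau_Y<\tau_Z)$, and hence
\[
\mathbb{P}_x(\tau_Y<\tau_Z)=\frac{\beta_Y}{\beta_Y+\beta_Z}.
\]
This is the key algebraic step.

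The second half of the plan is to identify $\beta_Y$ and $\beta_Y+\beta_Z$ with capacities via \eqref{cap-prop}. By inclusion of events, $\beta_Y\le\mathbb{P}_x(\tau_Y<\tau_x)=\rr{cap}_\beta(x,Y)/\mu_\beta(x)$, whereas the identity $\{\tau_Y<\tau_x\wedge\tau_Z\}\cup\{\tau_Z<\tau_x\wedge\tau_Y\}=\{\tau_{Y\cup Z}<\tau_x\}$ (a disjoint union) yields $\beta_Y+\beta_Z=\mathbb{P}_x(\tau_{Y\cup Z}<\tau_x)=\rr{cap}_\beta(x,Y\cup Z)/\mu_\beta(x)$. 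Finally, the variational definition \eqref{capac} gives the monotonicity $\rr{cap}_\beta(x,Y\cup Z)\ge\rr{cap}_\beta(x,Z)$, since any admissible test function for the former (vanishing on $Y\cup Z$) is admissible for the latter (vanishing on $Z$). Chaining these bounds produces
\[
\mathbb{P}_x(\tau_Y<\tau_Z)=\frac{\beta_Y}{\beta_Y+\beta_Z}\le\frac{\rr{cap}_\beta(x,Y)}{\rr{cap}_\beta(x,Y\cup Z)}\le\frac{\rr{cap}_\beta(x,Y)}{\rr{cap}_\beta(x,Z)},
\]
which is the claim.

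The only points requiring care are the strong Markov application at the return time $\tau_x$, which is standard in discrete time, and the positivity of $\rr{cap}_\beta(x,Z)$ needed in order to divide; both follow at once from irreducibility on the finite space $\mathcal{X}$.
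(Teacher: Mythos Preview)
Your proof is correct and follows essentially the same renewal argument as the paper: decompose according to whether the first excursion from $x$ returns to $x$ or hits $Y\cup Z$, solve for $\mathbb{P}_x(\tau_Y<\tau_Z)$ as a ratio, then bound numerator and denominator and invoke \eqref{cap-prop}. The only cosmetic difference is that the paper bounds the denominator directly at the probability level via $\mathbb{P}_x(\tau_{Y\cup Z}<\tau_x)\ge \mathbb{P}_x(\tau_Z<\tau_x)$, whereas you pass through the capacity monotonicity $\rr{cap}_\beta(x,Y\cup Z)\ge\rr{cap}_\beta(x,Z)$ from the variational formula; the two are equivalent.
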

 \proof Given $Y,Z \subset \mathcal{X}$ such that $Y \cap Z=\emptyset$ and $x\in \mathcal{X} \setminus \{Y \cup Z\}$, a renewal argument and the strong Markov property yield
\begin{eqnarray*}
 \mathbb{P}_x(\tau_Y<\tau_Z)&=&\mathbb{P}_x(\tau_Y<\tau_Z, \tau_{Y \cup Z}>\tau_{x})+\mathbb{P}_x(\tau_Y<\tau_Z, \tau_{Y \cup Z}<\tau_{x})\\
 &=&\mathbb{P}_x(\tau_Y<\tau_Z| \tau_{Y \cup Z}>\tau_{x})\mathbb{P}_x(\tau_{Y \cup Z}> \tau_{x})\\
&+&\mathbb{P}_x(\tau_Y<\tau_Z, \tau_{Y \cup Z}<\tau_{x})\\
 &=&\mathbb{P}_x(\tau_Y<\tau_Z)\mathbb{P}_x(\tau_{Y \cup Z}> \tau_{x})+\mathbb{P}_x(\tau_Y<\tau_Z, \tau_Y<\tau_{x})\\
 &=&\mathbb{P}_x(\tau_Y<\tau_Z)\mathbb{P}_x(\tau_{Y \cup Z}> \tau_{x})+\mathbb{P}_x(\tau_Y<\tau_{Z \cup\{x\}}).
\end{eqnarray*}
 Therefore
\begin{displaymath}
\mathbb{P}_x(\tau_Y<\tau_Z)
\!=\!
\frac{\mathbb{P}_x(\tau_Y<\tau_{Z \cup \{x\}})}
{1-\mathbb{P}_x(\tau_{Y \cup Z}>\tau_{x})}=
\frac{\mathbb{P}_x(\tau_Y<\tau_{Z \cup \{x\}})}
{\mathbb{P}_x(\tau_{Y \cup Z} \leq \tau_{x})}
\leq \frac{\mathbb{P}_x(\tau_Y<\tau_{x})}
{\mathbb{P}_x(\tau_Z <\tau_{x})}.
\end{displaymath}
 Recalling \eqref{cap-prop}, we can rewrite the ratio in terms of ratio of capacities:
\begin{displaymath}
\label{cri02:023}
\frac{\mathbb{P}_x(\tau_Y<\tau_{x})}
 {\mathbb{P}_x(\tau_Z <\tau_{x})}=\frac{\rr{cap}_\beta(x,Y)}{\rr{cap}_\beta(x,Z)}.
\end{displaymath}
Hence, we get Equation \eqref{potbound}.
\begin{teo}{\cite[Lemma 3.1.1]{bovier2006sharp}}
\label{t:apriori}
Consider the Markov chain defined in Section \ref{gs}.
 For every not empty disjoint sets $Y,Z\subset X$ there exist constants $0<C_1<C_2<\infty$ such that
\begin{equation}
\label{r:apriori}
C_1\le e^{\beta\Phi(Y,Z)}\, Z_\beta\, \rr{cap}_\beta(Y,Z)\le C_2,
\end{equation}
for all $\beta$ large enough.
\end{teo}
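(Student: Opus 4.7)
The plan is to bound the capacity from above using the direct variational principle \eqref{capac} with a carefully chosen test function, and from below using a flow (Thomson--Dirichlet) argument. The key starting point is that, combining \eqref{detailedbalance}, \eqref{GBMgeneral}, \eqref{Hamgeneral}, \eqref{Deltageneral} and the definition \eqref{transitionenergy} of the transition energy, one obtains the uniform pointwise estimate
\begin{equation*}
\mu_\beta(x)\,p_\beta(x,y) = Z_\beta^{-1}\,e^{-\beta H(x,y)}[1+o(1)], \qquad x,y\in\mathcal{X},
\end{equation*}
where $Z_\beta:=\sum_{x\in\mathcal{X}} e^{-\beta G(x)}$ and the $o(1)$ is uniform in $x,y$ because $\mathcal{X}$ is finite.

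For the upper bound, introduce the cut set $A := \{x\in\mathcal{X} : \Phi(x,Y) < \Phi(Y,Z)\}\cup Y$. Then $Y\subseteq A$ and $Z\cap A=\emptyset$, so $f^\star:=\mathbf{1}_A$ is admissible in \eqref{capac}. First I would observe that for every edge $(x,y)$ with $x\in A$, $y\notin A$ one has $H(x,y)\ge\Phi(Y,Z)$: indeed, by the reversibility identity \eqref{proprev} we have $H(x,y)=H(y,x)$, so if $H(x,y)<\Phi(Y,Z)$ one could concatenate the edge $(y,x)$ with an optimal path from $x$ to $Y$ of height $<\Phi(Y,Z)$, obtaining a path from $y$ to $Y$ of height $<\Phi(Y,Z)$ and contradicting $y\notin A$. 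Hence
\begin{equation*}
\rr{cap}_\beta(Y,Z) \le \mathscr{D}_\beta[f^\star] = \sum_{\substack{x\in A\\ y\notin A}} \mu_\beta(x)\,p_\beta(x,y) \le |\mathcal{X}|^2\, Z_\beta^{-1}\,e^{-\beta\Phi(Y,Z)}[1+o(1)],
\end{equation*}
which yields the right-hand side of \eqref{r:apriori} with, e.g., $C_2:=2|\mathcal{X}|^2$ for $\beta$ large enough.

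For the lower bound, I would invoke Thomson's principle, which for our reversible chain gives $\rr{cap}_\beta(Y,Z)^{-1} = \min_\phi \sum_{(x,y)} \phi(x,y)^2/(\mu_\beta(x)p_\beta(x,y))$, the minimum running over unit flows $\phi$ from $Y$ to $Z$. Pick a minimising path $\omega^* = (\omega_1,\ldots,\omega_n)\in\Theta(y^*,z^*)$ with $y^*\in Y$, $z^*\in Z$ and $\Phi_{\omega^*}=\Phi(Y,Z)$, and let $\phi^*$ carry unit flow along $\omega^*$ and zero elsewhere. Then
\begin{equation*}
\rr{cap}_\beta(Y,Z)^{-1} \le \sum_{i=1}^{n-1}\frac{1}{\mu_\beta(\omega_i)\,p_\beta(\omega_i,\omega_{i+1})} \le |\mathcal{X}|\, Z_\beta\, e^{\beta\Phi(Y,Z)}[1+o(1)],
\end{equation*}
which gives the left-hand side of \eqref{r:apriori} with, e.g., $C_1:=(2|\mathcal{X}|)^{-1}$ for $\beta$ large enough.

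There is no serious obstacle; the main technical point is simply bookkeeping of the subexponential corrections. One has to check that the $[1+o(1)]$ factors in \eqref{Deltageneral} and in the convergence $G\to H$ can be absorbed into the uniform constants $0<C_1<C_2<\infty$ for $\beta$ sufficiently large. Since $\mathcal{X}$ is finite, the number of states and the number of edges are bounded, the multiplicative corrections are uniformly controlled, and this step is routine once the inequalities above are in place.
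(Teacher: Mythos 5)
The paper does not prove this proposition at all: it is quoted verbatim from \cite[Lemma 3.1.1]{bovier2006sharp}, so there is no in-house argument to compare against. Your route (Dirichlet upper bound with the indicator of the cut set $A=\{x:\Phi(x,Y)<\Phi(Y,Z)\}\cup Y$, plus a unit flow along a single optimal path for the lower bound) is exactly the standard proof behind the cited lemma, and the combinatorial/geometric part is sound: the symmetry $H(x,y)=H(y,x)$ from \eqref{proprev}, the admissibility of $\mathbf{1}_A$, the edge inequality $H(x,y)\ge\Phi(Y,Z)$ across the cut, and the path bound (after loop-erasing so the path has length at most $|\mathcal{X}|$) are all correct.

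The genuine gap is the very first display, $\mu_\beta(x)\,\mathcal{P}_\beta(x,y)=Z_\beta^{-1}e^{-\beta H(x,y)}[1+o(1)]$, and the closing claim that the corrections ``can be absorbed into the constants because $\mathcal{X}$ is finite.'' In the general setup of Section \ref{gs} the hypotheses are only the logarithmic limits \eqref{Deltageneral1} and \eqref{Hamgeneral}: they give $\mathcal{P}_\beta(x,y)=e^{-\beta[\Delta(x,y)\pm\gamma]}$ and $G(x)=H(x)+o(1)$, hence $\mu_\beta(x)\mathcal{P}_\beta(x,y)=Z_\beta^{-1}e^{-\beta H(x,y)}\,e^{\pm\beta\gamma'}$ with $\gamma'=\gamma'(\beta)\to0$. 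Finiteness of $\mathcal{X}$ makes these corrections uniform over pairs $(x,y)$, but it does not make $e^{\beta\gamma'(\beta)}$ bounded in $\beta$ (take $\gamma'=\beta^{-1/2}$); so from the stated assumptions alone one can only conclude $e^{-\beta\gamma_1}\lesssim e^{\beta\Phi(Y,Z)}Z_\beta\,\rr{cap}_\beta(Y,Z)\lesssim e^{\beta\gamma_2}$ with $\gamma_1,\gamma_2\to0$ — which is precisely the form of correction the paper itself keeps in Theorem \ref{GMM}, and not the $\beta$-independent constants $C_1,C_2$ asserted in \eqref{r:apriori}. To get genuine constants you need the additional (model-dependent) property that the subexponential prefactors are bounded uniformly in $\beta$, i.e.\ $\sup_\beta\beta|G(x)-H(x)|<\infty$ and $c\,e^{-\beta\Delta(x,y)}\le\mathcal{P}_\beta(x,y)\le C\,e^{-\beta\Delta(x,y)}$ on admissible transitions. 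This holds for the Metropolis dynamics of the cited reference and for the PCA of Section \ref{model} (there $\beta|G-H|\le|\Lambda|\log 2$ and the local probabilities \eqref{eqprob} have prefactors in $[2^{-|\Lambda|},1]$), so your argument is complete once this hypothesis is made explicit; as written for the abstract chain of Section \ref{gs}, the absorption step does not follow.
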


\bibliographystyle{abbrv}
\bibliography{BJNArchive2}
\end{document}